\title[Asymptotics of scalar waves]{Asymptotics of scalar waves on
  long-range asymptotically Minkowski spaces}
\author{Dean Baskin}
\address{Department of Mathematics, Texas A\&M University}
\email{dbaskin@math.tamu.edu}
\author{Andr{\'a}s Vasy}
\address{Department of Mathematics, Stanford University}
\email{andras@math.stanford.edu}
\author{Jared Wunsch}
\address{Department of Mathematics, Northwestern University}
\email{jwunsch@math.northwestern.edu}
\newtheorem{theorem}{Theorem}[section]
\newtheorem{lemma}[theorem]{Lemma}
\newtheorem{proposition}[theorem]{Proposition}
\newtheorem*{maintheorem}{Theorem \ref{mainthm}}
\numberwithin{equation}{section}
\newtheorem{non-theorem}[theorem]{Non-Theorem}
\theoremstyle{remark}
\newtheorem{definition}[theorem]{Definition}
\newtheorem{remark}[theorem]{Remark}
\newcommand\scl{{\mathrm{sc}}}
\newcommand\bl{{\mathrm{b}}}
\newcommand{\hamvf}{\mathsf{H}}
\newcommand{\B}{\mathfrak{B}}
\newcommand{\E}{\mathcal{E}}
\newcommand{\Etot}{\mathcal{E}_{\text{tot}}}
\newcommand{\cM}{\mathcal{M}}
\newcommand{\cI}{\mathcal{I}}
\newcommand{\sH}{\hamvf}
\newcommand{\cR}{\mathcal{R}}
\newcommand{\cX}{\mathcal{X}}
\newcommand{\cY}{\mathcal{Y}}
\newcommand{\CI}{\mathcal{C}^\infty}
\newcommand{\dCI}{\dot{\mathcal{C}}^\infty}
\newcommand{\CmI}{\mathcal{C}^{-\infty}}
\newcommand{\tow}{\mathrm{ftr}}
\newcommand{\away}{\mathrm{past}}
\newcommand{\Mlog}{\mathrm{M}}
\newcommand{\badmod}{\mathcal{N}}
\newcommand{\bdflog}{\varrho}
\newcommand{\rholog}{\bdflog}
\newcommand{\vlog}{\mathrm{v}}
\newcommand{\ylog}{\mathrm{y}}
\newcommand{\CIlog}{\CI_{\log}}
\newcommand{\cMlog}{\mathcal{M}_{\log}}
\newcommand{\cIlog}{\mathcal{I}_{\log}}
\newcommand{\Diff}{\operatorname{Diff}}
\newcommand{\Diffblogbad}{\widetilde{\operatorname{Diff}}_{\bl,\log}}
\newcommand{\Diffb}{\operatorname{Diff}_{\bl}}
\newcommand{\Diffblog}{\operatorname{Diff}_{\bl, \log}}
\newcommand{\cMdiff}{\cM_{\mathrm{D}}}
\newcommand{\cMdifflog}{\cM_{\mathrm{D}, \log}}
\newcommand{\Olog}{O_{\log}}
\newcommand{\Rlog}{\mathrm{R}}
\newcommand{\taut}{\iota}
\newcommand{\eu}{\overline{\cup}}
\newcommand{\phg}[1]{\mathcal{A}^{#1}_{\text{phg}}}
\newcommand{\rad}{\mathbf{R}}
\newcommand{\err}{\mathcal{O}}
\newcommand{\vv}{v_0}
\newcommand{\ggamma}{\gamma_0}
\newcommand{\omegabar}{\overline{\omega}}
\newcommand{\alphabar}{\overline{\alpha}}
\newcommand{\betabar}{\overline{\beta}}
\newcommand{\scri}{\mathscr{I}}
\newcommand{\mass}{m}
\newcommand{\resset}{\mathcal{E}_{\mathrm{res}}}
\newcommand{\masslessres}{\mathcal{E}_{\mathrm{res}}^0}
\newcommand{\ressetinit}{\mathcal{E}_0}
\newcommand{\smoothset}{\mathcal{E}_{\mathrm{smooth}}}
\newcommand{\logset}{\mathcal{E}_{\log}}
\newcommand{\logsmoothset}{\mathcal{E}_{\CIlog}}
\newcommand{\Tscstar}{{}^{\scl}T^*}
\newcommand{\Tbstar}{{}^{b}T^*}
\newcommand\Tsc{{}^{\scl} T}
\newcommand\Tb{{}^{\bl} T}
\newcommand\Sb{{}^{\bl} S}
\newcommand\Vf{\mathcal{V}}
\newcommand\Vb{\mathcal{V}_\bl}
\newcommand\Vsc{\mathcal{V}_\scl}
\newcommand{\sigmab}{\sigma_\bl}
\newcommand{\sigmasc}{\sigma_{\scl}}
\newcommand{\Diffsc}{\Diff_{\scl}}
\newcommand{\Psib}{\Psi_\bl}
\newcommand{\Hb}{H_\bl}
\newcommand{\WFb}{\operatorname{WF}_\bl}
\newcommand{\xisc}{\xi_{\scl}}
\newcommand{\gammasc}{\gamma_{\scl}}
\newcommand{\etasc}{\eta_{\scl}}
\newcommand{\abs}[1]{{\left\lvert{#1}\right\rvert}}
\newcommand{\smallabs}[1]{{\lvert{#1}\rvert}}
\newcommand{\norm}[1]{{\left\lVert{#1}\right\rVert}}
\newcommand{\Norm}[2][]{\left\|{#2}\right\|_{#1}}
\newcommand{\smallang}[1]{{\langle{#1}\rangle}}
\renewcommand{\Im}{\operatorname{Im}}
\renewcommand{\Re}{\operatorname{Re}}
\newcommand{\im}{\operatorname{Im}}
\newcommand{\supp}{\operatorname{supp}}
\newcommand{\Op}{\operatorname{Op}}
\newcommand{\pa}{{\partial}}
\newcommand{\pd}[1][]{\partial_{#1}}
\newcommand{\ep}{{\epsilon}}
\DeclareMathOperator{\Ann}{Ann}
\newcommand{\RR}{\mathbb{R}}
\newcommand{\reals}{\RR}
\newcommand{\NN}{\mathbb{N}}
\newcommand{\CC}{\mathbb{C}}
\newcommand{\Mellin}{\mathcal{M}} 
\newcommand{\hol}{\mathcal{H}}
\thanks{The authors acknowledge partial support from NSF grants
  DMS-1500646 (DB), DMS-1361432 (AV) and DMS-1001463 (JW), and the
  support of NSF Postdoctoral Fellowship DMS-1103436 (DB).  The
  authors gratefully acknowledge the hospitality of the Erwin
  Schr\"odinger Institute program ``Modern Theory of Wave Equations,''
  at which some of this work was carried out in summer 2015.  The
  first and third author also thank the Institut Henri Poincar\'e for
  support through its ``Research in Paris'' program in February 2016.}
\date{January 8, 2018}
\subjclass[2000]{Primary 35L05; Secondary 35P25, 58J45}
\begin{document}

\begin{abstract}
  We show the existence of the full compound asymptotics of solutions
  to the scalar wave equation on long-range non-trapping Lorentzian
  manifolds modeled on the radial compactification of Minkowski space.
  In particular, we show that there is a joint asymptotic expansion at
  null and timelike infinity for forward solutions of the
  inhomogeneous equation.  In two appendices we show how these results
  apply to certain spacetimes whose null infinity is modeled on that
  of the Kerr family.  In these cases the leading order logarithmic term
  in our asymptotic expansions at null infinity is shown to be nonzero.
\end{abstract}

\maketitle

\section{Introduction}
\label{sec:introduction}

In this paper we analyze the full compound asymptotics of solutions to
the scalar wave equation on long-range non-trapping Lorentzian
scattering manifolds.  This class of Lorentzian scattering manifolds,
introduced in~\cite{radfielddecay}, includes short-range perturbations
of the Minkowski spacetimes as well as a broad class of rather
different spacetimes that admit a compactification analogous to the
spherical compactification of Minkowski space.  In this paper we
extend these results to the more physically meaningful setting of
\emph{long-range} perturbations of gravitational type: this entails
adding a term to our metric that involves a constant Bondi mass.
We analyze the compound asymptotics of scalar waves
near the boundary at infinity.  The most interesting region for this
expansion is near the boundary of the light cone, where we obtain a
full understanding of the asymptotics via an appropriately scaled
blow-up; the \emph{front face} of this blow-up, i.e., the new boundary
face obtained by introduction of polar coordinates, is $\scri^+$, the
null infinity of our spacetime.  We analyze the Friedlander radiation
field, which is given by the restriction of the rescaled solution to
$\scri^+$; in particular we find as in~\cite{radfielddecay} that the
asymptotics of the radiation field in the ``time-delay'' parameter
(given by $s=2(t-r)$ in Minkowski space and subtler here owing to
long-range effects) are determined by the resonance poles of an
associated Laplace-like operator for an asymptotically hyperbolic
metric on the ``cap'' in the sphere at infinity reached by forward
limits of time-like geodesics.  Among the main differences of the
construction here and that used for the short-range case in
\cite{radfielddecay} is the necessity of a change of $\CI$ structure
on the compactified spacetime, prior to the radiation field blow-up,
in order to construct the correct $\scri^+.$

In particular, in the following theorem, the variable $s$ is analogous
to the ``lapse function'' $2(t-r)$ in Euclidean space; in the long-range
case it is given 
instead by
\begin{equation}
  \label{eq:s-coord}
  s=2(t-r)+m\log r^{-1};
\end{equation}
here the logarithmic correction
has a coefficient, denoted $m,$ related to the long-range asymptotics
permitted in our spacetimes.  The geometric hypotheses of the theorem
are spelled out in detail in Section~\ref{sec:long-range-scatt} below,
and indeed we will restate the theorem in a more precise fashion
in Section~\ref{sec:radiation-field-blow}.
\begin{theorem}\label{mainthm}
Let $(M,g)$ be a non-trapping Lorentzian scattering manifold, and let 
$$
\Box_g u=f
$$
with $u \in \CmI(M),$ $f \in \dCI(M).$  Assume that $u$ is a forward solution.
Then $u$ has a joint polyhomogeneous asymptotic expansion in
$s\to\infty,\ r \to \infty$ (where $r$ and $s$ are as in equation~\eqref{eq:s-coord}
\begin{equation}\label{asymptotic.1}
u \sim r^{-(n-2)/2}\sum_j \sum_{\kappa \leq m_j} \sum_{\ell=0}^\infty
\sum_{\alpha\leq 2 \ell} a_{j\kappa\ell\alpha} s^{-\imath \sigma_j}
(\log s)^{\kappa} (s/r)^\ell (\log(s/r))^\alpha.
\end{equation}
If $m=0$ then only $\alpha=0$ terms appear.
\end{theorem}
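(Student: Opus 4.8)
The plan is to work on the radial compactification $M$ of the Lorentzian scattering manifold and analyze $\Box_g$ as an element of the scattering (or b-) calculus near the various boundary faces, then perform a sequence of blow-ups that resolves the structure of the asymptotics. First I would set up the geometry: the long-range term $m\log r^{-1}$ forces a change of the $\CI$ structure on the compactification, introducing the $\CIlog$ structure so that $s=2(t-r)+m\log r^{-1}$ becomes a smooth boundary coordinate; this is the source of the $\log(s/r)$ factors. On this modified manifold I would blow up the corner where null infinity meets the sphere at infinity and then perform the radiation-field blow-up whose front face is $\scri^+$. The operator $\Box_g$, conjugated by $r^{(n-2)/2}$ and rescaled appropriately, becomes (up to lower-order terms) a b-operator on the blown-up space whose normal operator at $\scri^+$ is, after a Mellin transform in $s$, the spectral family of an asymptotically hyperbolic Laplacian on the ``cap.''

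Next I would run the standard iterative/normal-operator argument: since $u$ is a forward solution with $f\in\dCI(M)$, microlocal propagation of singularities in the scattering calculus (non-trapping hypothesis) gives that $u$ is conormal — polyhomogeneous with no singularities away from the boundary faces — and smooth (in the appropriate sense, i.e., vanishing to infinite order) at the faces not reached by the forward flow. The expansion at $\scri^+$ in the variable $s$ is obtained by inverting the normal operator: its inverse is meromorphic in the Mellin dual variable $\sigma$, with poles exactly at the resonances $\sigma_j$ of the asymptotically hyperbolic problem, of order $m_j+1$, producing the terms $s^{-\imath\sigma_j}(\log s)^\kappa$. Then one peels off successive terms in powers of $s/r$ by solving away the error at each order — at each step the model operator is the same (up to a shift), so the same resonance set governs the coefficients, and the iteration produces the full double sum over $\ell$ and $j,\kappa$. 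Finally I would assemble these local expansions via Borel summation / an asymptotic-summation argument to produce the claimed joint polyhomogeneous expansion, and verify convergence/remainder estimates using the non-trapping propagation estimates to control the error terms uniformly.

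The appearance of the $(\log(s/r))^\alpha$ factors, with $\alpha$ up to $2\ell$, is the genuinely long-range phenomenon and deserves a word: when $m\neq 0$ the change of $\CI$ structure means that the coefficients in the $s/r$ expansion are not smooth but only polyhomogeneous with logarithmic terms, and each order in $s/r$ can contribute up to two extra powers of $\log(s/r)$ relative to the previous order (one from the $\log r^{-1}$ in $s$ itself entering the normal operator, one from the iteration). Keeping careful track of this log-accumulation — i.e., proving that the index family at $\scri^+$ grows at most linearly in $\ell$ and that $\alpha\le 2\ell$ is sharp bookkeeping rather than a loss — is where the argument requires the most care. When $m=0$ the $\CI$ structure is unchanged, the coefficients are genuinely smooth in $s/r$, and no $\log(s/r)$ terms arise, which gives the final sentence of the theorem.

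I expect the main obstacle to be the analysis at the corner where $\scri^+$ meets the remaining boundary faces — in particular controlling the interaction between the b-normal operator at $\scri^+$ (governed by the asymptotically hyperbolic resonances) and the behavior at future timelike infinity, and showing that the formal series solution actually resums to a polyhomogeneous function with the stated index sets. This is precisely the step where the non-trapping hypothesis and the detailed structure of the conjugated operator from Section~\ref{sec:long-range-scatt} must be used, rather than soft arguments.
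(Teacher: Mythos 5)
Your set-up (compactification, conjugation to a b-operator, logification of the smooth structure, radiation-field blow-up, resonances of an asymptotically hyperbolic family producing the $s^{-\imath\sigma_j}(\log s)^\kappa$ terms) matches the paper's skeleton, but the core of the argument --- how the joint expansion at the two faces meeting at the corner is actually proved --- is either missing or replaced by steps that do not work as stated. First, you have the two faces switched: the resonances $\sigma_j$ are poles of the inverse of the reduced normal operator $P_\sigma=\widehat N(L)(\sigma)$ obtained by Mellin transform in $\rho$ at $\pa M$ (before any blow-up), and they govern the expansion at $C_+$, whose defining function after the blow-up is $\varpi=\rho/v=s^{-1}$; the expansion at $\scri^+$ is the integer-power expansion in $v=s/r$ carrying the $(\log(s/r))^\alpha$ factors. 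There is no useful ``normal operator at $\scri^+$ Mellin transformed in $s$'' whose inverse is the asymptotically hyperbolic spectral family: the model operator at the front face is the transport operator $4\pa_{\vlog}(\rholog\pa_{\rholog}+\vlog\pa_{\vlog})$ of Lemma~\ref{lemma:Lnormop}, which is not Fredholm and is not inverted by a Mellin transform, so your plan to ``peel off successive terms in powers of $s/r$ by solving away the error at each order'' with ``the same model operator'' has no content as written: each such step is an integration in $s$ from $s=-\infty$, and the entire difficulty is controlling the result uniformly as $s\to+\infty$, i.e.\ at the corner with $C_+$ --- exactly the obstacle you defer to the last paragraph without an idea for resolving it. The paper avoids formal series and Borel summation altogether: by the Mazzeo--Melrose characterization (Proposition~\ref{proposition:doublemellin}), joint polyhomogeneity follows from testing \emph{separately} at each face with products of shifted radial vector fields, with uniform weighted control at the other face, so no matching at the corner is ever required.

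Second, two technical inputs you omit are indispensable. (i) The solution is \emph{not} conormal to $\pa M$ at $S_+$: at the radial set one only obtains iterated regularity under the module $\cM$ of operators tangent to $S_+$, subject to the threshold condition $m'+l<1/2$ (Proposition~\ref{prop:radial-b-estimate}, which in the long-range case must additionally handle a nontrivial Jordan block in the linearized Hamilton flow). This module regularity is what later converts decay in $\rho$ into decay in $\varpi=\rho/v$ at $C_+$ (Lemma~\ref{lemma:moduletosobolev}) and what lifts to b-regularity on the blow-up; mere ``conormality from non-trapping propagation'' is false at $S_+$ and insufficient elsewhere. (ii) The bound $\alpha\le 2\ell$ is not produced by the heuristic ``one log from $s$, one from the iteration'': it comes from testing with $\rad_k=\prod_j(R+\imath j)^{2j+1}$ and from the commutator calculus of the shifted radial fields with $\badmod$ and $\cIlog$ (Lemmas~\ref{lemma:log-commutators1}--\ref{lemma:iterative-step-log}), where each factor of $\rholog\log\rholog$ in the logified coefficients costs an extra power of $(R+\imath j)$ per unit of decay gained. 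Without these two ingredients the argument does not close.
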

The slightly eccentric-looking presentation of the terms in the sum is
motivated by the fact that the variables $s^{-1}$ and $(s/r)$ should
be viewed as defining functions of the two faces of a manifold with
corners obtained by the blowup of the light cone at infinity (depicted
on the right side of Figure~\ref{fig:blowup}).  The exponents
$\sigma_j$ have an explicit description as resonance poles of a family
of operators closely related to the spectral family of the Laplacian
on asymptotically hyperbolic space. The radiation field, which is
conventionally defined \cite{Friedlander:1980} by taking the
$s$-derivative of the restriction to $r=\infty$ of $r^{(n-2)/2} u,$ is
thus well defined and enjoys an asymptotic expansion as $s \to \infty$
with terms (given by taking $\ell=\alpha=0$ in \eqref{asymptotic.1})
of the form $s^{-\imath \sigma_j-1} (\log s)^{\kappa}.$ Note that
because $u$ is a forward solution, $u$ is Schwartz for $s\ll 0$ when $\rho$ is
small, hence the regime $s \to +\infty$ is the only interesting
one. 
\begin{figure}[htp]
  \centering
  \includegraphics{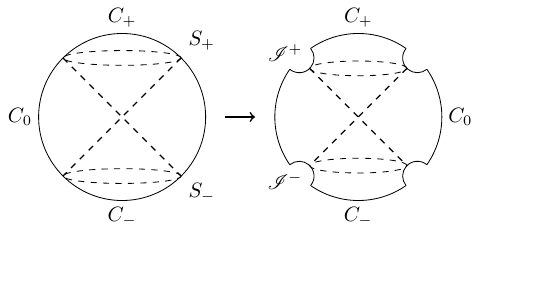}
  \caption{A schematic view of the blow-up.  The lapse function $s$
    increases along $\scri^+$ towards $C_+$.  In the typical Penrose
    diagram of Minkowski space, $C_\pm$ are collapsed to
    $i_\pm$ and $C_0$ is collapsed to $i_0$.}
  \label{fig:blowup}
\end{figure}

This theorem represents an improvement over the results of
\cite{radfielddecay} even in the case $m=0$ (which was all that was
treated in that paper), as the appearance of log terms in the expansion is
considerably clarified.

In practice, the variables in \eqref{asymptotic.1} are not well suited
to the problem, as, for instance, the regime $r, s\to +\infty$ is the
more interesting part of the parameter range.  As in
\cite{radfielddecay} we will in fact view our spacetime as the
interior of a \emph{compact} manifold with boundary $M,$ analogous to
the radial compactification of Minkowski space to the ball $B^{n}.$
We will let $S_\pm$ denote the forward/backward light cones where they
intersect the boundary at infinity, and let $C_\pm$ denote the
interiors of $\pa M$ interior to these light cones, i.e.,  the
future/past timelike infinity, which here is a smooth manifold with
boundary $S_\pm.$
Then $\rho$ denotes a boundary defining function (analogous to
$(1+t^2+r^2)^{-1/2}$) while $v$ will denote a function such that $v=0$
defines $S_+,$ the future light cone at infinity.  Near $S_+$, we can in
fact simply change the $\rho$ variable to $\rho=r^{-1}$ for
simplicity.  Then \emph{in the short-range case}, $s=v/\rho$ and $r =
\rho^{-1}$ are the variables used above, while
$s^{-1}=\rho/v$ and $s/r= v$ will be defining functions for the
boundary faces of the blow-up of $\rho=v=0,$ and the compound
asymptotics \eqref{asymptotic.1} are thus expressed in these
variables.  Note that $\scri^+=\{v=0\}$ while $\rho/v$ tends to $0$ as
we move along $\scri^+$ to forward timelike infinity.  In the
long-range case treated here, these definitions are seriously affected by the
mass parameter, and the necessary changes are addressed
extensively in Section~\ref{sec:logification} below.

\subsection{Notation}
\label{sec:notation}

We use the notation $O(f)$ to denote an element of $f\cdot \CI$ and
the notation $O(f_{1}, \dots, f_{k})$ to denote an element of
$f_{1}\cdot \CI + \dots + f_{k}\cdot \CI$.  We use $\Olog(f)$ and
$\Olog(f_{1},\dots, f_{k})$ similarly (but with $\CIlog$ in place of $\CI$).

Our convention is that the natural numbers include $0:$
$$
\NN \equiv \{0, 1, 2, 3, \dots\}.
$$

\subsection{Sketch of proof}

As the method of proof is in certain respects somewhat round-about, we
sketch it here.  The strategy mimics that developed by the authors for
the short-range case in \cite{radfielddecay} up to a certain point,
where we prove conormality of the solution up to the boundary and to
$S_+.$  The subsequent treatment of the full asymptotic expansion is completely new, however, and much
improves the earlier treatment even in the short-range case.

The main steps in the proof are as follows:
\subsubsection*{Set-up} 

As noted above, we will let $\rho$ be a boundary defining function
for the boundary (at infinity!) of $M$ (e.g., $\rho=r^{-1}$ in the
region of most interest) while $v$ cuts out $S_+,$ the future null
cone (e.g.\ $t/r-1$ in Minkowski space); let $y$ denote the remaining
variables (analogous to $\theta=x/\abs{x} \in S^{n-2}$ when $(t,x) \in
\RR^{1,n-1}$).

We now consider the equation
$$
\Box_g u=f
$$
but then rescale and conjugate\footnote{The conjugation here rescales
  $\Box_{g}$ to a b-operator and makes it formally self-adjoint in the
b-sense.  It also renormalizes the leading asymptotics of solutions of the
wave equation to size $1$.  It is advantageous to work in the b-setting because
the operator is degenerate in the sense of the scattering calculus,
corresponding to
its characteristic set over the boundary being singular at the
zero section, the tip of the light cone in the fibers.} to rewrite it as
$$
L w=g
$$
where
\begin{equation}\label{Ldef}
L\equiv \rho^{-(n-2)/2-2}\Box_g \rho^{(n-2)/2},
\end{equation}
$$
w=\rho^{-(n-2)/2} u\in \CmI(M) ,\quad g=\rho^{-(n-2)/2-2} f \in \dCI(M).
$$
This is advantageous because $L$ is then a ``b-differential operator''
in the sense of Melrose \cite{Melrose:APS}, and enables us to employ
the b-pseudodifferential calculus to obtain microlocal estimates on
$w$ near $X=\pa M.$

\subsubsection*{Propagation of b-regularity}

We first prove a propagation of b-regularity, which is to say
(microlocalized) conormality with respect to the boundary, starting at
the backward null cone, where by hypothesis the solution is trivial (zero near the
boundary), along $X=\pa M.$  This works easily until we reach $S_+,$
where the relevant bicharacteristic flow has radial points, and we
need to use subtler estimates.  The idea is that instead of proving
conormality with respect to $X,$ which is to say, iterated regularity
under $\rho \pa_\rho,$ $\pa_v,$ and $\pa_y,$ we must settle for less:
we only obtain regularity under vector fields that are additionally
tangent to $S_+\subset X$ as well as to $X.$  This is the content of
Proposition~\ref{prop:radial-b-estimate} below.

We also employ a refined version of the b-estimates described above
that have a semiclassical parameter corresponding to the Mellin dual
of the vector field $\rho \pa_\rho.$

\subsubsection*{Fredholm estimates}

The propagation estimates are the necessary ingredient, following the
strategy developed by the second author in
\cite{Vasy-Dyatlov:Microlocal-Kerr}, in showing that we may set up a
global Fredholm problem on $X$ for the family of ``reduced normal operators'' $P_\sigma.$
This is the family of operators given by an appropriate freezing of coefficients at $\rho=0$ and
conjugating $L$ by the Mellin transform in the boundary defining
function (which simply acts on $L$ by replacing the vector field $\rho
D_\rho$ by the parameter $\sigma$ wherever it appears).  Crucially,
$P_\sigma$ is taken to act on spaces with varying degrees of
regularity, with more regularity mandated at the backward (in the
sense of the time orientation) end of the
flow lines than at the forward end.   The
non-trapping hypothesis is used here in a crucial way to show that
the family $P_{\sigma}^{-1}$ (which we show always exists as a
meromorphic operator family) may moreover only have finitely many poles
in a given horizontal strip in $\CC,$ and satisfies polynomial
estimates as $\smallabs{\Re \sigma} \to \infty.$

\subsubsection*{Global asymptotic expansion}

We then begin the asymptotic development of $w$ (and hence $u$) near
the boundary as follows.  Cutting off near $\pa M$ and Mellin
transforming, the equation
$$
Lw=g
$$
becomes a family of equations of the form
$$
P_\sigma \tilde w=\tilde g.
$$
A priori, all we know is that $\tilde w$ is analytic in a half-space
$\Im \sigma \geq \varsigma_0.$ However we may invert $P_\sigma$ to
obtain \emph{meromorphy} of $\tilde w,$ with poles arising from the
poles of $P_{\sigma}^{-1}.$ If $L$ were in fact
dilation-invariant near $\pa M,$ we would immediately have global
meromorphy, but as error terms need to be dealt with at each stage, we
are only able to improve our domain of meromorphy a little at a time,
increasing the half-space in which we know meromorphy by finite
increments in an iterative argument. This iteration does eventually
yield global meromorphy, but with the subtlety that poles may arise
not merely from the poles of $P_\sigma^{-1}$ itself but also from
their shifts by $\imath j$ for $j \in \NN.$

Applying the inverse Mellin transform has the effect of turning poles of $\tilde
w$ into terms in an asymptotic expansion, with a pole at $\sigma=z$ of
degree $k$ becoming a term $\rho^{\imath z} \log \rho^{k-1}.$  The
coefficients of this expansion, however, are functions on $X=\pa M$
that \emph{become worse} in their regularity at $S_+$ as $\Im z$
decreases, i.e.\ as we obtain more decay in $\rho.$    This
development thus suffices
to get an asymptotic expansion valid as $\rho\downarrow 0$ for $v \neq
0,$ and indeed to get this expansion uniformly as $\rho/v \downarrow 0$
near $v=0,$ which is in effect one of the two asymptotic expansions at
intersecting boundary faces in \eqref{asymptotic.1} (where $s^{-1}=\rho/v$
and $v$ should be regarded as the defining functions for two
intersecting boundary faces of a manifold with corners, where we seek
a joint asymptotic expansion).

\subsubsection*{Full asymptotics}

It thus remains to obtain the full expansion \eqref{asymptotic.1}, in
both variables.  It will suffice, via an argument discussed in
Section~\ref{sec:basics-b-geometry}, to show that in the short-range case $w$ has improved
asymptotics under applications of vector fields of the form
$$
(R+\imath j)\dots (R+\imath ) (R)
$$
where $R=vD_v+\rho D_\rho$ is the scaling vector field about $S_+.$
In the long-range case, we must employ instead
$$
(R+\imath j)^{2j+1}\dots (R+\imath )^3 (R),
$$
with the increased multiplicity corresponding to the additional log
terms appearing in the long-range case.  These extra difficulties
arise because of a change of coordinates necessary to obtain good
estimates at $S_+$: if we (locally) replace the variable $v$ that defines $S_+$
with $v+m \rho \log \rho,$ this change of variables has the
considerable virtue of making the leading order form of $L$ near $S_+$
the same in the long- and short-range cases, but also the considerable
defect of introducing additional log singularities into the
coefficients of the remaining terms in $L.$ It is these additional
error terms that are responsible for the additional log singularities
in our expansion.  This change of variables and its consequences (in
particular, a change of $\CI$ structure on the manifold $M$) are
discussed in Section~\ref{sec:logification} below.

\section{Basics of b- and scattering-geometry}
\label{sec:basics-b-geometry}

\subsection{b-geometry}\label{subsec:b-geometry}
The main microlocal tool that we employ is the
\emph{b-pseudodifferential calculus} of Melrose, together with
refinements involving conormal regularity at submanifolds.  We
therefore begin by recalling notation and basic results about these objects.

Accordingly, the following preliminaries are essentially taken from \cite{radfielddecay}.
For a more thorough discussion of
b-pseudodifferential operators and b-geometry, we refer the reader to
Chapter 4 of Melrose~\cite{Melrose:APS}. 

In this section and the following, we initially take $M$ to be a
  manifold with boundary with coordinates $(\rho,y)\in [0,1)\times
  \RR^{n-1}$ yielding a product decomposition $M \supset U \sim [0,1)
  \times \pa M$ of a collar neighborhood of $\pa M.$ In particular,
  for now we lump the $v$ variable in with the other boundary
  variables as it will not play a distinguished role.

The space of \emph{b-vector fields}, denoted $\mathcal{V}_{b}(M),$ is the
vector space of vector fields on $M$
tangent to $\pd M$.  In local coordinates $(\rho, y)$ near $\pd M$,
they are spanned over $C^{\infty}(M)$ by the vector fields
$\rho\pd[\rho]$ and $\pd[y]$. We note that $\rho\pd[\rho]$ is
well-defined, independent of choices of coordinates, modulo $\rho \mathcal{V}_{b}(M)$; one may call this the
{\em b-normal vector field} to the boundary. One easily verifies that $\mathcal{V}_{b}(M)$ forms a Lie
algebra. The set of b-differential operators, $\Diffb^{*}(M)$, is the
universal enveloping algebra of this Lie algebra:
it is the filtered algebra consisting of operators of the form
\begin{equation}\label{exampleboperator}
A=\sum_{\smallabs{\alpha}+j\leq m} a_{j,\alpha}(\rho,y) (\rho D_\rho)^j
D_y^\alpha \in \Diffb^m(M)
\end{equation}
 (locally near $\pa M$) with the coefficients $a_{j,\alpha} \in \CI(M).$
We further define a bi-filtered algebra by setting
$$
\Diff_{\bl}^{m,l}(M)\equiv\rho^{-l} \Diff_{\bl}^m(M).
$$
The first index (here $m$) is the order of the operator and the second
(here $l$) is the weight.

The
b-pseudodifferential operators $\Psib^{*}(M)$ are the ``quantization''
of this Lie algebra, formally consisting of operators of the form
$$
b(\rho,y,\rho D_\rho, D_y)
$$
with $b(\rho,y,\xi,\eta)$ a Kohn--Nirenberg symbol (i.e., a symbol
smooth in all variables with an asymptotic expansion in decreasing
powers of $(\xi^{2} + |\eta|^{2})^{1/2}$); likewise we let
$$
\Psib^{m,l}(M)=\rho^{-l}\Psib^m(M)
$$
and obtain a bi-graded algebra.

The space $\mathcal{V}_{b}(M)$ is in fact the space of sections of a
smooth vector bundle over $M,$ the \emph{b-tangent bundle}, denoted
$\Tb M.$ The sections of this bundle are of course locally spanned by
the vector fields $\rho\pa_\rho,\pa_y.$ The dual bundle to $\Tb M$ is
denoted $\Tbstar M$ and has sections locally spanned over $\CI(M)$ by
the one-forms $d\rho/\rho, dy.$ We also employ the \emph{fiber
  compactification} $\overline{\Tb^{*}}M$ of $\Tbstar M$, in which we
radially compactify each fiber.  If we let
$$
\xi \frac{d\rho}{\rho}+ \eta \cdot dy
$$
denote the canonical one-form on $\Tbstar M$ then
a defining function for the boundary ``at
infinity'' of the fiber-compactification is
$$
\nu=(\xi^2+\abs{\eta}^2)^{-1/2};
$$
a (redundant) set of local coordinates on each
fiber of the compactification near $\{v= \rho = 0\}$ is given by
\begin{equation*}
  \nu,\ \hat{\xi} = \nu \xi,\  \hat{\eta} = \nu \eta.
\end{equation*}

The symbols of operators in $\Psib^*(M)$ are thus Kohn-Nirenberg
symbols defined on $\Tbstar M.$ The principal symbol map, denoted
$\sigma_{\bl},$ maps (the classical subalgebra of)
$\Psib^{m,l}(M)$ to $\rho^{-l}$ times
homogeneous functions of order $m$ on $\Tbstar M.$ In the particular
case of the subalgebra $\Diff_{\bl}^{m,l}(M),$ if $A$ is given by
\eqref{exampleboperator} we have
$$
\sigma_{\bl,m,l}(\rho^{-l} A)=\rho^{-l} \sum_{\smallabs{\alpha}+j\leq m} a_{j,\alpha}(\rho,y) \xi^j
\eta^\alpha
$$
where $\xi,\eta$ are ``canonical'' fiber coordinates on $\Tbstar M$
defined by specifying that the canonical one-form be 
$$
\xi \frac{d\rho}\rho + \eta \cdot dy.
$$

There is a canonical symplectic structure of $\Tbstar M^{\circ}$ given by the
exterior derivative of the canonical one-form
$$
\frac 1\rho d\xi \wedge d\rho + d\eta \wedge dy.
$$
The symbol of the commutator operators in $\Psib^* (M)$ is one order lower than
the product, with principal symbol given by the Poisson bracket of the
principal symbols with respect to this structure.  By contrast,
 the
\emph{weight} (second index) of the commutator is, in general, no better than that
of the product, owing to noncommutativity of the normal operators
introduced below.

Here and throughout this paper
we fix a
``b-density,'' which is to say a density which near the boundary is of the
form
$$
f(\rho,y) \abs{\frac{d\rho}\rho\wedge dy_1 \wedge\dots \wedge dy_{n-1}}
$$
with $f>0$ everywhere and smooth down to $\rho = 0$.
Let $L^2_\bl(M)$ denote the space of square integrable functions with respect to
the b-density.
We let $\Hb^m(M)$ denote the
Sobolev space of order $m$ relative to $L^2_\bl(M)$ corresponding to
the algebras $\Diffb^m(M)$ and $\Psib^m(M)$.  In other words, for
$m\geq 0$, fixing $A\in\Psib^m(M)$ elliptic, one has $w\in\Hb^m(M)$ if
$w\in L^2_\bl(M)$ and $Aw\in L^2_\bl(M)$; this is independent of the
choice of the elliptic $A$.  For $m$ negative, the space is defined by
dualization.  (For $m$ a positive integer, one can alternatively give a
characterization in terms of $\Diffb^m(M)$.)  Let
$\Hb^{m,l}(M)=\rho^{l}\Hb^m(M)$ denote the corresponding weighted
spaces.  The space $\Hb^{\infty,l}(M)$ are of special importance, as
they are the spaces of \emph{conormal distributions} with respect to
the boundary (having different possible boundary weights).  They can
more be easily characterized without any microlocal methods by the
iterated regularity condition
$$
u \in \Hb^{\infty,l}(M) \Longleftrightarrow V_1,\dots V_N u \in \rho^l L^2(M)\  \forall N,\ \forall V_j \in \Vb(M).
$$

We recall also that associated to the algebra $\Psib^*(M)$ is
associated a notion of Sobolev wavefront set:
$\WFb^{m,l}(w)\subset \Sb^*M$ is defined only for $w\in \Hb^{-\infty,l}$
(since $\Psib(M)$ is \emph{not} commutative to leading order in
the decay filtration); the definition is then $\alpha\notin\WFb^{m,l}(w)$ if
there is $Q\in\Psib^{0,0}(M)$ elliptic at $\alpha$ such that $Qw\in
\Hb^{m,l}(M)$, or equivalently if there is $Q'\in\Psib^{m,l}(M)$
elliptic at $\alpha$ such that $Q'w\in L^2_{\bl}(M)$. We refer to
\cite[Section~18.3]{Hormander:v3} for a discussion of $\WFb$ from a
more classical perspective, and \cite[Section~3]{Melrose-Vasy-Wunsch:Propagation}
for a general description of the wave front set in the setting of various pseudodifferential
algebras; \cite[Sections~2 and 3]{Vasy:corners} provide
another discussion, including on the b-wave front set relative to
spaces other than $L^2_{\bl}(M)$.

In addition to the principal symbol, which specifies high-frequency
asymptotics of an operator, we will employ the ``normal operator'' which
measures the boundary asymptotics.  For a b-differential operator given by
\eqref{exampleboperator}, this is simply the dilation-invariant operator
given by freezing the coefficients of $\rho D_\rho$ and $D_y$ at $\rho=0,$ hence
\begin{equation}\label{normop}
N(A)\equiv \sum_{\smallabs{\alpha}+j\leq m} a_{j,\alpha}(0,y) (\rho D_\rho)^j
D_y^\alpha \in \Diffb^m([0,\infty) \times \pa M).
\end{equation}

It is instructive in studying operators that are approximately
dilation-invariant near $\pa M$ to employ the Mellin transform.  Thus
 we define the Mellin transform
of $u$ a distribution on $M$ (suitably localized near the boundary) by
setting
\begin{equation}\label{Mellin}
\Mellin u(\sigma, y) = \int \chi(\rho) \rho^{-\imath\sigma -1} \, d\rho
\end{equation}
where $\chi$ is compactly supported and equal to $1$ near $0.$

The Mellin conjugate of the operator $N(A)$ is known as the ``reduced normal
operator'' and, if $N(A)$ is given by \eqref{normop}, the reduced
normal operator is simply the family (in $\sigma$) of operators on $\pa M$
given by
\begin{equation}\label{rednormal}
\widehat{N}(A)\equiv \sum_{\smallabs{\alpha}+j\leq m} a_{j,\alpha}(0,y) \sigma^j
D_y^\alpha.
\end{equation}
This construction can be extended to b-pseudodifferential operators, but 
we will only require it in the differential setting here.  Moreover,
while the construction is more subtle if we extend our coefficient
ring to $\CIlog,$ as we would need to do to consider the d'Alembertian
following the logarithmic coordinate change we will employ below, we
will in practice only employ this construction in the setting of our
original manifold with its smooth coordinates.

The Mellin transform is a useful tool in studying asymptotic
expansions in powers of $\rho$ (and $\log \rho$).  In particular, we
recall from Section 5.10 of \cite{Melrose:APS} that if $u$ is a
distribution on our manifold with boundary $M,$ we write
$$
u \in \phg{E}(M)
$$
iff $u$ is conormal to $\pa M$ with
$$
u \sim \sum_{(z,k) \in E} \rho^{\imath z} (\log \rho)^k a_{z,k}
$$
where $a_{z,k}$ are smooth coefficients on $y \in \pa M.$  Here $E$ is
an \emph{index set,} which is required to satisfy the following
properties\footnote{We have chosen to use the index set conventions of
  \cite{DAOMWC} rather than those in \cite{Melrose:APS}, which differ
  by a factor of $\imath$ in how the powers in the expansion relate to
the $z$ variable in the index set.}:
\begin{itemize}
\item $E \subset \CC \times \{0, 1, 2,\dots\}.$
\item $E$ is discrete.
\item $(z_j, k_j) \in E$ and $\abs{(z_j, k_j)} \to \infty
  \Longrightarrow$ $\Im z_j \to -\infty.$
\item $(z,k) \in E \Longrightarrow (z,\ell) \in E$ for $\ell=0, \dots,
  k-1$ as well.
\item $(z,k) \in E \Longrightarrow$ $(z-j\imath,k) \in E$ for $j \in \NN.$
\end{itemize}
We refer the reader to \cite{Melrose:APS} for an account of why these
conditions are natural ones to impose.  When $z \in \CC$ denotes an
index set, this means the smallest index set containing $(z,0),$ i.e.,
$\{(z-j\imath , 0): j \in \NN\}.$

We now remark that we may characterize distributions in $\phg{E}(M)$ in
two different ways: by Mellin transform, or by applying radial vector
fields.

To see the former, we recall that by Proposition~5.27 of
\cite{Melrose:APS}, we have $u \in \phg{E}(M)$ iff its Mellin transform
$\Mellin u$ is meromorphic, with poles of order $k$ only at points
$z$ such that $(z,k-1) \in E,$ as well as satisfying appropriate
decay estimates in $\sigma.$  (We will state a quantitative $L^2$
version of this result below, hence will not discuss the estimates
here.)

Alternatively, we recall that we may test for polyhomogeneity by use
of \emph{radial vector fields}:  Let $R$ denote the vector field $\rho
D_\rho$ (recalling that $D_\rho$ has a factor of $\imath^{-1}$ built
into it).  We can characterize $u \in \phg{E}(M)$ for $E =
\{(z_j,k_j)\}$ by the requirement that for all $l$ there exists
$\gamma_l$ with $\gamma_l \to \infty$ as $l \to +\infty$ such
that
\begin{equation}
  \label{eq:scalingtest}
  \left( \prod_{(z,k) \in E,\ \im z>-l}(R- z)\right)u
  \in \Hb^{\infty, \gamma_{l}}(M).
\end{equation}
(Note that by our index set conventions, the product includes $k+1$
factors of $(R-z_j)$ if $(z_j,k) \in E,$ since $(z_j, 0), \dots (z_j,
k-1)$ are in $E$ as well.)

Theorem~\ref{mainthm} is about polyhomogeneity not just to
one but to \emph{two} boundary hypersurfaces of a manifold with
codimension-two corners given by blow-up of our original spacetime $M$
at $S_+.$  We thus make a few remarks here on the generalization of
the theory of polyhomogeneity to this context; it is covered in
some detail in Section~5.10 of \cite{Melrose:APS}, but that
treatment only deals with the case where all but one of the index sets
are the set $$0 \equiv \{(-j\imath,0): j \in \NN\}$$ of indices for
smooth functions.  The more general case is treated in the unpublished \cite{DAOMWC}, but
follows similarly.  Thus, here we have an index set at a codimension
two corner with defining functions $\rho_1,\rho_2$ such that
$E=(E_1,E_2)$ with $E_j$ an index set at each of the boundary
hypersurfaces individually.  The idea is simply that $u$ has an
expansion at each boundary hypersurface with coefficients that are polyhomogeneous
at the other:
$$
u \in \phg{E}(M)
$$
iff for each $\ell=1,2,$ we have
$$
u \sim \sum_{(z, k) \in E_\ell} \phi_\ell(z,k) \rho_\ell^{\imath z} (\log
\rho_\ell)^{k}\bmod H^{\infty,\gamma_{\ell}} (M),
$$
where for each $(z,k)$ we have coefficients
$$
\phi_\ell (z,k)  \in \phg{E(\ell)}
$$
with $E(\ell)$ given by $(0, E_2)$ resp.\ $(E_1,0)$ for
$\ell=1,2$ and where
for $\ell=1,2$ $\gamma_{\ell} = (\infty, -A)$ resp.\ $(-A, \infty)$ with
fixed $A >\sup \{\Im z: (z,k) \in E_\ell$, $\ell=1,2$\}. 

In testing for polyhomogeneity at two boundary hypersurfaces by radial
vector fields, it is of considerable importance that it suffices to
test \emph{individually} at each boundary hypersurface, with uniform
estimates at the other; this is a consequence of a characterization by
multiple Mellin transforms (see Chapter 4 of Melrose's
book~\cite{DAOMWC}, or indeed the Appendix of the PhD thesis of
Economakis~\cite{Economakis}, where a proof provided by Mazzeo is
presented).  Thus we will in particular use the following:
\begin{proposition}[Mazzeo, Melrose]\label{proposition:doublemellin}
Let $R_\ell$ denote $\rho_\ell D_{\rho_\ell},$ the radial vector
field at the $\ell$'th hypersurface.
Suppose that for each $\ell=1,2,$ there exists a $\gamma'$, and for all $A$ there is a $\gamma_{A}$, with
$\lim_{A\to+\infty}\gamma_{A} = \infty$ such that
\begin{equation}
  \label{eq:rad-vf-exp}
  \left( \prod_{(z,k) \in E_\ell,\ \im z>-A}(R_\ell-z)\right)u
  \in \Hb^{\infty, \gamma_{A},\gamma '},
\end{equation}
where $\gamma_{A}$ refers to the growth slot for the $\ell$'th hypersurface, and (abusing
notation) $\gamma'$ to the
growth at the other boundary hypersurfaces. 
Then $u \in \phg{E}(M)$ where $E=(E_1,E_2)$.
\end{proposition}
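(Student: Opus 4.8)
The plan is to reduce Proposition~\ref{proposition:doublemellin} to the one‑hypersurface characterization of polyhomogeneity (Melrose's Proposition~5.27, equivalently the radial vector field test \eqref{eq:scalingtest}) by passing to the double Mellin transform in the two boundary defining functions $\rho_1,\rho_2$ near the corner. Write $U(\sigma_1,\sigma_2)=\Mellin_1\Mellin_2 u$, each Mellin transform being taken after a cutoff near the corner as in \eqref{Mellin}. Since the radial vector fields $R_\ell=\rho_\ell D_{\rho_\ell}$ act on disjoint coordinate blocks near the corner and each becomes multiplication by $\sigma_\ell$ after $\Mellin_\ell$, the hypothesis \eqref{eq:rad-vf-exp} for each $\ell$ translates into a statement about holomorphic continuation of $U$ in one variable with prescribed poles, uniformly in the other.

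Concretely, first fix $\ell=1$. Because $u$ is conormal at the corner, $U$ is holomorphic on a product of half‑planes $\{\Im\sigma_1>\beta_1\}\times\{\Im\sigma_2>\beta_2\}$ with polynomial decay along horizontal lines. Applying $\Mellin_1\Mellin_2$ to \eqref{eq:rad-vf-exp} for $\ell=1$ and arguing as in the proof of the one‑face result, one obtains that $\bigl(\prod_{(z,k)\in E_1,\ \Im z>-l}(\sigma_1-z)\bigr)U(\sigma_1,\sigma_2)$ continues holomorphically to $\{\Im\sigma_1>-l\}$ for every $l$, taking values in functions holomorphic in the \emph{fixed} half‑plane $\{\Im\sigma_2>-A\}$ determined by the weight $\gamma'$, with polynomial decay as $|\Re\sigma_1|\to\infty$ uniform on such strips. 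The hypothesis for $\ell=2$ yields the symmetric statement with $\sigma_1$ and $\sigma_2$ interchanged.

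The heart of the matter is to combine these two one‑sided continuations into a single \emph{joint} meromorphy statement: that $U(\sigma_1,\sigma_2)$ is meromorphic on $\CC\times\CC$ with pole divisor contained in $\{\sigma_1=z:(z,k-1)\in E_1\}\cup\{\sigma_2=w:(w,j-1)\in E_2\}$, with pole orders bounded by the multiplicities in $E_1,E_2$, and with the requisite polynomial decay along horizontal lines in each variable uniformly in the other. From the two statements above one knows that $V\equiv\bigl(\prod_{E_1}(\sigma_1-z)\bigr)\bigl(\prod_{E_2}(\sigma_2-w)\bigr)U$ is holomorphic on the union $\{\Im\sigma_1>-l\}\times\{\Im\sigma_2>-A\}\ \cup\ \{\Im\sigma_1>-A'\}\times\{\Im\sigma_2>-l'\}$; a Hartogs‑type ``cross theorem'' argument, carried out with the available growth bounds, upgrades this to holomorphy on the full bi‑strip $\{\Im\sigma_1>-l\}\times\{\Im\sigma_2>-l'\}$. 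This is the step I expect to be the main obstacle: it is exactly the two‑variable quantitative analogue of Melrose's Proposition~5.27, and rather than reprove it I would quote Chapter~4 of \cite{DAOMWC} (or the argument of Mazzeo reproduced in the appendix of \cite{Economakis}), where the needed estimates are established.

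Once joint meromorphy with a product pole divisor is in hand, the conclusion follows from the inverse double Mellin transform: writing $u$ as a double contour integral of $U$ over $\{\Im\sigma_1=c_1\}\times\{\Im\sigma_2=c_2\}$ and pushing each contour downward while collecting residues, a pole in $\sigma_1$ at $z$ with $(z,k)\in E_1$ contributes a term $\rho_1^{\imath z}(\log\rho_1)^{k}$ whose coefficient is, by construction, the inverse Mellin transform in $\sigma_2$ of a function meromorphic with $E_2$‑poles, hence lies in $\phg{(0,E_2)}$; symmetrically one obtains the expansion at the second hypersurface with coefficients in $\phg{(E_1,0)}$, and the error terms have the weights $\gamma_\ell$ prescribed in the definition once the contour in the relevant variable has been pushed past $\Im\sigma_\ell=-l$. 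Since both expansions are read off from the \emph{same} function $U(\sigma_1,\sigma_2)$, their iterated leading coefficients agree automatically, and this is precisely the definition of $u\in\phg{E}(M)$ with $E=(E_1,E_2)$ recalled above.
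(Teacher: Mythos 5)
The paper gives no proof of this proposition: it is quoted as a known result of Mazzeo and Melrose, with the reader referred to the multiple-Mellin-transform characterization in Chapter~4 of \cite{DAOMWC} and the appendix of \cite{Economakis}. Your sketch follows exactly that route (double Mellin transform, one-variable continuations from each radial test, a Hartogs-type joint meromorphy step deferred to those same references, then inverse transform and contour shifting), so it is consistent with, and indeed more detailed than, what the paper itself provides.
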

Note that there is no
requirement in \eqref{eq:rad-vf-exp} that the coefficients in the expansion (or
indeed the remainder on the right hand side) be polyhomogeneous; this
follows automatically when~\eqref{eq:rad-vf-exp} is required for all
boundary hypersurfaces $H.$

We let $\ressetinit$ denote the ``index
set''\footnote{Note that this is not technically an index set as
  defined, as it is not closed under shifts by $-\imath$.} of poles of
the operator family $P_{\sigma}^{-1}$ with imaginary part less than
some fixed $\varsigma_{0}$.  Here we have
\begin{equation*}
  P_{\sigma} \equiv \widehat{N}(L)(\sigma)
\end{equation*}
with $L$ the rescaled conjugate of $\Box_{g}$ given by \eqref{Ldef}
and $\widehat{N}(L)$ the reduced normal operator as defined in
\eqref{rednormal}.  (The spaces on which we should consider this
operator to act will be defined below, in Section~\ref{sec:Fredholm}.)
Thus $(\sigma_{0}, k) \in \ressetinit$ if $\sigma_{0}$ is a pole of
$P_{\sigma}^{-1}$ of order at least $k+1$.   Though $P_{\sigma}^{-1}$
may have poles $\sigma_{j}$ with $\Im \sigma_{j} \to +\infty$, the
presence of $\varsigma_{0}$ in the definition restricts our attention
to a lower half-plane.  In practice, we fix $\varsigma_{0}$ large
enough to consider only the half-plane in which our function is not a
priori holomorphic.

To account for accidental multiplicities arising from multiplication
by $\CI$ (or $\CIlog$, defined in Definition~\ref{def:CIlog})
functions, we must also include in the resonance set the shifts of
$\ressetinit$ corresponding to the index sets of $\CI$ (or $\CIlog$)
functions.  Namely, for each $j=1,2, \dots$, we set
$$
\mathcal{E}_j =
  \{ (\sigma -\imath j, k) : (\sigma , k) \in \ressetinit
  (\varsigma_{0})\} 
$$
We define the massless resonance set as the extended union of $\mathcal{E}_{j}$:
\begin{equation}
  \label{eq:resset}
  \masslessres \equiv \ressetinit \eu \mathcal{E}_{1}\eu
  \mathcal{E}_{2}\eu \dots,
\end{equation}
where $\eu$ denotes the extended union of index sets as
in~\cite[Section 5.18]{Melrose:APS}: 
$$
E \eu F \equiv E \cup F \cup \{(z,k): (z,\ell_1) \in E,\
(z,\ell_2) \in F,\ k=\ell_1+\ell_2+1\};
$$
this corresponds to the increase in order of the poles of a product of
meromorphic function in the case when poles of the two functions
coincide.  Finally we define the resonance set that we obtain on
the ``logified'' space---when $m \neq 0$---by transformation of
$\masslessres$ (see Proposition~\ref{proposition:asymptotic.log}):
$$
\resset\equiv \begin{cases} \masslessres & m=0\\
\{ (\sigma - j\imath , k + \ell) : (\sigma, k) \in \masslessres,\ 0
     \leq \ell \leq j\} & \mass \neq 0
\end{cases}
$$

We also set 
$$
\E_{\scri}=\begin{cases}
0, & m=0\\
\{(-j \imath, \ell),\ \ell\in \{0, \dots, 2j\}, & m \neq 0.
\end{cases}
$$
Thus, the latter is the index set describing an expansion in $\rho^j
(\log \rho)^\ell$ for $\ell =0,\dots, 2j.$

Finally, write the ``total index set'' as 
$$
\Etot=(\resset, \E_{\scri})
$$
where the two index sets on the RHS are for the lift of $C_+$ and
$\scri^+$ respectively in the radiation field blowup.  This is the
index set that we will show occurs in the asymptotic expansion.

In an intermediate step of our construction, we will
need to consider slightly different kinds of asymptotic expansions: those that
are global expansions in the $\rho$ variable of $M$ but with
coefficients that are not smooth: they will have conormal
singularities of increasing orders at $S_+.$  Index sets and
manipulations for these expansions are defined analogously to those of
the smooth expansions described above, but we need to slightly clarify
the testing definition:
Suppose that 
\begin{equation}\label{asymptotic.conormal}
u \sim  \sum a_j(v,y) \rho^{\imath \sigma_j} (\log \rho)^{k_j}
\end{equation}
where now we assume that overall, $u$ is a conormal distribution
with respect to
$\rho=v=0,$ and that for some fixed $q_0, s_0,L$
$$
a_j \in I^{(q_0-\Re (\imath\sigma_j))}
$$
are also conormal, and where the asymptotic sum now means that
$$
u - \sum_{\Im \sigma_j \geq -A} a_j(v,y) \rho^{\imath \sigma_j} (\log
\rho)^{k_j} \in \rho^{L+A-0}
\Hb^{s_0 -A}(M).
$$
Thus the the remainder has better decay at the cost of conormal
regularity (and since $u$ is a priori conormal w.r.t.\ $N^* S_+,$ this loss of
regularity is only there).
\begin{proposition}\label{proposition:asymptotic.conormal}
A distribution $u$ conormal with respect to $N^* S_+$ enjoys the expansion
\eqref{asymptotic.conormal} (interpreted as described above) if and
only if we have
$$
\prod_{\Im \sigma_j>-A} (\rho D_\rho-\sigma_j)^{k_j} u \in \rho^{L+A-0}
\Hb^{q_0-A}(M).
$$
\end{proposition}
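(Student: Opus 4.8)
The plan is to imitate the single-variable characterization of polyhomogeneity---Proposition~5.27 of~\cite{Melrose:APS} together with its quantitative $L^{2}$-Mellin refinement---run in the variable $\rho$ alone, but with the coefficient space enlarged from $\CI(\partial M)$ to the module $\mathcal A$ of distributions on $\partial M$ conormal with respect to $N^{*}S_{+}$. What makes this legitimate is that $\rho D_{\rho}$ is tangent to $S_{+}=\{v=0\}$ and commutes with every vector field in $\Vb(\partial M)$ tangent to $S_{+}$; hence $\rho D_{\rho}$ preserves $\mathcal A$ together with its filtration by conormal order, and, more quantitatively, the commutators responsible for the $\sigma$-uniformity of the relevant estimates vanish. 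Consequently the $\rho$-Mellin transform decouples from the conormal structure at $S_{+}$, and the classical Mellin argument goes through ``with coefficients in $\mathcal A$.'' As a preliminary reduction, away from a neighbourhood of $S_{+}$ the condition ``conormal with respect to $N^{*}S_{+}$'' is empty, so there the proposition is exactly the smooth-coefficient equivalence recalled around~\eqref{eq:scalingtest}; it therefore suffices to work in a collar $[0,1)_{\rho}\times\{\smallabs v<\epsilon\}$ with coordinates $(\rho,v,y)$.

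From the compatibility above one obtains the conormal analogue of the $L^{2}$-Mellin characterization: for $u$ conormal with respect to $N^{*}S_{+}$, membership of $u$ in $\rho^{\mu}\Hb^{s}(M)$ (with a fixed conormal-regularity budget at $S_{+}$) is equivalent to holomorphic continuation of $\Mellin u(\sigma,\cdot)$ to $\{\Im\sigma>-\mu\}$, with values in the $S_{+}$-conormal Sobolev space of order $s$, square-integrable on each line $\Im\sigma=\mathrm{const}>-\mu$ with the usual $\ang\sigma$-weights, and uniformly so on compact subsets. Because $u$ has finite order, $\Mellin u$ is a priori holomorphic with values in $\mathcal A$ for $\Im\sigma>\varsigma$, some $\varsigma$.

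Granting this, the ``only if'' implication is formal. With $P=\prod_{\Im\sigma_{j}>-A}(\rho D_{\rho}-\sigma_{j})^{k_{j}}$ (a finite product, hence $P\in\Diffb^{*}(M)$), the operator $P$ is built so as to annihilate each retained term $a_{j}\rho^{\imath\sigma_{j}}(\log\rho)^{k_{j}}$; thus $Pu=P(\text{remainder})$, and since $P$ is a b-differential operator and $\rho D_{\rho}$ preserves $N^{*}S_{+}$-conormality, $Pu$ lies in a weighted b-Sobolev space with the same weight as the remainder, still conormal at $S_{+}$, with the Sobolev index shifted down by the order of $P$; matching indices gives $Pu\in\rho^{L+A-0}\Hb^{q_{0}-A}(M)$. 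Conversely, given $Pu\in\rho^{L+A-0}\Hb^{q_{0}-A}(M)$, Mellin transforming yields that $\prod_{\Im\sigma_{j}>-A}(\sigma-\sigma_{j})^{k_{j}}\,\Mellin u(\sigma,\cdot)$ continues holomorphically to $\{\Im\sigma>-(L+A)\}$ with the corresponding $S_{+}$-conormal $L^{2}$-bounds; dividing by the polynomial, whose zeros all have imaginary part $>-A>-(L+A)$, shows $\Mellin u$ is meromorphic there, holomorphic on $\{-(L+A)<\Im\sigma\le-A\}$, with poles confined to the $\sigma_{j}$ with $\Im\sigma_{j}>-A$, of order at most $k_{j}$, and with Laurent coefficients conormal with respect to $N^{*}S_{+}$. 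Shifting the inverse-Mellin contour from $\Im\sigma=\varsigma$ down toward $\Im\sigma=-(L+A)$ and collecting residues then reconstitutes the expansion~\eqref{asymptotic.conormal}: the residue at $\sigma_{j}$ supplies the terms $a_{j}\rho^{\imath\sigma_{j}}(\log\rho)^{k}$ with $k\le k_{j}$, whose conormal order $q_{0}-\Re(\imath\sigma_{j})$ is read off from the $S_{+}$-conormal regularity of $\Mellin u$ near the line $\Im\sigma=\Im\sigma_{j}$, and the residual contour integral, being holomorphic on $\{\Im\sigma>-(L+A)\}$, lies in $\rho^{L+A-0}\Hb^{s_{0}-A}(M)$ and is conormal with respect to $N^{*}S_{+}$.

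The step I expect to be the only real obstacle is the compatibility asserted in the first two paragraphs: that the $L^{2}$-Mellin characterization of polyhomogeneity in $\rho$ persists verbatim when ``smooth in $(v,y)$'' is replaced by ``conormal with respect to $N^{*}S_{+}$,'' with all estimates uniform in $\sigma$. Morally this is clear because $S_{+}$ lies inside $\partial M$ and $\rho D_{\rho}$ is tangent to it, so the dilation structure and the conormal module genuinely commute; but making the $\sigma$-uniform conormal estimates precise---both for $\Mellin u(\sigma,\cdot)$ and for the residues---is the one place where care is needed. Everything else is the contour-shift-and-residue computation of the classical theory, and the relations among $A$, $L$, $s_{0}$, $q_{0}$ and the conormal orders are bookkeeping.
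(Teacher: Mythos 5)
Your argument is correct and is essentially the intended one: the paper in fact states Proposition~\ref{proposition:asymptotic.conormal} without proof, treating it as the routine adaptation to coefficients conormal at $S_+$ of the standard Mellin-transform/radial-vector-field characterization of polyhomogeneity (Proposition~5.27 of \cite{Melrose:APS} and the discussion around \eqref{eq:scalingtest}), which is exactly what you carry out. The point you single out as the only real issue --- that $\rho D_\rho$ commutes with the module of vector fields tangent to $S_+$, so the Mellin and contour-shifting arguments run verbatim with values in the $N^*S_+$-conormal spaces --- is indeed the crux, and the remaining index-matching is bookkeeping.
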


\subsection{Scattering geometry}

In addition to the notion of b-geometry, we also need to study a
different set of metric and operator structures on a manifold with
boundary.  If we radially compactify Euclidean space, we remark that
linear vector fields become the b-vector fields described
above, while \emph{constant coefficient} vector fields become elements
of 
$$
\Vsc (M) \equiv \rho \Vb (M).
$$
These ``scattering'' vector fields are thus spanned over $\CI(M)$ by
$\rho^2 \pa_\rho$ and $\rho \pa_y$ in the coordinates employed above,
and as with b-vector fields, they are sections of a bundle, denoted
$\Tsc M.$  The dual bundle, $\Tscstar M,$ has sections spanned by the
one-forms
$$
\frac{d \rho}{\rho^2},\ \frac{dy}{\rho}.
$$
The Euclidean and Minkowski metrics, under radial compactification of
Euclidean resp.\ Minkowski spaces to a ball, are quadratic in these
one-forms, and hence (non-degenerate) quadratic forms on $\Tsc M.$

We may build ``scattering differential operators'' out of scattering
vector fields by setting
$$
A=\sum_{\smallabs{\alpha}+j\leq m} a_{j,\alpha}(\rho,y) (\rho^2 D_\rho)^j
(\rho D_y)^\alpha \in \Diffsc^m(M)
$$
There is a well defined ``scattering principal symbol''
$\sigmasc^k(P)$ which replaces $\rho^2 D_\rho$ resp.\ $\rho D_y$ by
$\xisc$ resp.\ $\etasc,$ their canonical dual variables in the fibers of
$\Tscstar M.$  See \cite{RBMSpec} for details, as well as the
construction of the associated pseudo\-dif\-ferential calculus.

\section{Long-range scattering geometry}
\label{sec:long-range-scatt}

In this section, we specify our geometric hypotheses in detail.

Let $(M,g)$ be an $n$-dimensional manifold with boundary $X = \pa M$
equipped with a Lorentzian metric $g$ over $M^{\circ}$ such that $g$
extends to be a nondegenerate quadratic form on $\Tsc M$ of signature
$(+, - , \dots, -)$.

We motivate our definition of Lorentzian scattering metrics by
recalling that if we radially compactify Minkowski space by setting
$t=\rho^{-1} \cos \theta,$ $x_j=\rho^{-1}\omega_j 
\sin \theta$ with $\omega\in S^{n-2}$ and then set $v=\cos 2\theta,$
the metric becomes:
\begin{equation}\label{Minkowskimetric}
  g = v \frac{d\rho^{2}}{\rho^{4}} - \frac{v}{4(1-v^{2})} \frac{dv^{2}}{\rho^{2}} -
  \frac{1}{2} \big( \frac{d\rho}{\rho^{2}}\otimes \frac{dv}{\rho} +
  \frac{dv}{\rho} \otimes \frac{d\rho}{\rho^{2}}\big) - \frac{1-v}{2}\frac{d\omega^{2}}{\rho^{2}}.
\end{equation} 
This motivates the form of the following definition when $m=0.$ The
more general case is of course motivated by the need to include the
case of nontrivial solution to the Einstein vacuum equations, and in
particular we show below in Appendix~\ref{appendix:Kerr} that the Kerr
solution of the Einstein equations is in this broader class
\emph{sufficiently far from the event horizon and away from timelike
  infinity}: for any $\ep>0$ and $C_\ep$ sufficiently large, the
region described in Boyer-Lindquist coordinates by $r>C_\ep+\ep t$
will have the desired metric form.

\begin{definition}
  \label{def:scmetric}
  We say that $g$ is a long-range Lorentzian scattering metric if $g$
  is a smooth, Lorentzian signature, symmetric bilinear form on $\Tsc
  M$, and there exist a boundary defining function $\rho$ for $M$, a
  function $v\in \CI(M)$, and a constant $\mass\in \reals$ so that
  \begin{enumerate}
  \item 
    when $V$ is a scattering normal vector field, $g(V,V)$ has the
    same sign as $v$ at $\rho = 0$, and
  \item in a neighborhood of $\{ v = 0, \rho = 0\}$ we have
    \begin{equation*}
      g = (v - \mass \rho) \frac{d\rho^{2}}{\rho^{4}} - \left(
        \frac{d\rho}{\rho^{2}}\otimes \frac{\vartheta}{\rho} +
        \frac{\vartheta}{\rho}\otimes \frac{d\rho}{\rho^{2}}\right) - \frac{\tilde{g}}{\rho^{2}}
    \end{equation*}
    with $\vartheta$ a smooth $1$-form on $M$ and $\tilde{g}$ a smooth
    symmetric $2$-cotensor on $M$ so that
    \begin{equation*}
      \tilde{g}\rvert_{\Ann(d\rho, dv)} \text{ is positive definite.}
    \end{equation*}
    We further require that
    \begin{equation*}
      \vartheta = \frac{1}{2}dv + O(v) + O(\rho) \text{ near } \rho = v = 0.
    \end{equation*}
  \end{enumerate}
\end{definition}

We make two additional global assumptions on the structure of our spacetime:
\begin{definition}
  \label{def:non-trapping}
  A Lorentzian scattering metric is \emph{non-trapping} if
  \begin{enumerate}
  \item The set $S = \{ v = 0, \rho = 0\}\subset X$ splits into
    $S_{+}$ and $S_{-}$, each a disjoint union of connected
    components; we further assume that $\{ v > 0 \}\subset X$ splits
    into components $C_{\pm}$ with $S_{\pm} = \pa C_{\pm}$.  We denote
    by $C_{0}$ the subset of $X$ where $v < 0$.
  \item The projections of all null bicharacteristics on $\Tscstar M
    \setminus o$ tend to $S_{\pm}$ as their parameter tends to $\pm
    \infty$ (or vice versa).
  \end{enumerate}
\end{definition}

In particular, our non-trapping hypothesis guarantees that $(M,g)$ is
time-orientable: At each point the light cone has two components; we
specify that the future-directed component is the one from which the
nullbicharacteristics for the forward Hamilton flow tend to $S_{+}$.

The final definition needed to make sense of the statement of
Theorem~\ref{mainthm} is the following.
\begin{definition}
  Let $\Box_g u=f$ on $(M,g)$ a Lorentzian scattering manifold.  We
  say that $u$ is a \emph{forward solution} if $u$ is smooth near
  $\overline{C_{-}}$ and vanishes to infinite order there.
\end{definition}

We now analyze the inverse metric.  Our metric, as a metric on the
fibers of $\Tsc_{X}M$, i.e., in the frame
\begin{equation*}
  \rho^{2}\pa_{\rho}, \, \rho \pa_{v}, \, \rho \pa _{y}
\end{equation*}
has the block form
\begin{equation}
  \label{eq:metric-at-bdry}
  G_{0} =
  \begin{pmatrix}
    v & -\frac{1}{2} + a_{0}v & a_{1}v & \dots & a_{n-2}v \\
    -\frac{1}{2} + a_{0}v & b & c_{1} & \dots & c_{n-2} \\
    a_{1}v & c_{1} & -h_{1,1} & \dots & -h_{n-2,1} \\
    \vdots & \vdots & \vdots & \ddots & \vdots \\
    a_{n-2}v & c_{n-2} & -h_{1,n-2} & \dots & -h_{n-2,n-2}
  \end{pmatrix},
\end{equation}
with the lower $(n-2)\times (n-2)$ block negative definite, hence
$h_{ij}$ is positive definite.  Blockwise inversion shows that in the
frame 
\begin{equation*}
  \frac{d\rho}{\rho^{2}},\, \frac{dv}{\rho}, \, \frac{dy}{\rho},
\end{equation*}
the inverse metric when restricted to the boundary has the block
form
\begin{equation*}
  G_{0}^{-1} =
  \begin{pmatrix}
    \omega & -2 + \alpha v & -\frac{1}{2}\mu^{\mathrm{T}} + O(v)\\
    -2 + \alpha v & -4 v + \beta v^{2} & -v
    \Upsilon^{\mathrm{T}} + O(v^{2})\\
    -\frac{1}{2}\mu + O(v) & -v\Upsilon + O(v^{2}) & -h^{-1} + O(v)
  \end{pmatrix}.
\end{equation*}
In the above, $h^{-1} = h^{ij}$ is the inverse matrix of $h_{ij}$,
while $\omega, \alpha, \beta, \mu_{j}$, and $\Upsilon_{j}$ are smooth near $\rho = v
= 0$, and $A^{\mathrm{T}}$ denotes the transpose of a matrix $A$.

In a neighborhood of the boundary, i.e., at $\rho \neq 0$, there are
further correction terms in the inverse metric as the actual metric is
given by
\begin{align*}
  G&= G_{0} + H, \\
  H &=
  \begin{pmatrix}
    -\mass \rho + O(\rho^{2}) & O(\rho) & O(\rho) \\
    O(\rho) & O(\rho) & O(\rho) \\
    O(\rho) & O(\rho) & O(\rho)
  \end{pmatrix}.
\end{align*}
Thus in the inverse frame above,
\begin{equation}
  \label{eq:inverse-perturbation}
  G^{-1} = G_{0}^{-1} +
  \begin{pmatrix}
    O(\rho) & O(\rho) & O(\rho) \\
    O(\rho) & 4 \mass \rho+O(\rho^2) + O(\rho v) & O(\rho) \\
    O(\rho) & O(\rho) & O(\rho) 
  \end{pmatrix}.
\end{equation}

Thus in the coordinate frame $\pd[\rho]$, $\pd[v]$, $\pd[y]$, the dual
metric becomes
\begin{equation}
  \label{eq:dualmetric}
  \begin{pmatrix}
    g^{\rho\rho}\rho^{4}  + O(\rho^{5}) & g^{\rho v} \rho^{3} +
    O(\rho^{4}) & g^{\rho y} \rho^{3} + O(\rho^{4}) \\
    g^{\rho v} \rho^{3} + O(\rho^{4}) & g^{vv}\rho^{2}+ O(\rho^{4}) +
    O(\rho^{3}v) & g^{v y}\rho^{2} + O(\rho^{3}) \\
    g^{\rho y}\rho^{3} + O(\rho^{4}) & g^{vy}\rho^{2} + O(\rho^{3}) &
    g^{yy}\rho^{2} + O(\rho^{3})
  \end{pmatrix},
\end{equation}
where $g^{\bullet\bullet}$ are given by
\begin{align}
  \label{eq:inversemetriccomponents}
  g^{\rho\rho} &= \omega & g^{\rho v} &= -2 + \alpha v & g^{\rho y} &= -
  \frac{1}{2}\mu + O(v) \\
  g^{vv} &= - 4v + 4\mass \rho + \beta v^{2} & g^{vy} &= -v\Upsilon +
  O(v^{2}) & g^{yy} &= -h^{-1} + O(v) \notag
\end{align}
Again all terms are smooth.  We remark at this juncture that the
appearance of $m$ only at level of $O(\rho)$ terms means that the
normal operator of rescaled $\Box$ will be independent of $m,$ and
arguments involving only the inversion of this normal operator will
thus be identical to those in \cite{radfielddecay}.  Arguments
involving the detailed structure of $\Box$ near $S_+,$ however,
require serious modifications.

From \eqref{eq:dualmetric} it is easy to read off the scattering principal symbol of
$\Box_g:$ if the canonical one-form on $\Tscstar M$ is given by
$$
\xisc \frac{d\rho}{\rho^2}+\gammasc \frac{d v}{\rho}+\etasc \frac{d y}{\rho},
$$
then
\begin{multline}
\sigmasc^2 (\Box_g) = (\omega-m\rho+O(\rho^2)) \xisc^2 + (-4+2\alpha
v+O(\rho)) \xisc \gammasc+ (-4v+\beta v^2) \gammasc^2\\
-\big(h^{ij}+O(v)+O(\rho)\big) (\etasc)_i (\etasc)_j+
(-2v\Upsilon+O(v^2)+O(\rho))\gammasc \etasc+(-\mu+O(v)+O(\rho)) \xisc \etasc.
\end{multline}
The transition to the b-principal symbol is likewise quite simple,
since dividing by $\rho^2$ simply converts all sc-vector fields into
corresponding b-vector fields, with commutator terms contributing only
at lower order.  Hence we simply obtain
\begin{multline}\label{bsymbol1}
\sigmab^2 (\rho^{-2} \Box_g) = (\omega-m\rho+O(\rho^2)) \xi^2 + (-4+2\alpha
v+O(\rho)) \xi \gamma+ (-4v+\beta v^2) \gamma^2\\
-\big(h^{ij}+O(v)+O(\rho)\big) \eta_i \eta_j+
(-2v\Upsilon+O(v^2)+O(\rho))\gamma \eta+(-\mu +O(v)+O(\rho)) \xi \eta.
\end{multline}

\section{The Hamilton vector field and its radial set}
\label{sec:form-hvf}

We record the form of the b-Hamilton vector field of the conjugated
operator.  If $\lambda$ is the b-principal symbol of the conjugated
and rescaled operator
\begin{equation}
  \label{eq:Ldef}
  L = \rho^{-(n-2)/2 - 2}\Box_{g} \rho ^{(n-2)/2}
\end{equation}
then, since conjugation does not affect the principal symbol, we
still have, by \eqref{bsymbol1}
\begin{align}
  \label{eq:b-princ-symb}
  \lambda \equiv \sigmab (L) &= g^{\rho\rho}\xi^{2} - 2(2-\alpha v +
  O(\rho))\xi\gamma - (4v - 4\mass \rho - \beta v^{2} + O(\rho v) +
  O(\rho^{2}))\gamma^{2} \notag \\
  &\quad + 2g^{\rho y} \cdot \eta \xi - 2 (v \Upsilon + O(\rho))\cdot
  \eta \gamma + g^{y_{i}y_{j}}\eta_{i}\eta_{j}, 
\end{align}
which yields the Hamilton vector field
\begin{align*}
  \sH_{\lambda} &= \left( 2 g^{\rho\rho}\xi - 2 ( 2-\alpha v +
    O(\rho)) \gamma + 2g^{\rho y}\cdot \eta\right)\rho \pd[\rho] \\
  &\quad - 2 \left[ \left( 4v - \beta v^{2} - 4 \mass \rho + O(\rho v)
      + O(\rho^{2})\right)\gamma + (2- \alpha v + O(\rho))\xi +
    (v\Upsilon + O(\rho)) \cdot \eta \right]\pd[v]\\
  &\quad + 2\left( g^{\rho y}\xi - (v\Upsilon +O(\rho))\gamma +
    g^{yy}\eta\right)\cdot \pd[y] - (\rho \pd[\rho]\lambda)\pd[\xi] -
  (\pd[v]\lambda)\pd[\gamma] - (\pd[y]\lambda)\cdot\pd[\eta] .
\end{align*}

We now analyze the \emph{radial set} $\cR$ of the Hamilton vector
field within the characteristic set of $L$.  This is defined as the
conic set
$$
\cR=\{p \in \Tbstar M: \lambda(p)=0, \sH_{\lambda}\rvert_p \in \RR R\},
$$
where $R$ denotes the scaling vector field in the fibers of $\Tbstar
M.$ In order for $\sH_{\lambda}\rvert_{p}\in \RR R$, the projection
$\pi\sH_{\lambda}$ of $\sH_{\lambda}$ to the base must vanish as a
smooth vector field.  We recall that $\lambda$ is a nondegenerate
Lorentzian metric on the fibers of $\Tbstar M$ and denote the induced
b-metric on $\Tb M$ by $g_{b}$.  For a point $p = (x^{i},\zeta_{i})\in
\Tbstar M$ not in the zero section, the projection $\pi \sH_{\lambda}$
is given by
\begin{equation*}
  \pi \sH_{\lambda} = 2 \left( g^{\rho j}\zeta _{j}\rho \pd[\rho] +
    g^{v j}\zeta _{j} \pd[v] + g^{yj}\zeta _{j} \pd y\right).
\end{equation*}
In other words, at a point $p = (x, \zeta) \in \Tbstar M$, the
projection $\pi \sH_{\lambda}$ is the vector at $x$ associated to
$\zeta$ by regarding $g_{b}$ as a linear map $\Tbstar_{x} M \to
\Tb_{x} M$.  Thus $\pi \sH_{\lambda}$ must be a non-vanishing b-vector
field.  In particular, for it to vanish as a smooth vector field, it
must be a nonzero multiple of $\rho \pd[\rho]$.  We further have that 
\begin{equation*}
  g_{b} \left( \pi \sH_{\lambda}, \pi
    \sH_{\lambda}\right) = 4g_{ij}(g^{ik}\zeta_{k})(g^{j\ell}\zeta_{\ell}) =
    4(g^{j\ell}\zeta_{j}\zeta_{\ell}) = 4\lambda (p),
\end{equation*}
and so $\rho \pd[\rho]$ must be a null vector field at $\rho = 0$ and
thus $v=0$. An examination of the coefficients of the spatial
vector fields then shows that the radial set $\cR$ within $\rho = 0$
is exactly $v = 0$, $\eta = 0$, $\xi = 0$.  Equivalently (and this
will be used below---cf.\ \eqref{generators}), we can take it to be
defined by $\lambda, \rho, \eta, \xi$, substituting $\lambda$ for $v$
as a defining function.

On the fiber compactification of $\Tbstar M$ near $\cR$, we use local
coordinates
\begin{equation*}
  \nu = \frac{1}{\gamma}, \hat{\xi} = \frac{\xi}{\gamma}, \hat{\eta} = \frac{\eta}{\gamma},
\end{equation*}
and compute the linearization of $\sH_{\lambda}$ at $\cR$.  Modulo
terms vanishing quadratically at $\pd \cR$, we have
\begin{align*}
  \nu \sH_{\lambda} &= -4\rho \pd[\rho] + (-8v - 4\hat{\xi} + 8 \mass
  \rho)\pd[v] + 2 \left( g^{\rho y}\hat{\xi} - v \Upsilon + c\rho +
    g^{yy}\hat{\eta}\right)\pd[y] \\
  &\quad - 4 \mass \rho \pd[\hat{\xi}] - 4 \left( \nu \pd[\nu] +
    \hat{\xi} \pd[\hat{\xi}] + \hat{\eta}\pd[\hat{\eta}]\right) +
  \cI^{2}\Vf (\overline{\Tbstar} M),
\end{align*}
with $c$ smooth.

In particular, the linearization of $\nu \sH_{\lambda}$ has
eigenvectors and eigenvalues given by
\begin{align*}
  &dv + d\hat{\xi} - \mass\,d\rho \text{ with eigenvalue }-8, \\
  &d\rho , d\nu, d\hat{\eta}, \text{ with eigenvalue }-4, \\
  &4dy + 2 g^{yy}d\hat{\eta} + (2c - 3m\Upsilon - 2 m g^{\rho y})d\rho
  - \Upsilon dv + (2g^{\rho y} + \Upsilon)d\hat{\xi}, \text{ with eigenvalue } 0.
\end{align*}
For $\mass \neq 0$, this leaves one dimension unaccounted for, and in
a notable difference with the short-range case, for $\mass \neq 0$,
there is in fact a nontrivial Jordan block in the generalized
eigenspace $-4$, spanned by $d\rho$ and $d\hat{\xi}$.

Consequently, we must revisit the proof of propagation of b-regularity
to radial points (Proposition~4.4 of \cite{radfielddecay}) in this
context.  We undertake this in the following section.

\section{Propagation of b-regularity and module regularity}
\label{sec:prop-b-regul}

\begin{definition}
  \label{def:module}
  Let $\cM\subset \Psib^{1}(M)$ denote the $\Psib^{0}(M)$-module of
  pseudodifferential operators with principal symbol vanishing on the
  radial set $\cR = \{\rho = 0,\ v=0,\ \xi=0,\ \eta=0\}$.  We also let
  $\cMdiff\subset \Diff^{1}(M)$ denote the module of differential
  operators with principal symbol vanishing on the radial set $\cR$.  
\end{definition}

Note that a set of generators for $\cM$ over $\Psib^{0}(M)$ is given
by the vector fields $\rho \pd[\rho]$, $\rho\pd[v]$, $v\pd[v]$,
$\pd[y]$,
and $I$.  The differential module $\cMdiff$ is generated by the same
vector fields over $\CI (M)$.

We recall from~\cite{radfielddecay} that the module $\cM$ is closed
under commutators.  

If we disregard factors in $\cM^{2}$ we note that the operator $L$ defined
by equation~\eqref{eq:Ldef} takes a particularly simple form:
\begin{lemma}
  \label{lemma:LmodM}
  \begin{equation}
    \label{eq:Lmodule}
    L = 4\pd[v]\left( \rho \pd[\rho] + v \pd[v]\right) - 4 \mass \rho
      \pd[v]^{2} + \cM^{2}.
  \end{equation}
\end{lemma}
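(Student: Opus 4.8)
The plan is to start from the explicit b-principal symbol $\lambda$ in \eqref{eq:b-princ-symb} and the frame description of $L$ as a b-differential operator of order $2$, and simply expand $L$ out in the coordinate vector fields $\rho\pd[\rho], \pd[v], \pd[y]$ near $\rho=v=0$, keeping careful track of which terms lie in the differential module $\cMdiff^2$. Recall from Definition~\ref{def:module} that $\cMdiff$ is generated over $\CI(M)$ by $\rho\pd[\rho], \rho\pd[v], v\pd[v], \pd[y], I$, so $\cMdiff^2$ contains all quadratic products of these, together with all first-order operators in $\cMdiff$. Since $\eta=\xi=0$ cuts out (part of) $\cR$, any term of $L$ involving a $\pd[y]$ factor, or a coefficient vanishing at $\rho=v=0$ times a single $\pd[\rho]$ or $\pd[v]$, will land in $\cMdiff^2$; the content of the lemma is that after discarding all such terms only the displayed two survive.

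Concretely, the main step is to read off from \eqref{eq:b-princ-symb} that, modulo operators of order $\le 1$ and modulo symbols vanishing quadratically on $\cR$, the second-order part of $L$ is
$$
g^{\rho\rho}(\rho\pd[\rho])^2 - 2\big(2 - \alpha v + O(\rho)\big)(\rho\pd[\rho])\pd[v] - \big(4v - 4\mass\rho - \beta v^2 + O(\rho v) + O(\rho^2)\big)\pd[v]^2 + (\text{terms with }\pd[y]).
$$
Each $\pd[y]$-bearing term in the full operator has the form (smooth)$\cdot \pd[y]\cdot(\text{b-vector field})$ or (smooth)$\cdot(\rho\pd[\rho]\text{ or }\pd[v])\cdot\pd[y]$, hence lies in $\cMdiff\cdot\cMdiff = \cMdiff^2$. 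Likewise $g^{\rho\rho}(\rho\pd[\rho])^2 \in \cMdiff^2$ outright since $\rho\pd[\rho]\in\cMdiff$; and the $O(\rho)$ and $O(v)$ corrections to the coefficient of $(\rho\pd[\rho])\pd[v]$ combine with $\pd[v]$ to give $O(\rho)\pd[v] = O(1)\cdot\rho\pd[v]\in\cMdiff$ (a first-order module element, hence in $\cMdiff^2$ by the filtration convention) resp.\ $O(v)\pd[v]\in\cMdiff$. The same reasoning handles the $\beta v^2$, $O(\rho v)$, $O(\rho^2)$ corrections to the $\pd[v]^2$ coefficient: e.g.\ $v^2\pd[v]^2 = (v\pd[v])^2 + (v\pd[v])\in\cMdiff^2$, and $\rho v\pd[v]^2 = (\rho\pd[\rho])$-free but equals $\rho\pd[v]\cdot v\pd[v] + (\text{l.o.t.})\in\cMdiff^2$, etc. What remains is $-2\cdot 2\cdot(\rho\pd[\rho])\pd[v] - 4v\pd[v]^2 + 4\mass\rho\pd[v]^2$, and since $4v\pd[v]^2 = 4\pd[v]\cdot v\pd[v] + (\text{l.o.t.})$ — the lower-order commutator term $-4\pd[v]$ itself being in $\cMdiff^2$? — one rearranges to $4\pd[v](\rho\pd[\rho] + v\pd[v]) - 4\mass\rho\pd[v]^2$; one must double-check the sign and that $g^{\rho\rho}=\omega$ genuinely contributes nothing outside $\cMdiff^2$, which it does.

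The one subtlety worth care is the ordering/commutator bookkeeping: passing between, say, $v\pd[v]^2$ and $\pd[v](v\pd[v])$ and $(v\pd[v])\pd[v]$ generates first-order error terms, and one must confirm these all lie in $\cMdiff^2$ rather than merely in $\cMdiff$ — but the filtered-algebra convention (\eqref{exampleboperator} and the definition of $\cMdiff$) makes $\cMdiff\subset\cMdiff^2$ trivially, since $\cMdiff^m$ is defined as the $m$-fold module product and lower powers inject into higher ones. So this is genuinely routine. The only other place to be slightly careful is that the $O(\rho)$ term multiplying $\xi\gamma$ in \eqref{eq:b-princ-symb}, when quantized, could a priori produce a term like $\rho\pd[\rho]\pd[v]$ with an extra $\rho$ that one might want in $\cMdiff^2$: indeed $\rho\cdot\rho\pd[\rho]\cdot\pd[v] = \rho\pd[\rho]\cdot\rho\pd[v] + (\text{l.o.t.})\in\cMdiff^2$, so it is fine.

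I expect no real obstacle here — the lemma is essentially a repackaging of \eqref{eq:b-princ-symb}, and the ``hard part'' is merely the disciplined verification that every discarded term is manifestly a product of two module generators (or a single lower-order one). The payoff, which is why the lemma is stated this way, is that the displayed normal form $4\pd[v](\rho\pd[\rho]+v\pd[v]) - 4\mass\rho\pd[v]^2$ makes transparent the structure of $L$ near $S_+$ that drives the radial-point propagation estimate of Section~\ref{sec:prop-b-regul} and, after the logarithmic coordinate change of Section~\ref{sec:logification} (under which $v\mapsto v + m\rho\log\rho$ is designed precisely to kill the $-4\mass\rho\pd[v]^2$ term to leading order), the subsequent asymptotic analysis.
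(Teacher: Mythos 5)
There is a genuine gap, and you have in fact put your finger on it yourself with the parenthetical ``the lower-order commutator term $-4\pd[v]$ itself being in $\cMdiff^2$?'' The answer is no: $\pd[v]$ is precisely the one coordinate vector field \emph{not} in the module (only $\rho\pd[v]$ and $v\pd[v]$ are generators), and its symbol $\gamma$ does not vanish on $\cR$, so $\pd[v]\notin\cM\subset\cM^{2}$ and no appeal to the filtration $\cMdiff\subset\cMdiff^{2}$ rescues it. This is fatal for a proof that works only ``modulo operators of order $\le 1$'': the two operators $4\pd[v]\left(\rho\pd[\rho]+v\pd[v]\right)$ and $4\left(\rho\pd[\rho]+v\pd[v]\right)\pd[v]$ have the same principal symbol but differ by $4[\pd[v],v\pd[v]]=4\pd[v]\notin\cM^{2}$, so the lemma — whose whole content is the precise operator ordering with $\pd[v]$ on the left — simply cannot be read off from \eqref{eq:b-princ-symb}. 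You must control the genuine subprincipal (first-order) part of $L$, which the principal symbol does not see.

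The paper closes exactly this gap in two steps. First, it replaces $L=\rho^{-(n-2)/2-2}\Box_{g}\rho^{(n-2)/2}$ by $\Box_{g_{b}}$ with $g_{b}=\rho^{2}g$: both operators have the same principal symbol and both are formally self-adjoint with respect to the b-density $\rho^{n}\sqrt{g}$, and a real, formally self-adjoint first-order operator is necessarily of order zero; hence $L-\Box_{g_{b}}$ is a smooth multiplication operator, which lies in $\cM^{2}$ since $I\in\cM$. Second, it writes $\Box_{g_{b}}$ in divergence form, where the first-order coefficients are explicitly determined by $\pd[i]\left(g_{b}^{iv}\sqrt{g_{b}}\right)/\sqrt{g_{b}}$; using $g_{b}^{vv}=-4v+4\mass\rho+\cdots$, $g_{b}^{\rho v}=-2\rho+O(\rho v)$, $g_{b}^{vy}=O(v)$ and $\sqrt{g_{b}}=\rho^{-2}A$, these first-order terms combine with the second-order ones to yield exactly the left-ordered product $4\pd[v]\left(\rho\pd[\rho]+v\pd[v]\right)$ plus the mass term, modulo $\cM^{2}$. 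Your bookkeeping of which \emph{second-order} terms fall into $\cM^{2}$ (the $\pd[y]$ terms, $g^{\rho\rho}(\rho\pd[\rho])^{2}$, the $O(v)$ and $O(\rho)$ corrections, etc.) is correct and matches the paper's; what is missing is any mechanism for determining the first-order part, and without one the proof does not go through.
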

\begin{proof}
  As in the previous section, we let $g_{b}$ denote the induced
  b-metric given by $\lambda,$ so that $g_{b} = \rho^{2}g$.  We
  observe that $L$ and $\Box_{g_{b}}$ have the same principal symbol
  and are both self-adjoint with respect to the volume $\rho^n
  \sqrt{g},$ hence these operators agree up to a smooth zero-th order
  term (which is automatically in $\cM^{2}$).  We must thus show that
  $\Box_{g_{b}}$ has the desired form.

  To see this we start by noting that $\Box_{g_{b}}$ is an element of
  $\Diffb^{2}(M)$ and so the only terms of $\Box_{g_{b}}$ not lying in
  $\cM^{2}$ are those terms containing a $\pd[v]$ (because
  $\rho\pd[\rho]$ and $\pd[y]$ lie in $\cM$).  We then observe that
  \begin{align*}
    g_{b}^{vy} &= O(v), \quad g_{b}^{\rho v} = -2\rho + O(\rho v),\\
    g_{b}^{vv} &= -4v + 4\mass \rho + O(v^{2}) + O(\rho v) + O(\rho^{2}).
  \end{align*}
  Because $\sqrt{g_{b}} = \rho^{-2} A$, where $A$ is smooth and
  non-vanishing, it follows that $\Box_{g_{b}}$ (and hence $L$) has
  the desired form.
\end{proof}

We begin by recalling, just as in \cite{radfielddecay}, that
regularity/singularities of solutions to $L w=f \in \dCI(M)$
propagates along maximally extended integral curves of the Hamilton
vector field for a wide class of operators $L$:
let $L\in\Psib^{s,r}(M)$ be arbitrary, and let
$\Sigma\subset\Sb^*M$ denote the characteristic set of $L$, $\lambda$
denote the principal symbol of $L$ in $\Psib^{s,r}(M)$.
\begin{proposition}\label{proposition:bpropagation} Suppose $w \in \Hb^{-\infty,l}(M).$
Then
\begin{enumerate}\item
Elliptic regularity holds away from $\Sigma$, i.e.,
$$
\WFb^{m,l}(w)\subset\WFb^{m-s,l-r}(Lw)\cup\Sigma,
$$
\item In $\Sigma$, $\WFb^{m,l}(w)\setminus\WFb^{m-s+1,r-l}(Lw)$ is a
union of maximally extended bicharacteristics, i.e., integral curves
of $\sH_\lambda$.
\end{enumerate}
\end{proposition}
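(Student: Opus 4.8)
The statement is the standard propagation-of-singularities theorem in the b-calculus, so the plan is to follow the classical positive-commutator argument, with the main nonstandard ingredient being bookkeeping of the second (decay) index $l-r$, which — as the text has emphasized — does \emph{not} improve under commutators because $\Psib(M)$ fails to be commutative to leading order in the decay filtration. First I would dispose of part (1): ellipticity away from $\Sigma$ is the usual parametrix construction in the b-calculus. If $\alpha\notin\Sigma$, choose $Q\in\Psib^{0,0}$ elliptic at $\alpha$ with microsupport disjoint from $\Sigma$; then $L$ is elliptic on $\mathrm{WF}'_\bl(Q)$, so one constructs $G\in\Psib^{-s,-r}$ with $GL = Q + E$, $E\in\Psib^{-\infty,-\infty}$ microsupported near $\alpha$. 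Applying $G$ to the equation and using $Lw$'s assumed regularity yields $Qw\in\Hb^{m,l}$, hence $\alpha\notin\WFb^{m,l}(w)$ as soon as $\alpha\notin\WFb^{m-s,l-r}(Lw)$.

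For part (2), I would run the Hörmander-style commutator argument microlocally along a bicharacteristic $\gamma(t)$ of $\sH_\lambda$ in $\Sigma$. Fix a point $\alpha$ on $\gamma$ and suppose $\gamma$ near $\alpha$ (on the ``backward'' side, say) avoids $\WFb^{m,l}(w)$ and the whole relevant segment avoids $\WFb^{m-s+1,l-r}(Lw)$; the goal is to propagate the $\Hb^{m,l}$ regularity forward to $\alpha$. The key step is to construct a symbol $a$ (depending on a regularization parameter to make the argument rigorous with a priori only finite regularity), homogeneous of the appropriate order and of decay weight matched to $l$, supported in a small conic neighborhood of the bicharacteristic segment, such that along the flow
\begin{equation*}
\sH_\lambda a = -b^2 + e,
\end{equation*}
where $b$ is elliptic on a neighborhood of $\alpha$ and $e$ is supported in the region where $w$ is already known to be regular. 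Quantizing to $A = \Op(a)$, $B=\Op(b)$, $E=\Op(e)$ and pairing $\langle \imath[L,A^*A]w, w\rangle$ against the equation $Lw = f$, the principal symbol of $\imath[L,A^*A]$ is $\sH_\lambda(a^2)$ up to the weight subtlety — and here I must be careful that the commutator lives in $\Psib^{s-1+2m, r-2l}$ with the weight \emph{not} improved — so after arranging the weight of $a$ so that $A^*A$ maps $\Hb^{m,l}$ to its dual appropriately, the commutator pairing controls $\|Bw\|^2$ by $\|Ew\|^2$ plus error terms involving $f$ and lower-order remainders. The usual regularization (inserting a family of smoothing factors $\varphi_\epsilon$ and passing $\epsilon\to0$, tracking uniform bounds) upgrades the a priori estimate to the genuine conclusion $\alpha\notin\WFb^{m,l}(w)$. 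Running this for $\alpha$ and both flow directions, and noting that the hypothesis is symmetric under time reversal, shows the complement $\WFb^{m,l}(w)\setminus\WFb^{m-s+1,l-r}(Lw)$ is invariant under the flow, i.e.\ a union of maximally extended bicharacteristics.

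The main obstacle — though it is by now well understood in this literature (Melrose--Vasy--Wunsch, Vasy) — is precisely the decay-weight bookkeeping: because the commutator in $\Psib$ does not gain in the second index, the positive-commutator argument only closes if the weight of the test operator $a$ is chosen in the ``sharp'' way dictated by the normal operator, and one must verify that the threshold regularity/decay conditions built into the statement (the $m-s+1$ and $l-r$ on $\WFb$ of $Lw$) are exactly what makes the sign work out. For the generic (non-radial) points of $\Sigma$ treated here this is routine once the weight is tracked correctly; the genuinely delicate radial-point analysis, where the flow is stationary and the sign of the subprincipal/normal-operator term becomes decisive, is deferred to Proposition~\ref{prop:radial-b-estimate}. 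Since $Lw = g \in \dCI(M)$ in our application, all the $\WFb$ sets of $Lw$ are empty and the statement simplifies accordingly, but I would state and prove it in the general form above so it can be cited uniformly.
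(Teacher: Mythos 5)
Your outline is correct and is essentially the argument the paper has in mind: the paper does not prove this proposition but defers to the standard positive-commutator proof in the b-calculus (citing \cite{Vasy:corners}), noting exactly the point you emphasize, namely that b-commutators gain an order in differentiability but not in decay, whence the loss of one derivative but no change in weight. Your sketch matches that standard argument (and your $l-r$ in part (2) is the intended weight; the $r-l$ in the stated proposition is evidently a typo).
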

Note that the order in $\WFb^{m-s+1,r-l}(Lw)$ is shifted by $1$
relative to the elliptic estimates, corresponding to the usual
hyperbolic loss.  This arises naturally in the positive commutator
estimates used to prove such hyperbolic estimates: commutators in
$\Psib(M)$ are one order lower than products in the differentiability
sense (the first index), but not in the decay order (the second
index); hence the change in the first order relative to elliptic
estimates but not in the second.  We refer the reader to, e.g.,
\cite{Vasy:corners}, for a proof; the idea is essentially a version of
the usual real-principal type propagation argument by positive
commutators.

Proposition~\ref{proposition:bpropagation} by itself fails to give any
useful information exactly at $\cR,$ the radial set of $L$.   To
analyze the solutions at $\cR$ we require a considerably subtler
result that yields propagation into and out of the radial set
but which is sensitive to the order of Sobolev regularity under
study.  The statement below is thus \emph{only about the particular operator
$L$} under study here, as it depends in detail on the behavior near $\cR.$
Our result here has the same statement as Proposition~4.4 of
\cite{radfielddecay} but as noted above is complicated by the
existence of a nontrivial Jordan block in the linearization of the
Hamilton vector field about $\cR$ in the long-range case considered
here.

\begin{proposition}
  \label{prop:radial-b-estimate}
  Let $L= \rho^{-(n-2)/2-2}\Box_{g}\rho^{(n-2)/2}$.  If $w \in
  \Hb^{-\infty,l}(M)$ for some $l$, $Lw \in \Hb^{m-1, l}$, and $w \in
  \Hb^{m,l}$ on a punctured neighborhood $U\setminus \pd\cR$ of $\pa
  \cR$ in $\Sb^{*}M$ (i.e., $\WFb^{m,l}(w)\cap (U \setminus\pa\cR) =
  \emptyset$) then for $m' \leq m$ with $m'+l < 1/2$, $w\in
  \Hb^{m',l}(M)$ at $\pa\cR$ (i.e., $\WFb^{m',l}(w)\cap \pa\cR =
  \emptyset$) and for $N \in \NN$ with $m' + N \leq m$ and for $A \in
  \cM^{N}$, $Aw$ is in $\Hb^{m',l}(M)$ at $\pa \cR$ (i.e.,
  $\WFb^{m',l}(Aw)\cap \pa\cR = \emptyset$).
\end{proposition}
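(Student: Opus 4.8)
The plan is to run the standard positive-commutator (radial-point) argument of Melrose, as adapted in \cite{radfielddecay} and \cite{Vasy-Dyatlov:Microlocal-Kerr}, but to track carefully the effect of the nontrivial Jordan block in the linearization of $\nu\sH_\lambda$ at $\pa\cR$ discovered in Section~\ref{sec:form-hvf}. First I would choose a commutant $A=\Op(a)$ with $a=\rho^{-2l}\chi(\lambda/\rho^{?})\cdots$; more precisely, following the template, $a^2$ should be a function that is a large negative power of a boundary defining function $\nu$ of the radial set inside $\Sb^*M$, cut off to a small conic neighborhood of $\pa\cR$ by a function of the ``distance'' coordinates $\rho,\nu,\hat\eta,\hat\xi+v-m\rho$ which the eigenvalue computation shows all have negative eigenvalue (i.e.\ are attracted, under the rescaled flow $\nu\sH_\lambda$, toward $\pa\cR$ as the flow parameter tends to one direction). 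The key positivity input is that the ``normal'' eigenvalues are all strictly negative ($-8$ and $-4$), so that the weight $\nu$-power and the $\rho$-weight both contribute terms of a definite sign in $\sH_\lambda a^2$, while the threshold condition $m'+l<1/2$ is exactly what is needed to make the sign of the subprincipal/weight contribution work out — this is the usual ``below threshold regularity'' side of a radial point estimate, where one \emph{propagates into} the radial set from a punctured neighborhood.

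The main new point — and the step I expect to be the principal obstacle — is the Jordan block in the generalized $-4$ eigenspace spanned by $d\rho$ and $d\hat\xi$. In the short-range case the linearization is diagonalizable and the commutant can be built as a product of functions of the genuine eigen-coordinates; here $\hat\xi$ (or rather $\hat\xi+v-m\rho$, adapted to the $-8$ eigenvector) and $\rho$ are coupled. The remedy is to choose the localizing factor not as a function of $\rho$ and $\hat\xi$ separately but of a ``Jordan-adapted'' quantity, e.g.\ $\hat\xi$-type coordinate plus a multiple of $\rho\log\rho$ or of $\rho$ times a logarithmic weight, so that along the flow the pair $(\rho,\hat\xi)$ still contracts to $\pa\cR$ at the expense of only a logarithmic loss; concretely one checks that $\nu\sH_\lambda$ applied to $\rho$ is $-4\rho$ while applied to $\hat\xi$ it is $-4\hat\xi-4m\rho$ (mod quadratic terms), so the combination $\hat\xi + m\rho\log\rho^{-1}$ or a similar choice is contracted cleanly. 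One then verifies that the extra $\log$ factors thus introduced into $a$ are harmless: they are bounded on the relevant region and their derivatives along $\sH_\lambda$ produce terms of strictly lower order or of the good sign. This is the analogue of the change of variables $v\rightsquigarrow v+m\rho\log\rho$ flagged in the introduction, now performed at the symbolic level on the commutant.

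With the commutant in hand the rest is routine: compute $\imath[L,A^*A]$, use Lemma~\ref{lemma:LmodM} to see that the ``bad'' $-4m\rho\pd[v]^2$ term and the $\cM^2$ error lie in the module and hence contribute terms absorbable into the module-regularity bootstrap, pair against $w$, and regularize by inserting an extra factor $(1+\epsilon\nu^{-1})^{-N}$ and letting $\epsilon\downarrow 0$, using the a priori hypothesis $\WFb^{m,l}(w)\cap(U\setminus\pa\cR)=\emptyset$ to control the term supported where $\sH_\lambda$ of the cutoff is nonzero (this is where the punctured-neighborhood assumption enters, exactly as in Proposition~\ref{proposition:bpropagation}). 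That yields $w\in\Hb^{m',l}$ at $\pa\cR$ for $m'+l<1/2$. The module-regularity statement, $Aw\in\Hb^{m',l}$ for $A\in\cM^N$ with $m'+N\le m$, then follows by the standard inductive argument of \cite{radfielddecay}: since $\cM$ is closed under commutators and $L$ has the form \eqref{eq:Lmodule} modulo $\cM^2$, one commutes generators of $\cM$ through $L$, reduces the module order by one at the cost of one order of Sobolev regularity, and applies the scalar estimate just proved at each stage; the only modification from the short-range argument is bookkeeping of the logarithmic factors from the Jordan block, which, being bounded, do not affect membership in $\Hb^{m',l}$.
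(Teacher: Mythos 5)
Your scalar ($N=0$) step matches the paper: the commutant is $a=\rho^{-r}\nu^{-s}\phi^{2}$ with $r+s<0$ (which is exactly the threshold $m'+l<1/2$), the derivative of the cutoff is supported in $U\setminus\pa\cR$ where the a priori hypothesis applies, and one regularizes to justify the pairing. Note that for this scalar estimate the Jordan block is actually harmless --- the commutant depends only on $\rho$, $\nu$ and the localizer, and all normal eigenvalues remain strictly negative --- so your proposed ``Jordan-adapted'' modification of the localizer is not needed there.

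The genuine gap is in the module-regularity step, which is precisely where the Jordan block bites and where you declare the argument ``routine.'' With the smooth generators (quantizations of $\nu^{-1}\hat\eta$, $\nu^{-1}\rho$, $\nu^{-1}\hat\xi$, $\Lambda L$), the commutator $\imath\Lambda[G_{n},L]=\sum_{j}C_{n,j}G_{j}+E_{n}$ has $\sigma_{\bl,0,0}(C_{n,n-1})|_{\cR}\neq 0$: the coupling of $d\rho$ and $d\hat\xi$ produces a term of the \emph{same} module order whose coefficient does not vanish on $\cR$, so it cannot be absorbed into the main positive term by shrinking supports, and the short-range induction of \cite{radfielddecay} breaks exactly here. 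The paper's fix is to run the commutator argument for the whole \emph{system} of operators $W_{\alpha}=\epsilon^{-\alpha_{n-1}}\Op(\sqrt a)^{*}(G^{\alpha})^{*}(G^{\alpha})\Op(\sqrt a)$ with an anisotropic weight $\epsilon^{-\alpha_{n-1}}$: the offending term maps $\alpha$ to $\beta=\alpha+\delta^{n-1}-\delta^{n}$, whose positive term carries weight $\epsilon^{-\alpha_{n-1}-1}$, so Cauchy--Schwarz gains a factor of $\epsilon^{1/2}$ and the term is absorbed for $\epsilon$ small. Your alternative --- replacing $\hat\xi$ by $\hat\xi+m\rho\log\rho^{-1}$ to diagonalize the linearization --- does produce a genuine eigenvector modulo quadratic terms, but it takes you outside the smooth module $\cM$ for which the proposition is stated (you would prove regularity under the logified module instead, which is what Section~\ref{sec:logification} introduces only later, and only after establishing this proposition in the smooth calculus), and your claim that the resulting $\log\rho$ factors ``being bounded, do not affect membership in $\Hb^{m',l}$'' is false near $\rho=0$. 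As written, the inductive step for $N\geq 1$ is not closed.
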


We sketch the proof, focusing on the differences from
\cite{radfielddecay}.

\begin{proof}
  First we show propagation of ordinary b-regularity up to the
  threshold regularity $m'$.  We inductively show that
  $\WFb^{\tilde{m},l}(w) \cap \pd \cR = \emptyset$ assuming that we
  already have shown $\WFb^{m'', l}(w) \cap \pd \cR = \emptyset$ with
  $m'' = \tilde{m} - 1/2$.  As $w\in \Hb^{m_{0}, l}(M)$ for some
  $m_{0}$, we start with $\tilde{m} = \min (m_{0} + 1/2, m')$ and then,
  increasing $\tilde{m}$ by an amount $\leq 1/2$ at each step, we may
  reach $\tilde{m} = m'$ in finitely many steps.  

  To do this, we set
  \begin{equation*}
    a = \rho ^{-r}\nu^{-s}\phi^{2},
  \end{equation*}
  where $\phi \geq 0$, $\phi \equiv 1$ near $\cR$ and $\supp \phi
  \subset U$.  Taking $r + s < 0$ and constraining the support of
  $\phi$ appropriately gives
  \begin{equation*}
    \nu \sH_{\lambda}a = - b^{2} + e,
  \end{equation*}
  with $b$ elliptic near $\cR$ and $e$ supported on $\supp d\phi$,
  which we choose to be away from $\cR$.  Choosing $A \in
  \Psib^{s,r}(M)$ with symbol $a$ then gives
  \begin{equation*}
    \imath [L, A] = - B^{*}B + E + F,
  \end{equation*}
  with $E \in \Psib^{s+1,r}(M)$ microsupported away from $\cR$, $B \in
  \Psib^{(s+1)/2, r/2}(M)$, and $F \in \Psib^{s,r}(M)$.  Hence we have
  an estimate
 \begin{equation}
   \label{eq:propagation1}
   \norm{Bw}^{2} \leq \left| \langle Ew, w\rangle\right| + \left|
     \langle Fw, w\rangle\right| + 2 \left| \langle Lw, Aw\rangle\right|
 \end{equation}
 when $w$ is a priori sufficiently regular.  Given $\tilde{m}, l$, we
 take $s = 2\tilde{m} - 1$ and $r = 2l$ so that $s + r < 0$ is
 satisfied.  As $F$ has order $\leq 2m''$, the inductive assumption
 gives a bound on $\langle Fw, w\rangle$.  A standard regularization
 argument to justify the pairing then proves the proposition for
 $N=0$.

 Now we turn to the general case, following the methods of Hassell,
 Melrose, and Vasy~\cite{Hassell-Melrose-Vasy:Spectral,
   Hassell-Melrose-Vasy:Microlocal} (cf.\ also the appendix
 of~\cite{Melrose-Vasy-Wunsch:Propagation}).  In particular, we follow
 the treatment of Section 6 of~\cite{Hassell-Melrose-Vasy:Spectral}, which covers the
 propagation of regularity under test modules into and out of radial points.
 
 Thus as generators of the module we may take quantizations of
 $\nu^{-1}g$, where $g$ runs over the set
 \begin{equation}\label{generators}
   \{ \hat{\eta}, \rho, \hat{\xi}, \nu^{2}\lambda\}.
 \end{equation}
 Recall that $d\hat{\eta},\ d\rho$ are eigenvectors of the
 linearization of $\nu \sH_{\lambda}$ with eigenvalue $-4$ while
 $d\hat{\xi}$ lies in the same generalized eigenspace.  

Now let $G_{0} = I$ and let $G_{1},
 \dots, G_{n-1}$ be given by quantizing $\nu^{-1}\hat{\eta}$ and
 $\nu^{-1}\rho$; let $G_{n}$ be the quantization of
 $\nu^{-1}\hat{\xi}$ and let $G_{n+1} = \Lambda L$.  Here $\Lambda\in
 \Psib^{-1}$ has symbol $\nu$ near $\cR$.  We employ the obvious
 multi-index notation for $G^{\alpha}$.

 Since $d\nu,\ d\hat{\eta},\ d\rho$ have equal eigenvalues for $1 \leq j \leq
 n-1$, we have
 \begin{equation*}
   \imath \Lambda [G_{i}, L] = \sum_{j=1}^{n+1}C_{ij}G_{j}+E_i,
 \end{equation*}
 where $E_i \in \Psib^{-\infty}(M)$ and for $i \leq n-1$,
 \begin{equation}
   \label{eq:vanishingsymbol}
   \sigma_{\bl, 0, 0}(C_{ij})|_{\cR} = 0.
 \end{equation}
 By contrast,
 \begin{equation*}
   \imath \Lambda [G_{n}, L] = \sum_{j}C_{n,j}G_{j}+E_{n},
 \end{equation*}
 with $E_{n} \in \Psib^{-\infty}(M)$ and 
 \begin{equation}
   \label{eq:notquitevanishingsymbol}
   \sigma_{\bl,0,0}(C_{n,j})|_{\cR} =0, \ j\neq n-1;
 \end{equation}
 \emph{the term $C_{n,(n-1)}$ will not enjoy this vanishing property,
   however.}

 We now inductively control regularity under $\cM^{N}$, with $N = 0$
 being the case established above.  In proving regularity of $w$ under
 $\cM^{N}$ given regularity under $\cM^{N-1}$, we recall that it
 suffices to consider the application of elements $G^{\alpha}$ with
 $|\alpha | = N$ and with $\alpha_{n+1} = 0$, since the presence of a
 single factor of $G_{n+1}$ in the correct slot renders $w$ residual.
 (We can arrange that factors of $G_{n+1}$ are always in the correct
 slot as the induction hypothesis allows us to bound the commutators.)
 
 We thus consider the \emph{system} of commutators
 \begin{equation*}
   \imath [L, W_{\alpha}],
 \end{equation*}
 with
 \begin{equation*}
   W_{\alpha} = \epsilon^{-\alpha_{n-1}} \Op (\sqrt{a})^{*}(G^{\alpha})^{*}(G^{\alpha})\Op(\sqrt{a}),
 \end{equation*}
 where $a$ is chosen as above, $\epsilon > 0$ is small (to be fixed
 later), and where we let $\alpha$ run over all values with
 \begin{equation*}
   \smallabs{\alpha}= N, \alpha_{n+1} = 0.
 \end{equation*}

 As before, since $s+r < 0$ (and if the support of $\phi$ is
 sufficiently small), we have
 \begin{align}
   \label{eq:Jordanblockcommutators}
   \imath[L, W_{\alpha}] &= -
   \epsilon^{-\alpha_{n-1}}B^{*}(G^{\alpha})^{*}(G^{\alpha})B \notag \\
   &+ \sum_{\beta }
   \epsilon^{-\alpha_{n-1}}\Op(\sqrt{a})^{*}\left((G^{\beta})^{*}C_{\alpha\beta}(G^{\alpha})
     + (G^{\alpha})^{*}C_{\alpha\beta}'(G^{\beta})\right)\Op(\sqrt{a}) 
   \\
   &+ E_{\alpha} + F_{\alpha} \notag
 \end{align}
 Here the terms involving the $G^{\beta}$ (and adjoint) arise from the
 commutators of $L$ with the $G^{\alpha}$ (and adjoint) factors; $B$
 is elliptic near $\cR$, as before.  $E_\alpha$ is microsupported away
 from $\cR$, and $F_\alpha$ has lower order.
 Crucially, the vanishing of the
 symbols of $C_{ij}$ on $\cR$ imply that
 \begin{equation*}
   \sigma_{\bl,0,0}(C_{\alpha\beta}) = 0, \ \sigma_{\bl, 0,
     0}(C'_{\alpha\beta})=0,\ \text{on}\ \cR \text{ unless }\beta =
   \alpha + \delta^{n-1} - \delta^{n},
 \end{equation*}
 where $\delta ^{j}$ is the multi-index with $\delta^{j}_{i} = 0$ for
 $i \neq j$, $\delta^{j}_{j} = 1$.  Now on pairing
 equation~\eqref{eq:Jordanblockcommutators} with $w$, we note that:
 \begin{itemize}
 \item Terms with $\beta_{n+1} \neq 0$ are trivially bounded because
   $Lw$ is residual.  
 \item Terms with $\smallabs{\beta}< N$ can be absorbed in the positive terms
   by the inductive hypothesis and Cauchy--Schwarz.
 \item Terms with $\smallabs{\beta} = \smallabs{\alpha}$ can likewise
   be absorbed in the main positive terms unless $\beta = \alpha +
   \delta^{n-1} - \delta^{n}$ by the vanishing of the symbol
   (shrinking supports if necessary).
 \item Terms with $\beta = \alpha + \delta^{n-1} - \delta^{n}$ can be
   likewise handled by Cauchy--Schwarz, as they come
   with a coefficient $\epsilon^{-\alpha_{n-1}}$, while the
   corresponding positive term has coefficient $\epsilon^{-\beta_{n-1}}
   = \epsilon^{-\alpha_{n-1} -1}$.  Hence for $\epsilon$ sufficiently
   small, these terms, too, may be controlled by the main commutator terms.
 \end{itemize}
\end{proof}

\section{Fredholm properties}\label{sec:Fredholm}

We now turn to the Fredholm properties of the operator family
$P_\sigma$ on variable-order Sobolev spaces, which we can deduce from
the propagation theorems above.  This argument is identical to that
used in \cite{radfielddecay}, again following the strategy first used
by the second author in \cite{Vasy-Dyatlov:Microlocal-Kerr}.

\begin{definition}
  \label{def:holo}
  Let $\CC_\nu$ denote the halfspace $\Im \sigma>-\nu$ and let
  $\hol(\CC_\nu)$ denote holomorphic functions on this space.  For a
  Fr{\'e}chet space $\mathcal{F}$, let $$\hol(\CC_\nu)\cap \langle
  \sigma\rangle^{-k} L^\infty L^{2}(\reals ;\mathcal{F})$$ denote the
  space of $g_\sigma$ holomorphic in $\sigma \in \CC_\nu$ taking values in $\mathcal{F}$
  such that each seminorm
  $$
  \int_{-\infty}^\infty \norm{g_{\mu+\imath\nu'}}_\mathcal{\bullet}^2 \langle \mu
  \rangle^{2k} \, d\mu
  $$
  is uniformly bounded in $\nu'>-\nu$.
\end{definition}
Note the choice of signs: as $\nu$ increases, the halfspace gets
larger.

We will further allow elements of $\hol (\CC_{\nu})$ to take values in
\emph{$\sigma$-dependent} Sobolev spaces, or rather Sobolev spaces
with $\sigma$-dependent norms.  In particular, we allow
values in the standard semiclassical Sobolev spaces $H^{m}_{h}$ on a
compact manifold (without boundary), with semiclassical parameter $h =
\langle\sigma\rangle^{-1}$. Recall (see \cite[Section 8.3]{Zworski}) that these are the standard Sobolev spaces
and up to the equivalence of norms, for $h$ in compact subsets of
$(0,\infty)$, the norm is just the standard $H^m$ norm, but
the norm is $h$-dependent: for non-negative integers $m$, in
coordinates $y_j$, locally the norm $\|g\|_{H^m_h}$ is equivalent to
$\sqrt{\sum_{\smallabs{\alpha}\leq m}
  \|(hD_{y_j})^\alpha g\|^2_{L^2}}$.

We let $P_{\sigma}$ be the normal operator for the conjugated operator
$L = \rho^{-(n-2)/2}\rho^{-2}\Box_{g}\rho ^{(n-2)/2}$.  Recall that
under our global assumptions, the characteristic set of $P_{\sigma}$
in $S^{*}X$ has two parts $\Sigma_{\pm}$ such that the integral curves
of the Hamilton flow in $\Sigma_{\pm}$ tend to $S_{\pm}$ as the
parameter tends to $+\infty$.  

We recall now from~\cite{radfielddecay} that the radial points of the
Hamilton vector field of $P_{\sigma}$ (an operator on $X = \pd M$)
occur when $v=0$.  Indeed, they occur at
\begin{equation}
  \label{eq:Psigma-rad-set}
  \Lambda^{\epsilon_{1}}_{\epsilon_{2}} = \{ v= 0 , \eta = 0,
  \epsilon_{2}\gamma > 0\} \cap \Sigma_{\epsilon_{1}} \subset
  T^{*}X, \quad \epsilon_{i} = \pm,
\end{equation}
so that the $\pm$ in the superscript distinguishes ``past'' from
``future'' null infinity, while that in the subscript separates the
intersections with the two components of the characteristic set.  The
past and future radial sets are denoted $\Lambda^{\pm} =
\Lambda^{\pm}_{+}\cup \Lambda ^{\pm}_{-}$.  

The operator family $P_{\sigma}$ is Fredholm on appropriate
variable-order Sobolev spaces, which we now recall.  Let
$\bar{s}^{\pm}(\sigma)$ denote the threshold Sobolev exponents at
$\Lambda^{\pm}$, i.e., at the future and past radial sets.
From~\cite{radfielddecay}, we recall that in fact
\begin{equation*}
  \bar{s}^{\pm}(\sigma) = \frac{1}{2} + \Im \sigma.
\end{equation*}
Now let $s_{\tow}$ be a function on $S^{*}X$ so that
\begin{enumerate}
\item $s_{\tow}$ is constant near $\Lambda^{\pm}$,
\item $s_{\tow}$ is decreasing along the $\sH_{p}$-flow on
  $\Sigma_{+}$ and increasing on $\Sigma_{-}$,
\item $s_{\tow}$ is less than the threshold exponents at
  $\Lambda^{+}$, towards which we propagate our estimates, i.e.,
  $s_{\tow}|_{\Lambda^{+}}< \bar{s}^{+}(\sigma)$, and
\item $s_{\tow}$ is greater than the threshold value at $\Lambda^{-}$,
  away from which we propagate our estimates.  
\end{enumerate}
We also require a function $s_{\away}^{*}$ on $S^{*}X$ satisfying the
above assumptions for $P_{\sigma}^{*}$ with $\bar{s}^{\pm,*}(\sigma) =
- \bar{s}^{\pm}(\sigma) + 1$, and may thus take $s^{*}_{\away} = -
s_{\tow} + 1$ so that
\begin{equation*}
  (H^{s_{\tow}})^{*} = H^{s_{\away}*-1}, \quad (H^{s_{\tow}-1})^{*} = H^{s_{\away}^{*}}.
\end{equation*}
As $\Im\sigma$ decreases, the constant value $s(S_{+})$
assumed by $s_{\tow}$ near $S_{+}$ must satisfy $s(S_{+}) <
\frac{1}{2} + \Im \sigma$.  Because we are ultimately interested in
functions that are identically zero near $S_{-}$, we may typically
choose $s_{\tow}$ and $s_{\away}$ so that they are constant on the
support of our functions.

For $U\in H^{s_{\tow}}$ near $\Lambda^{-}$, propagation of regularity
from $\Lambda^{-}$ to $\Lambda^{+}$ yields estimates of the form
\begin{equation*}
  \Norm[H^{s_{\tow}}]{U} \leq C \left(
    \Norm[H^{s_{\tow}-1}]{P_{\sigma}U} + \Norm[H^{-N}]{U}\right),
\end{equation*}
with similar estimates holding for $P_{\sigma}$.  We may thus obtain
Fredholm properties for $P_{\sigma}$ and $P_{\sigma}^{*}$ by changing
the spaces slightly.  We set
\begin{equation*}
  \cY^{s_{\tow}-1} = H^{s_{\tow}-1}, \quad \cX ^{s_{\tow}} = \{ U \in
  H^{s_{\tow}}: P_{\sigma}U \in \cY^{s_{\tow}-1}\}.
\end{equation*}
(Recall that the last statement in the definition of $\cX^{s_{\tow}}$
depends only on the principal symbol of $P_{\sigma}$, which is
independent of $\sigma$.)

The following proposition then holds for $P_{\sigma}$:
\begin{proposition}[\cite{radfielddecay}, Proposition 5.1]
  \label{prop:fredholm}
  The family of maps $P_{\sigma}$ enjoys the following properties:
  \begin{enumerate}
  \item $P_{\sigma}: \cX^{s_{\tow}}\to\cY^{s_{\tow}-1}$ and
    $P_{\sigma}^{*}: \cX^{s_{\away}^{*}}\to \cY^{s_{\away}^{*}-1}$ are
    Fredholm maps.
  \item $P_{\sigma}$ is a holomorphic Fredholm family on these spaces
    in
    \begin{equation*}
      \CC_{s_{+}, s_{-}} = \{ \sigma \in \CC \mid s_{+} <
      \bar{s}^{+}(\sigma), s_{-} > \bar{s}^{-}(\sigma)\},
    \end{equation*}
    with $s_{\tow}|_{\Lambda^{\pm}} = s_{\pm}$.  $P_{\sigma}^{*}$ is
    antiholomorphic in the same region.  
  \end{enumerate}
\end{proposition}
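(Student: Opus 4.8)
The plan is to deduce Proposition~\ref{prop:fredholm} from the microlocal estimates of Section~\ref{sec:prop-b-regul}, specialized to the operator $P_\sigma$ on the closed manifold $X = \pa M$, following \cite{radfielddecay} and the Fredholm framework introduced by the second author in \cite{Vasy-Dyatlov:Microlocal-Kerr}. The first point to record is that $P_\sigma = \widehat{N}(L)(\sigma)$ is independent of the mass $\mass$: since $\mass$ enters the dual metric only at order $O(\rho)$ (cf.\ \eqref{eq:inversemetriccomponents}), the reduced normal operator at $\rho = 0$ does not see it. Consequently the microlocal inputs needed below --- Proposition~\ref{proposition:bpropagation}, the radial-point estimates of Proposition~\ref{prop:radial-b-estimate} (and their above-threshold companions at the radial sets of $P_\sigma$), and microlocal elliptic regularity --- are exactly the $\mass=0$ ones already at our disposal, and the argument runs as in the short-range case.

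First I would assemble an a priori estimate for $P_\sigma$ on the variable-order space $H^{s_\tow}$ by patching together: (i) microlocal elliptic regularity away from $\Sigma = \Sigma_+ \cup \Sigma_-$; (ii) the radial-point estimate at the past radial set $\Lambda^-$, where $s_\tow > \bar{s}^-(\sigma) = \frac{1}{2} + \Im\sigma$, which yields regularity at $\Lambda^-$ and lets us propagate out of it; (iii) real-principal-type propagation along the $\sH_p$-bicharacteristics, which by the non-trapping hypothesis run from $\Lambda^-$ to $\Lambda^+$ inside each $\Sigma_\pm$, with $s_\tow$ monotone in the direction required on each component; and (iv) the radial-point estimate at the future radial set $\Lambda^+$, where $s_\tow < \bar{s}^+(\sigma)$, which allows $U$ microlocally near $\Lambda^+$ to be absorbed into $P_\sigma U$ together with a relatively compact error. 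After the usual regularization argument justifying the positive-commutator pairings for distributions of a priori insufficient regularity, this yields
\[
\Norm[H^{s_\tow}]{U} \le C\left( \Norm[H^{s_\tow-1}]{P_\sigma U} + \Norm[H^{-N}]{U} \right),
\]
and, symmetrically, the estimate for $P_\sigma^*$ on $H^{s_\away^*}$: there the Hamilton flow is reversed, and with $s_\away^* = -s_\tow + 1$ the weight sits above the adjoint threshold $\bar{s}^{\pm,*}(\sigma) = -\bar{s}^\pm(\sigma) + 1$ at the radial set we propagate away from and below it at the one we propagate into.

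Next I would package these estimates functionally. For $N$ large the inclusion $H^{s_\tow} \hookrightarrow H^{-N}$ is compact on the closed manifold $X$, so the first estimate shows that $P_\sigma \colon \cX^{s_\tow} \to \cY^{s_\tow-1}$ --- bounded on the graph space $\cX^{s_\tow}$, whose definition involves only the $\sigma$-independent principal symbol of $P_\sigma$ --- has finite-dimensional kernel and closed range, and the estimate for $P_\sigma^*$ gives the same for $P_\sigma^* \colon \cX^{s_\away^*} \to \cY^{s_\away^*-1}$. Using $(\cY^{s_\tow-1})^* = H^{s_\away^*}$, any functional annihilating the range of $P_\sigma$ is a distributional solution of $P_\sigma^* \phi = 0$; a microlocal elliptic-and-propagation bootstrap places such a $\phi$ in $\cX^{s_\away^*}$, hence in the finite-dimensional kernel of $P_\sigma^*$, so the cokernel of $P_\sigma$ is finite-dimensional and $P_\sigma$ (and, by the symmetric argument, $P_\sigma^*$) is Fredholm. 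Finally, $P_\sigma$ is polynomial, hence holomorphic, in $\sigma$ with values in the bounded operators $\cX^{s_\tow} \to \cY^{s_\tow-1}$, and the radial-point estimates used above are valid precisely on the region $\CC_{s_+,s_-} = \{\sigma : s_+ < \bar{s}^+(\sigma),\ s_- > \bar{s}^-(\sigma)\}$ with $s_{\tow}|_{\Lambda^\pm} = s_\pm$; analytic Fredholm theory then produces a holomorphic Fredholm family on $\CC_{s_+,s_-}$, with $P_\sigma^*$ antiholomorphic there by conjugation.

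The step I expect to require the most care --- though by now routine in this circle of ideas --- is the bookkeeping of the threshold inequalities and flow directions: one must check that a single monotone weight $s_\tow$ can be chosen simultaneously above threshold at $\Lambda^-$, below threshold at $\Lambda^+$, and monotone along $\sH_p$ in the direction dictated on each of $\Sigma_+$ and $\Sigma_-$, and that the dual choice $s_\away^* = -s_\tow + 1$ meets the mirror conditions for $P_\sigma^*$. This, together with the regularization argument underlying the commutator pairings, is the only genuinely technical point, and it is handled exactly as in \cite[Section~5]{radfielddecay}.
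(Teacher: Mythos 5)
Your proposal is correct and follows essentially the same route as the paper, which itself only sketches the argument (quoting the result from \cite{radfielddecay}, Proposition 5.1): the key observation that $P_\sigma=\widehat N(L)(\sigma)$ is independent of $\mass$, the a priori estimate obtained by propagating from $\Lambda^-$ to $\Lambda^+$ together with its dual, and the standard compactness/duality packaging into Fredholmness on the graph spaces. The only quibble is that ``analytic Fredholm theory'' is not needed for item (2) --- holomorphy of the polynomial family $P_\sigma$ on the fixed spaces plus pointwise Fredholmness already constitutes a holomorphic Fredholm family; analytic Fredholm theory enters later, for the meromorphy of $P_\sigma^{-1}$.
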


Non-trapping versions of the above estimates yield the following
proposition as well:
\begin{proposition}[\cite{radfielddecay}, Proposition 5.2]
  \label{prop:nontrappingestimates}
  If the non-trapping hypothesis holds, then
  \begin{enumerate}
  \item $P_{\sigma}^{-1}$ has finitely many poles in each strip $a <
    \Im \sigma < b$.
  \item For all $a,b$ there exists $V$ such that
    \begin{equation*}
      \Norm[\cY^{s_{\tow}-1}_{|\sigma|^{-1}}\to
      \cX^{s_{\tow}}_{|\sigma|^{-1}}]{P_{\sigma}^{-1}} \leq C \langle
      \Re \sigma\rangle^{-1}
    \end{equation*}
    for $a < \Im \sigma < b$ and $\smallabs{\Re \sigma} > C$.
  \end{enumerate}
\end{proposition}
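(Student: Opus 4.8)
The plan is to combine two ingredients already prepared above: the analytic Fredholm theory, which upgrades the Fredholm statement of Proposition~\ref{prop:fredholm} to meromorphy of $\sigma\mapsto P_\sigma^{-1}$, and a \emph{semiclassical} non-trapping estimate, which controls $P_\sigma^{-1}$ for $\smallabs{\Re\sigma}$ large and thereby prevents poles from accumulating at infinity within a horizontal strip. Throughout, the geometric input is the non-trapping hypothesis of Definition~\ref{def:non-trapping}; as observed after \eqref{eq:inversemetriccomponents}, $P_\sigma$ itself is unchanged by the long-range parameter $\mass$, so this part requires nothing beyond the arguments of \cite{radfielddecay} and \cite{Vasy-Dyatlov:Microlocal-Kerr}.

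First I would fix the strip $a<\Im\sigma<b$ and choose the variable order $s_{\tow}$ (hence the constants $s_\pm=s_{\tow}|_{\Lambda^\pm}$) so that a slightly larger strip $a-\epsilon<\Im\sigma<b+\epsilon$ lies inside the region $\CC_{s_+,s_-}$ of Proposition~\ref{prop:fredholm}; since $\bar s^\pm(\sigma)=\tfrac12+\Im\sigma$, it is enough to take $s_+<\tfrac12+a-\epsilon$ and $s_->\tfrac12+b+\epsilon$, and because $\Im\sigma$ is bounded on the strip the threshold conditions on $s_{\tow}$ (and on $s_{\away}^*$ for $P_\sigma^*$) then hold uniformly there. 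On this connected open set $P_\sigma:\cX^{s_{\tow}}\to\cY^{s_{\tow}-1}$ is a holomorphic family of Fredholm operators; once we know from the next step that it is invertible at one point, the analytic Fredholm theorem gives that $\sigma\mapsto P_\sigma^{-1}$ extends meromorphically to $\CC_{s_+,s_-}$ with finite-rank principal parts at the poles, and in particular that $P_\sigma$ has index $0$ throughout.

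The substantive step is the semiclassical estimate. Rescale by setting $h=\ang{\Re\sigma}^{-1}$ and $\sigma=z/h$, so that $z$ ranges over a compact set with $\Re z$ bounded away from $0$ and $\Im z=h\,\Im\sigma\to 0$. Then $P_{h,z}:=h^2P_{z/h}$ is a semiclassical differential operator on the closed manifold $X$ whose semiclassical principal symbol is obtained from the ($\mass$-independent) b-principal symbol of $L$ by freezing at $\rho=0$ and setting the $\rho\pd[\rho]$-variable equal to $z$; its characteristic set is $\Sigma_+\cup\Sigma_-$ and its radial sets are the $\Lambda^\pm$ of \eqref{eq:Psigma-rad-set}. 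By the non-trapping hypothesis every null bicharacteristic in $\Sigma_+$ (resp.\ $\Sigma_-$) runs from the source $\Lambda^-$ to the sink $\Lambda^+$ in uniformly bounded time. One assembles the usual microlocal estimates: a semiclassical elliptic estimate off $\Sigma_\pm$; a semiclassical radial-point estimate \emph{out of} $\Lambda^-$, valid since $s_{\tow}$ exceeds the threshold there; semiclassical real-principal-type propagation along the flow, uniform by non-trapping; and a semiclassical radial-point estimate \emph{into} $\Lambda^+$, valid since $s_{\tow}$ lies below the threshold there. This yields, for $h$ small,
\[
  \Norm[H^{s_{\tow}}_h]{U}\le C h^{-1}\Norm[H^{s_{\tow}-1}_h]{P_{h,z}U},
\]
the $h^{-1}$ being the semiclassical form of the hyperbolic loss of one derivative, the a priori error $C_N h^N\Norm[H^{-N}]{U}$ being absorbed into the left side once $h$ is small. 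The same holds for $P_\sigma^*$ with weight $s_{\away}^*$, so with the index-$0$ property $P_{h,z}$ is invertible for $h$ small and $\Norm[H^{s_{\tow}-1}_h\to H^{s_{\tow}}_h]{P_{h,z}^{-1}}\le Ch^{-1}$. Undoing the rescaling, $P_\sigma^{-1}=h^2P_{h,z}^{-1}$, which gives $\Norm[\cY^{s_{\tow}-1}_{\smallabs{\sigma}^{-1}}\to\cX^{s_{\tow}}_{\smallabs{\sigma}^{-1}}]{P_\sigma^{-1}}\le Ch=C\ang{\Re\sigma}^{-1}$, i.e.\ part (2), and in particular shows $P_\sigma$ is invertible (no pole) once $\smallabs{\Re\sigma}$ is large, uniformly for $a<\Im\sigma<b$. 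Part (1) is then immediate by compactness: $P_\sigma^{-1}$ is meromorphic on $\CC_{s_+,s_-}$, which contains the closed strip $a\le\Im\sigma\le b$; there are no poles there with $\smallabs{\Re\sigma}>C$; and a meromorphic family has only finitely many poles in the compact remaining region $\{a\le\Im\sigma\le b,\ \smallabs{\Re\sigma}\le C\}$.

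I expect the main obstacle to be precisely the semiclassical estimate of the third paragraph: checking that the semiclassical radial-point estimates hold with exactly the threshold relations built into the choice of $s_{\tow}$ and $s_{\away}^*$, uniformly in $z$, and that the global non-trapping dynamics is genuinely used to carry the estimate from source to sink — this is the step where the hypotheses of Definition~\ref{def:non-trapping} enter quantitatively, everything else being soft functional analysis.
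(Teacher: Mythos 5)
Your proposal is correct and follows essentially the same route as the source: the paper itself only quotes this result from \cite{radfielddecay} (Proposition 5.2), and the proof there is exactly the combination you describe — analytic Fredholm theory on the strip granted by Proposition~\ref{prop:fredholm}, plus uniform semiclassical estimates (elliptic, real-principal-type, and radial-point at $\Lambda^{\pm}$ with the threshold conditions encoded in $s_{\tow}$, $s_{\away}^{*}$) assembled along the non-trapped flow to give invertibility and the $O(\ang{\Re\sigma}^{-1})$ bound for $\smallabs{\Re\sigma}$ large. Your bookkeeping of the $h^{2}$ rescaling against the $h^{-1}$ hyperbolic loss, and the observation that $P_\sigma$ is independent of $\mass$ so no new argument is needed in the long-range case, are both as in the cited proof.
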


Here the spaces with $|\sigma|^{-1}$ subscripts refer to the variable
order versions of the semiclassical Sobolev spaces.  

An inductive argument about the Jordan block structure of
$P_{\sigma}^{-1}$ and the Cauchy integral formula establish the
following lemma as well:
\begin{lemma}[\cite{radfielddecay}, Lemma 8.3]
  \label{lemma:laurent}
  Let $\sigma_{0}$ be a pole of order $k$ of the operator family
  \begin{equation*}
    P_{\sigma}^{-1} : \cY^{s_{\tow}-1} \to \cX^{s_{\tow}}
  \end{equation*}
  and let
  \begin{equation*}
    (\sigma - \sigma_{0})^{-k}A_{k} + (\sigma -
    \sigma_{0})^{-k+1}A_{k-1} + \dots + (\sigma -
    \sigma_{0})^{-1}A_{1} + A_{0}
  \end{equation*}
  denote the Laurent expansion near $\sigma_{0}$, with $A_{0}$ locally
  holomorphic.  If a function $f$ vanishes in a neighborhood of
  $\overline{C_{-}}$, then $A_{\ell}f$ is supported in
  $\overline{C_{+}}$ for $\ell = 1, \dots, k$.  
\end{lemma}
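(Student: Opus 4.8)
The plan is to deduce the support statement from the structure of the equation $P_\sigma U = g$ together with the support properties of the family $P_\sigma^{-1}$ and a uniqueness/energy estimate near $S_-$. First I would recall that $P_\sigma = \widehat N(L)(\sigma)$ is the reduced normal operator of the conjugated wave operator $L$, hence it is (up to conjugation by the Mellin transform) a dilation-invariant wave-type operator on $X$, and its bicharacteristic flow is precisely the flow appearing in the non-trapping hypothesis, with forward flowlines in $\Sigma_+$ tending to $S_+$ and those in $\Sigma_-$ tending to $S_-$. The key geometric input is that for $f$ supported away from $\overline{C_-}$, the operator $P_\sigma^{-1}f$ — which is the unique solution in $\cX^{s_\tow}$ — must itself be supported in $\overline{C_+}$. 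This is essentially a finite-speed-of-propagation / uniqueness statement: any element of $\cX^{s_\tow}$ that solves $P_\sigma U = 0$ and vanishes near $S_-$ must vanish on all of $C_0$ and $C_-$, by propagating the (a priori sufficient) regularity from $\Lambda^-$, where the threshold condition on $s_\tow$ forces triviality, forward along the flow; since $f$ vanishes on $C_0 \cup \overline{C_-}$ and the flow through any point of $C_0 \cup C_-$ originates at $\Lambda^-$ without meeting $\supp f$, we get $U \equiv 0$ there.

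Given this support property of $P_\sigma^{-1}$ itself (which I would either cite from \cite{radfielddecay} or re-derive via the propagation estimates of Section~\ref{sec:Fredholm}), the extension to the Laurent coefficients $A_\ell$, $\ell = 1,\dots,k$, is an argument using the Cauchy integral formula. Write
$$
A_\ell f = \frac{1}{2\pi\imath}\oint_{|\sigma - \sigma_0| = \delta} (\sigma - \sigma_0)^{\ell - 1} P_\sigma^{-1} f \, d\sigma,
$$
for $\ell \geq 1$, where the contour is a small circle around $\sigma_0$ avoiding other poles. Each integrand $P_\sigma^{-1}f$ is, for $\sigma$ on the contour, a function supported in $\overline{C_+}$ by the previous paragraph. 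Since $\overline{C_+}$ is closed and the integral is a (norm-convergent, by the continuity of $\sigma \mapsto P_\sigma^{-1}f$ on the contour) limit of Riemann sums each of which is supported in $\overline{C_+}$, the support of $A_\ell f$ is contained in $\overline{C_+}$ as well. The same contour-integral representation with the subtraction of the principal part gives $A_0$, but that coefficient involves the locally holomorphic part and no support claim is made (nor needed) for it.

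The main obstacle I expect is the first paragraph: making precise and rigorous the claim that $P_\sigma^{-1}f$ is supported in $\overline{C_+}$ whenever $f$ vanishes near $\overline{C_-}$. One has to be careful that this is not literally finite propagation speed for a hyperbolic equation on $\RR_t \times (\text{space})$ but rather its stationary (Mellin-transformed) analog, where the relevant mechanism is the propagation of singularities and of regularity combined with the radial-point estimates: the uniqueness class $\cX^{s_\tow}$ is exactly engineered so that a solution trivial near $\Lambda^-$ stays trivial as we propagate forward, and one must check that "trivial near $S_-$" (vanishing of the Cauchy data in the appropriate sense, which here is automatic since $f$ and hence $U$ is smooth and rapidly vanishing there) does propagate to give vanishing throughout $C_0 \cup C_-$ and not merely microlocal triviality. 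In \cite{radfielddecay} this is handled (this is their Lemma~8.3 in the $m=0$ case), and since, as remarked after \eqref{eq:inversemetriccomponents}, the normal operator $P_\sigma$ is \emph{independent of $m$}, the argument there applies verbatim; so in practice I would simply invoke \cite[Lemma 8.3]{radfielddecay} for the support property of $P_\sigma^{-1}$ and supply only the Cauchy-integral extension to the $A_\ell$.
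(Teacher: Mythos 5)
There is a genuine gap in your first paragraph: the claim that $P_\sigma^{-1}f$ is supported in $\overline{C_+}$ whenever $f$ vanishes near $\overline{C_-}$ is false, and your argument for it silently upgrades the hypothesis ``$f$ vanishes in a neighborhood of $\overline{C_-}$'' to ``$f$ vanishes on $C_0\cup\overline{C_-}$'' (you write exactly this when you assert that the flow through any point of $C_0$ never meets $\supp f$). Under the actual hypothesis $f$ may be nonzero on the de Sitter-like region $C_0$, and since $P_\sigma$ is a differential operator, $\supp f=\supp P_\sigma(P_\sigma^{-1}f)\subset\supp P_\sigma^{-1}f$; so $P_\sigma^{-1}f$ cannot be supported in $\overline{C_+}$ when $f$ is not. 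What is true for each non-pole $\sigma$ is only the weaker statement that $P_\sigma^{-1}f$ vanishes near $\overline{C_-}$, and feeding that into your Cauchy-integral representation yields only that each $A_\ell f$ vanishes near $\overline{C_-}$, not that it is supported in $\overline{C_+}$. This is also why the lemma excludes $\ell=0$: your argument, were it correct, would prove the (false) conclusion for $A_0$ as well.

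The missing ingredient is the one the paper names just before the statement: the Jordan block (nilpotent) structure of the Laurent coefficients. Matching powers of $(\sigma-\sigma_0)$ in $P_\sigma P_\sigma^{-1}=I$ gives $P_{\sigma_0}A_kf=0$ and, for $1\le\ell<k$, $P_{\sigma_0}A_\ell f=-\sum_{j\ge 1}\frac{1}{j!}(\partial_\sigma^jP)(\sigma_0)\,A_{\ell+j}f$; crucially, $f$ itself appears on the right-hand side only in the $\ell=0$ equation. One then argues by downward induction on $\ell$: $A_\ell f$ vanishes near $\overline{C_-}$ (Cauchy's formula together with the corrected per-$\sigma$ support statement) and solves an equation whose forcing is supported in $\overline{C_+}$ by the inductive hypothesis, hence vanishes on all of $C_-\cup C_0$ by the hyperbolic uniqueness/energy estimate on $C_0$ sweeping from $S_-$ to $S_+$ --- the step you correctly identify as the stationary analogue of finite propagation speed. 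Your Cauchy-integral step and your closing observation that the reduced normal operator is independent of $\mass$ (so that \cite{radfielddecay} applies verbatim, which is indeed all the present paper does) are both fine; but as a standalone proof the argument needs the Laurent-coefficient recursion, without which the support conclusion does not follow.
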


\section{Logification}
\label{sec:logification}

The long-range term in the metric induces a logarithmic divergence of
the light cones near infinity when compared to the short-range
setting.  There are several ways to compensate for this fact: for
example, one could introduce a logarithmic correction when blowing up
$S_{\pm}$.  This method, however, causes problems, as the resulting
manifold is no longer a smooth manifold with corners.  We adopt a
different strategy here: we \emph{change the smooth structure} on
$M$ to obtain a new smooth manifold with boundary $\Mlog$ before the
blow-up.  This process removes the ambiguity surrounding what sort of
object the blown-up manifold becomes, but at the cost of introducing
logarithmic singularities in the metric coefficients.  All methods
require fixing a product structure in $X=\pd M$ near $S_{\pm}$, but the
results do not depend on the choice of product structure.  We
emphasize that this change of smooth structure will be employed only
in the final stage of our arguments (denoted ``Full Asymptotics'' in
the sketch from the introduction), when we perform the radiation field
blowup and deduce our asymptotic expansion at $\scri^+$; in the
intervening stage, at which we iteratively invert the reduced normal
operator of $L$ globally on $\pa M,$ we are using the original smooth structure.

In what follows, the coordinates on the new, ``logified'' space are
denoted by the same letters but in different fonts.  We typically
distinguish the logified function spaces with a subscript
``$\log$''.

Our assumptions on the metric $g$ imply that $dv$ is non-degenerate
in a neighborhood of $S_{\pm}$.  In particular, we now consider an
atlas of coordinate charts $\varphi_{\alpha}: U_{\alpha}\to
V_{\alpha}\subset \reals^{n}_{+}$ of this neighborhood so that $\rho$
and $v$ are always two of the coordinates.  (We denote the remaining
coordinates by $\varphi^{y}_{\alpha}$.)  Note that restricting our
attention to such charts fixes a product decomposition near
$S_{\pm}$.  

Because $S_{\pm}$ are compact, there is some constant $C$ so that $\{
\rho = 0, |v| \leq C\}$ is covered by the union of the $U_{\alpha}$.
Fix now a function $\chi \in C^{\infty}_{c}(\reals)$ so that $\chi
\equiv 1$ near $0$ and $\chi (v) \equiv 0$ for $|v|\geq C$.  

We now introduce the functions $\rholog = \rho$ and $\vlog = v + \chi
(v) \mass \rho \log \rho$ and observe that the restriction of $\vlog$
to $X$ agrees with $v$.  We change the smooth structure of the
manifold by defining a new atlas in the neighborhoods $U_{\alpha}$.
Indeed, we define charts $\tilde{\varphi}_{\alpha}: U_{\alpha}\to
\tilde{V}_{\alpha}\subset \reals^{n}_{+}$ on $\Mlog$ by
\begin{equation*}
  \tilde{\varphi}_{\alpha} = (\rholog, \vlog, \varphi^{y}_{\alpha}).
\end{equation*}
In other words, we change the smooth structure of $M$ by asking that
the function $\vlog$ (rather than $v$) be smooth.  We also use the
notation $\ylog = y$ in these coordinates.

Because $v = \vlog - \chi \mass \rholog \log \rholog$, smooth functions on
$M$ (i.e., those admitting expansions in $(\rho, v, y)$ with
nonnegative integer exponents) no longer are smooth on $\Mlog$ but
instead admit expansions in $\rholog$, $\rholog\log \rholog$, $\vlog$, and $\ylog$
with nonnegative integer exponents.  

We now define the algebras of functions and of differential operators
with mildly singular coefficients that we employ.

\begin{definition}
  \label{def:CIlog}
  We let $\CI (\Mlog)$ denote the coefficient ring of functions smooth
  on $\Mlog$ (i.e., $M$ equipped with the new smooth structure), while
  we let $\CIlog(\Mlog)$ denote the coefficient ring consisting of smooth
  functions of $\rholog$, $\rholog \log \rholog$, $\vlog$, and $\ylog$.
\end{definition}

Observe that $\CIlog$ is also the set of distributions conormal to $X
= \pa \Mlog$ that are polyhomogeneous with index set
\begin{equation*}
  \logsmoothset = \{ (k, j) : k = 0, 1, 2, \dots, j = 0, 1, \dots, k\}.
\end{equation*}

To clarify which manifold we are working on, we introduce the notation
$$
\taut: M \to \Mlog
$$
for the tautological map between these two manifolds.  We let
$$\taut^*: \CI(\Mlog) \to \CIlog(M)$$ denote the natural pullback map,
and also, by modest abuse of notation, let
$$\taut_*: \CI(M) \to \CIlog(\Mlog),$$ denote pullback under
$\taut^{-1}.$  We will employ the analogous notation for pushforward
(and pullback!) of vector fields as well.

\begin{definition}
  We say that $P \in \Diffblog^{k}(\Mlog)$ if $P \in \CIlog(\Mlog)
  \otimes \Diffb^{k}(\Mlog)$.   In other words, $P \in
  \Diffblog^{k}(\Mlog)$ if there are coefficients $a_{\alpha}\in
  \CIlog(\Mlog)$ so that
  \begin{equation*}
    P = \sum_{\smallabs{\alpha} \leq k} a_{\alpha} D^{\alpha},
  \end{equation*}
  where $D^{\alpha}$ are monomials in the vector fields $\rholog
  D_{\rholog}$, $D_{\vlog}$, and $D_{\ylog_{j}}$.
\end{definition}

Even though the lift of $\taut_* L$ of $L$ to $\Mlog$ lives in this
space, we include here a more general class of operators for future
work.  In particular, we also allow terms of the form
$\rholog \log \rholog D_{\rholog}$, which are not in $\Diffblog$.  We
consider the slightly larger space
$\Diffblogbad^{k}(\Mlog)$.  Elements of this space have the form
\begin{equation*}
  \sum_{\smallabs{\alpha}\leq k}a_{\alpha}D^{\alpha},
\end{equation*}
where $a_{\alpha}\in\CIlog$ and $D^{\alpha}$ are monomials in the
vector fields $\rholog D_{\rholog}$, $\rholog \log \rholog
D_{\rholog}$, $D_{\vlog}$, and $D_{\ylog_{j}}$.  Observe that
$\Diffblog^{k} \subset \Diffblogbad^{k}$.\footnote{Unfortunately, this larger
space is not a graded algebra (and $\Diffblogbad^{1}/\Diffblogbad^{0}$
is not a Lie algebra), but we avoid these problems by working with
$\Diffblog$ when possible.}

Let $\cI \subset \CI$ denote the ideal of smooth functions
vanishing at $S_{+}$, while $\cIlog\subset \CIlog$ is the ideal of
$\CIlog$ functions vanishing at $S_{+}$.  In other words, $f \in \cI$
if there are smooth functions $a_{1}$ and $a_{2}$ so that $f = \rholog
a_{1} + \vlog a_{2}$, while $f \in \cIlog$ if there are $\CIlog$
functions $a_{1}$, $a_{2}$, and $a_{3}$ so that $f = \rholog a_{1} +
\vlog a_{2} + (\rholog \log \rholog )a_{3}$.

We now define the module $\cMdifflog \subset \Diffblog^{1}$ to be the
module of vector fields logarithmically tangent to $\rho = 0$ and
$S_{+},$ i.e.\ that map $\rholog,\ \vlog$ to $O(\rholog) + O( \rholog \log
\rholog)+O(\vlog).$ Over $\CIlog,$ this module is generated by $\rholog D_{\rholog}$,
$\rholog D_{\vlog}$, $\vlog D_{\vlog}$, and $D_{\ylog}$.

Finally, we define the ``bad module'' $\badmod\subset \Diffblogbad^{1}$
as the corresponding module in the larger space, so it is generated
over $\CIlog$ by $\rholog D_{\rholog}$, $\rholog \log \rholog
D_{\rholog}$, $\rholog D_{\vlog}$, $\rholog \log \rholog D_{\vlog}$,
$\vlog D_{\vlog}$, and $D_{\ylog _{j}}$.  

Observe that just as the module $\cM$ maps $\cI$ to itself, 
$\cMdifflog$ preserves $\cIlog$.  The ``bad module'' $\badmod$ maps
$\cI$ to $\cIlog$.

\begin{lemma}
  \label{lemma:changeofmodule}
  We may characterize $\cMdifflog$ as
  \begin{equation*}
    \cMdifflog = \CIlog \otimes \cMdiff (\Mlog)
  \end{equation*}
  and the following inclusion holds for $\badmod$:
  \begin{equation*}
    \badmod \subset \cMdifflog + (\log \rholog) \cMdifflog.
  \end{equation*}

  Moreover, $$\taut_* \cMdiff\subset \badmod,$$ while $$\badmod\subset
  \taut_* \cMdiff(M) + \taut_* (\log \rho)
  \cMdiff(M).$$
\end{lemma}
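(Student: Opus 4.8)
The plan is to deduce all four statements from a direct comparison of generators, using repeatedly that a finitely generated module is determined by its generators over its coefficient ring, that $\CIlog$-modules (such as $\badmod$, $\cMdifflog$, and their $\log\rholog$-translates) are closed under multiplication by $\CIlog$, and that near $S_{+}$ the functions $\rholog$ and $\vlog$ form a coordinate system on $\Mlog$ (since $\vlog$ restricts to $v$ on $X=\pa\Mlog$, where $dv$ is nondegenerate).

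First, the two assertions involving only $\Mlog$. Defining $\cMdiff(\Mlog)$ exactly as in Definition~\ref{def:module} but with $(\rho,v,y)$ replaced by $(\rholog,\vlog,\ylog)$, it is generated over $\CI(\Mlog)$ by $\rholog D_{\rholog}$, $\rholog D_{\vlog}$, $\vlog D_{\vlog}$, $D_{\ylog_{j}}$ and $I$; hence $\CIlog\otimes\cMdiff(\Mlog)$ is the $\CIlog$-module on exactly the generating list given for $\cMdifflog$, and the two coincide once one knows that $\cMdifflog$ — the operators logarithmically tangent to $\{\rholog=0\}\cup S_{+}$ — really is generated over $\CIlog$ by $\rholog D_{\rholog}, \rholog D_{\vlog}, \vlog D_{\vlog}, D_{\ylog_{j}}$; this last point is the standard localization argument, Taylor expanding the $\CIlog$ coefficients of such an operator in $(\rholog,\vlog,\ylog)$ near $S_{+}$ (where $d\vlog\neq0$). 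For $\badmod\subset\cMdifflog+(\log\rholog)\cMdifflog$, I check the six listed generators of $\badmod$ directly: the four ``good'' ones lie in $\cMdifflog$, while $\rholog\log\rholog D_{\rholog}=(\log\rholog)(\rholog D_{\rholog})$ and $\rholog\log\rholog D_{\vlog}=(\log\rholog)(\rholog D_{\vlog})$ lie in $(\log\rholog)\cMdifflog$; since the right-hand side is a $\CIlog$-module it then contains all of $\badmod$.

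For $\taut_*\cMdiff(M)\subset\badmod$ I would compute $\taut_*$ on each of the five generators of $\cMdiff(M)$. Since $\taut$ fixes $\rho$ and $y$ and sends $v$ to $\vlog=v+\chi(v)\mass\rholog\log\rholog$, one gets $\taut_* D_{y_{j}}=D_{\ylog_{j}}$, $\taut_* I=I$, and (by the chain rule, or by differentiating the $\rho$-dilation flow) $\taut_*(\rho D_{\rho})=\rholog D_{\rholog}+\chi(v)\mass(\rholog\log\rholog+\rholog)D_{\vlog}$, together with similar formulas for $\taut_*(\rho D_{v})$ and $\taut_*(v D_{v})$ in which every correction to $\rholog D_{\vlog}$, resp.\ $\vlog D_{\vlog}$, is a $\CIlog$-multiple of the $\badmod$-generator $\rholog D_{\vlog}$ or $\rholog\log\rholog D_{\vlog}$ (the coefficients being built from $\chi(v),\chi'(v)\in\CIlog$, the latter supported away from $S_{+}$). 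Thus $\taut_*$ of each generator lies in $\badmod$; since $\taut_*\CI(M)\subset\CIlog$ and $\badmod$ is a $\CIlog$-module, $\taut_*\cMdiff(M)\subset\badmod$ follows.

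The reverse inclusion $\badmod\subset\taut_*\cMdiff(M)+\taut_*\big((\log\rho)\cMdiff(M)\big)$ is the crux. Inverting the relations above expresses $\rholog D_{\rholog}$ and $\vlog D_{\vlog}$ as $\taut_*$-images of generators of $\cMdiff(M)$ modulo $\CIlog$-multiples of $\rholog D_{\vlog}$ and $\rholog\log\rholog D_{\vlog}$; since $\rholog D_{\vlog}$ is $\taut_*(\rho D_{v})$ up to a correction that is itself $\taut_*$ of $(\log\rho)$ times a generator, and $\rholog\log\rholog D_{\vlog}=(\log\rholog)(\rholog D_{\vlog})$, the four good generators land in the target module, the point being $(\log\rholog)\,\taut_*(A)=\taut_*\big((\log\rho)A\big)$. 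The delicate case — and the step I expect to be the main obstacle — is $\rholog\log\rholog D_{\rholog}$ (and likewise $\rholog\log\rholog D_{\vlog}$): writing it as $(\log\rholog)(\rholog D_{\rholog})$ and substituting the formula for $\rholog D_{\rholog}$ produces a term carrying \emph{two} factors of $\log\rholog$, which a priori leaves the target. The resolution is that the coordinate change was engineered so that these surplus logarithms cancel: one has $\taut_*(\rho D_{\rho}+v D_{v})=\rholog D_{\rholog}+\vlog D_{\vlog}+\mass\rholog D_{\vlog}$ with no logarithmic coefficient at all (the same cancellation that puts $\taut_* L$ in $\Diffblog$), so by pivoting the whole computation through this log-free combination rather than through $\rho D_{\rho}$ in isolation, one arranges every leftover factor of $\log\rholog$ to multiply a $\taut_*$-image of an element of $(\log\rho)\cMdiff(M)$, keeping the result inside the asserted module. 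One then checks the same reorganization on the support of $\chi'$, where the extra terms are lower order and involve only b-tangency to $\{\rholog=0\}$ (there $v$ is bounded away from $0$), completing the proof.
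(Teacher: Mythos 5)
Your handling of the first three assertions is correct and is the same generator-by-generator argument the paper gives (the paper's entire proof is one sentence per claim plus the displayed computation of $\taut_*(\rho D_\rho)$). The problem is in the last inclusion, where you have correctly put your finger on the genuine difficulty --- the generator $\rholog\log\rholog D_{\rholog}$ produces a $(\log\rholog)^2$ --- but the cancellation you invoke to dispose of it does not occur. Work near $S_+$ where $\chi\equiv 1$, $\chi'\equiv 0$, so that
$\taut_*(\rho D_v)=\rholog D_{\vlog}$, $\taut_*(vD_v)=(\vlog-\mass\rholog\log\rholog) D_{\vlog}$, and $\taut_*(\rho D_\rho)=\rholog D_{\rholog}+\mass(1+\log\rholog)\rholog D_{\vlog}$. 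Then
\begin{equation*}
\rholog\log\rholog D_{\rholog}=\taut_*\bigl((\log\rho)(\rho D_\rho-\mass\rho D_v)\bigr)-\mass\,\taut_*\bigl((\log\rho)^2\,\rho D_v\bigr),
\end{equation*}
and $(\log\rho)^2\rho D_v\notin\cMdiff(M)+(\log\rho)\cMdiff(M)$, because coefficients in $\cMdiff(M)$ are smooth (a direct comparison of the $D_{\vlog}$-coefficients shows $\rholog(\log\rholog)^2D_{\vlog}$ genuinely fails to lie in the target module). Pivoting through $R=\rho D_\rho+vD_v$ does not repair this: it is true that $\taut_*R=\rholog D_{\rholog}+\vlog D_{\vlog}+\mass\rholog D_{\vlog}$ is log-free, so $(\log\rholog)\taut_*R=\taut_*((\log\rho)R)$ lies in the target, but to isolate $\rholog\log\rholog D_{\rholog}$ from it you must subtract $(\log\rholog)\vlog D_{\vlog}$, and
\begin{equation*}
(\log\rholog)\,\vlog D_{\vlog}=\taut_*\bigl((\log\rho)\,vD_v\bigr)+\mass\,\taut_*\bigl((\log\rho)^2\,\rho D_v\bigr)
\end{equation*}
carries exactly the same obstruction with the opposite sign. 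The two identities are mutually consistent, but neither places the individual generator in $\taut_*\cMdiff(M)+\taut_*\bigl((\log\rho)\cMdiff(M)\bigr)$.

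What is actually provable is that each generator of $\badmod$ lies in $\sum_{j=0}^{2}\taut_*\bigl((\log\rho)^j\cMdiff(M)\bigr)$, with the $j=2$ term needed only for $\rholog\log\rholog D_{\rholog}$ (a similar caveat applies to general $\CIlog$ coefficients, which need not lie in $\taut_*\CI(M)+(\log\rholog)\,\taut_*\CI(M)$). This weaker statement suffices for every use of the lemma in the paper --- in \eqref{sobolevlog} and in Proposition~\ref{prop:bad-module} / \eqref{badmodsobolev} any fixed power of $\log\rholog$ is absorbed into the $\gamma-0$ --- so the correct move is to weaken the asserted inclusion (or record the extra logarithm), not to search for a cancellation that is not there. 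In short: you found a real imprecision that the paper's one-line proof glosses over, but your claimed resolution of it is false, so as written your argument for the fourth inclusion has a gap.
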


\begin{proof}
  The first statement follows because $\cMdiff$ and $\cMdifflog$ are
  generated by the same vector fields but over different rings.  The
  second statement follows by examination of the generators of
  $\badmod$.  

  For the final statement, we need only calculate the lifts of the
  generators.  For instance,
  \begin{equation*}
   \taut_* \rho D_{\rho} = \rholog D_{\rholog} + \rholog \left(
      \frac{\pd \vlog}{\pd \rho}D_{\vlog}\right) = \rholog D_{\rholog}
    + \rholog m (\log \rholog + 1)D_{\vlog} + O(\rholog^{2}\log \rholog)D_{\vlog}.
  \end{equation*}
\end{proof}

A consequence of Lemma~\ref{lemma:changeofmodule} is that the passage
from $M$ to $\Mlog$ does not materially change the b-Sobolev spaces.
In particular, we have the following equalities for all $s$ and
$\gamma$:
\begin{equation}\label{sobolevlog}
 \iota^* \Hb^{s,\gamma-0}(\Mlog) = \Hb^{s,\gamma-0}(M).
\end{equation}
This means that, provided we are willing to lose a small amount of
regularity and decay, neither the Sobolev nor conormal spaces change.
A further consequence of this fact is that
Proposition~\ref{prop:radial-b-estimate} implies that module
regularity under $\cM(M)$ immediately implies module regularity under
$\cMdifflog (\Mlog)$ and $\badmod$.
In particular,
the bad module loses just an epsilon relative to the good
one owing to log terms: 

\begin{proposition}
  \label{prop:bad-module}
  For each $k\in \NN$, we
  have 
  \begin{equation}
    \label{badmodsobolev} \cMdiff^N u \in
    \Hb^{s,\gamma} (M)\ \forall N \Longrightarrow \cMdiff^N \badmod^k
    \taut_* u \in \Hb^{s,\gamma-0} (\Mlog)\ \forall N.
  \end{equation}
\end{proposition}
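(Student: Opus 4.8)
\textit{Proof plan.} The plan is to conjugate the assertion back to the original manifold $M$, where the hypothesized module regularity already lives, and to absorb the change of smooth structure into harmless powers of $\log\rho$. First I would use \eqref{sobolevlog} to reduce to a statement on $M$: by that identity the map $\taut^*$ restricts to a bijection $\Hb^{s,\gamma-0}(\Mlog)\to\Hb^{s,\gamma-0}(M)$, with inverse $\taut_*$. So, fixing $N$ and a composition $P_{\log}$ of $N$ generators of $\cMdiff(\Mlog)$ with $k$ generators of $\badmod$, and setting $P=\taut^*P_{\log}\taut_*$ for the same operator transported to $M$, we have $P_{\log}\taut_* u=\taut_*(Pu)$, and it therefore suffices to prove $Pu\in\Hb^{s,\gamma-0}(M)$ for every such $P$.

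Next I would identify $P$ using Lemma~\ref{lemma:changeofmodule}. Since $\cMdiff(\Mlog)\subset\cMdifflog\subset\badmod$, the operator $P_{\log}$ is a composition of $N+k$ elements of $\badmod$; and, again by Lemma~\ref{lemma:changeofmodule}, conjugation by $\taut$ carries $\badmod$ into the $\CIlog$-span of $\cMdiff(M)$ and $(\log\rho)\cMdiff(M)$. Hence $P$ is a finite sum of operators of the form $(\log\rho)^{\varepsilon_1}c_1 V_1\cdots(\log\rho)^{\varepsilon_{N+k}}c_{N+k}V_{N+k}$ with $\varepsilon_i\in\{0,1\}$, $c_i\in\CIlog(M)$, and each $V_i$ one of the generators $\rho\pa_\rho$, $\rho\pa_v$, $v\pa_v$, $\pa_y$ of $\cMdiff(M)$. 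The key observation is that each such $V$ sends $\log\rho$ to a \emph{smooth} function on $M$ (indeed to a constant), and $V$ maps $\CIlog(M)$ to itself; so the Leibniz rule lets one move all factors of $\log\rho$ and all the $c_i$ to the far left, at the cost only of lower-order terms with smooth, resp.\ $\CIlog(M)$, coefficients. Using that $\cMdiff(M)$ is closed under commutators, this rewrites $P$ as a finite sum of operators $(\log\rho)^{\ell}b_{\ell}W_{\ell}$ with $\ell\le N+k$, $b_{\ell}\in\CIlog(M)$, and $W_{\ell}\in\cMdiff(M)^{N+k}$.

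To conclude: by hypothesis (applied with $N+k$ in place of $N$) we have $W_{\ell}u\in\Hb^{s,\gamma}(M)$; and $(\log\rho)^{\ell}b_{\ell}$ is conormal to $\pa M$ with itself and all of its $\Vb(M)$-derivatives of size $O(\rho^{-\delta})$ for every $\delta>0$, so multiplication by it maps $\Hb^{s,\gamma}(M)$ into $\Hb^{s,\gamma-0}(M)$. Therefore $Pu\in\Hb^{s,\gamma-0}(M)$, and hence $P_{\log}\taut_* u=\taut_*(Pu)\in\Hb^{s,\gamma-0}(\Mlog)$, which is \eqref{badmodsobolev}. The only genuine work is the bookkeeping in the previous paragraph — organizing the Leibniz expansion of a composition of $N+k$ factors drawn from $\cMdiff(M)+(\log\rho)\cMdiff(M)$, and checking that every commutator produced (of a $\cMdiff(M)$-generator with $\log\rho$, or of two $\cMdiff(M)$-generators) remains among the $\CIlog(M)$-coefficient operators built from $\cMdiff(M)$ of the expected order; this is precisely where closure of $\cMdiff(M)$ under commutators and the smoothness of $V\log\rho$ are used. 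Everything else is formal, following from \eqref{sobolevlog} and Lemma~\ref{lemma:changeofmodule}.
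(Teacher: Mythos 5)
Your argument is correct and is precisely the route the paper intends: Proposition~\ref{prop:bad-module} is stated there as an immediate consequence of Lemma~\ref{lemma:changeofmodule} together with \eqref{sobolevlog}, and your proof simply supplies the Leibniz/commutator bookkeeping (moving the $\CIlog$ coefficients and $\log\rho$ factors to the left, using that the generators of $\cMdiff(M)$ annihilate or map $\log\rho$ to a constant and that multiplication by $(\log\rho)^{\ell}$ costs only $\rho^{-0}$) that the paper leaves implicit. No gaps.
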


We now easily verify the following:
\begin{proposition}
  The space $\Diffblog^{1}(\Mlog) / \Diffblog^{0}(\Mlog)$ consisting
  of b-vector fields with coefficients in $\CIlog$ is a Lie algebra;
  $\Diffblog^{*}(\Mlog)$ is a graded algebra.  
\end{proposition}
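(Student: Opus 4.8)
The plan is to reduce both assertions to two facts: that $\CIlog(\Mlog)$ is a commutative ring, and that the basic b-vector fields act on it. Granting these, the Lie-algebra and graded-algebra statements follow from the usual arguments, run over the coefficient ring $\CIlog$ in place of $\CI$. For the first fact, note that by definition an element of $\CIlog(\Mlog)$ is a smooth function of the four quantities $\rholog$, $\rholog\log\rholog$, $\vlog$, $\ylog$, and a product of two such functions is again of this form; equivalently, using the polyhomogeneity characterization recorded above, $\CIlog(\Mlog)=\phg{\logsmoothset}(\Mlog)$ with $\logsmoothset=\{(k,j):0\le j\le k\}$, and this index set is closed under the addition $(k,j)+(k',j')=(k+k',j+j')$ that governs products of conormal expansions, since $j\le k$ and $j'\le k'$ force $j+j'\le k+k'$. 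Either way $\CIlog(\Mlog)$ is a commutative ring containing $\CI(\Mlog)$.

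The heart of the matter --- and the one step with any real content --- is to show that $\rholog D_{\rholog}$, $D_{\vlog}$, and the $D_{\ylog_j}$, and hence all of $\Vb(\Mlog)$ and all of $\Diffblog^k(\Mlog)$, map $\CIlog(\Mlog)$ into itself. By the chain rule this reduces to evaluating these vector fields on the four generators of the logified smooth structure. All cases are immediate except one: $\rholog D_{\rholog}$ annihilates $\vlog$ and $\ylog$, sends $\rholog$ to a constant multiple of $\rholog$, and --- crucially --- sends $\rholog\log\rholog$ to a constant multiple of $\rholog\log\rholog+\rholog$, which still lies in $\CIlog(\Mlog)$. This is precisely the point that fails in the larger space: $\rholog\log\rholog\,D_{\rholog}$ sends $\rholog\log\rholog$ to a multiple of $\rholog(\log\rholog)^2$, which is \emph{not} in $\CIlog(\Mlog)$ (index $(1,2)\notin\logsmoothset$), and this is exactly why $\Diffblogbad$ and $\badmod$ fail to be algebras. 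Since any element of $\Vb(\Mlog)$ is a $\CI(\Mlog)$-linear combination of the basic vector fields and $\CI(\Mlog)\subset\CIlog(\Mlog)$, it too preserves $\CIlog(\Mlog)$, as does any $P\in\Diffblog^k(\Mlog)$.

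With these in hand the rest is bookkeeping. For the Lie-algebra statement I would observe that a class in $\Diffblog^1(\Mlog)/\Diffblog^0(\Mlog)$ is represented by a b-vector field with $\CIlog$ coefficients, and that for $f,f'\in\CIlog(\Mlog)$ and $W,W'\in\Vb(\Mlog)$ the identity $[fW,f'W']=f(Wf')W'-f'(W'f)W+ff'[W,W']$ exhibits the bracket again as such a vector field: $ff'\in\CIlog(\Mlog)$ because $\CIlog$ is a ring, $[W,W']\in\Vb(\Mlog)$ because $\Vb$ is a Lie algebra over $\CI$, and $Wf',W'f\in\CIlog(\Mlog)$ by the preceding step; the Lie-algebra axioms are then inherited. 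For the graded-algebra statement, $\Diffblog^*(\Mlog)$ is visibly filtered, and to see $\Diffblog^k(\Mlog)\cdot\Diffblog^{k'}(\Mlog)\subset\Diffblog^{k+k'}(\Mlog)$ I would write the two factors in terms of $\CIlog$ coefficients and monomials in the basic b-vector fields and then commute coefficients to the left by Leibniz's rule: the new coefficients that appear are iterated applications of the basic b-vector fields to $\CIlog$ functions, hence still in $\CIlog$, and the orders add (using that $\Diffb$ is a graded algebra), so the product lands in $\Diffblog^{k+k'}(\Mlog)$. I do not anticipate a genuine obstacle; the only subtlety, already isolated in the second paragraph, is the benign behavior of $\rholog D_{\rholog}$ on $\rholog\log\rholog$.
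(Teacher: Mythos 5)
Your proof is correct and follows essentially the same route as the paper: the paper's entire proof consists of the single observation that $[\rholog D_{\rholog}, \rholog\log\rholog] = \imath^{-1}(\rholog\log\rholog + \rholog) \in \CIlog$, which is exactly the ``one step with any real content'' you isolate, with the remaining ring/Leibniz bookkeeping left implicit. Your aside on why $\Diffblogbad$ fails (since $\rholog\log\rholog\,D_{\rholog}$ produces $\rholog(\log\rholog)^2 \notin \CIlog$) matches the paper's footnote on that point.
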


\begin{proof}
  The only new ingredient compared to the usual, smooth, case is the
  fact that
  \begin{equation*}
    [\rholog, D_{\rholog}, \rholog\log \rholog] = \imath^{-1}(\rholog
    \log \rholog + \rholog) \in \CIlog.
  \end{equation*}
\end{proof}

An essential ingredient in our iterative argument will be the
following refinement of Lemma~\ref{lemma:LmodM}; this is essentially
the main point of our change of variables from $v$ to $\vlog$, which
makes the $-4\mass \rho D_{v}^{2}$ term in the operator disappear.
\begin{lemma}
  \label{lemma:Lnormop}
We have
  \begin{equation}
    \label{eq:modlogs}
   \taut_* L = 4\pd[\vlog]\left( \rholog\pd[\rholog] + \vlog
      \pd[\vlog]\right) + \badmod^{2}
  \end{equation}
\end{lemma}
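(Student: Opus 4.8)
The plan is to push forward the identity of Lemma~\ref{lemma:LmodM} under $\taut_*$ and to exploit the fact that the logarithmic correction built into $\vlog$ is engineered precisely to cancel the $-4\mass\rho\,\pd[v]^{2}$ term. I would work in a neighborhood of $S_{+}$, where $\chi \equiv 1$ and hence $\vlog = v + \mass\rholog\log\rholog$, i.e.\ $v = \vlog - \mass\rholog\log\rholog$. The first step is to record the relevant pushforwards of vector fields: by the chain rule (this is essentially the computation already carried out in the proof of Lemma~\ref{lemma:changeofmodule}) one has, near $S_{+}$,
\begin{equation*}
  \taut_*\pd[v] = \pd[\vlog], \quad \taut_*(\rho\pd[\rho]) = \rholog\pd[\rholog] + \mass\rholog(\log\rholog + 1)\pd[\vlog], \quad \taut_*(v\pd[v]) = (\vlog - \mass\rholog\log\rholog)\pd[\vlog],
\end{equation*}
together with $\taut_*\rho = \rholog$.

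The second step is the cancellation. Adding the first-order pieces, the $\log\rholog$ contributions to $\taut_*(\rho\pd[\rho] + v\pd[v])$ telescope, leaving $\rholog\pd[\rholog] + \vlog\pd[\vlog] + \mass\rholog\pd[\vlog]$; multiplying on the left by $4\,\taut_*\pd[v] = 4\pd[\vlog]$ and using $\pd[\vlog]\rholog = 0$ produces $4\pd[\vlog](\rholog\pd[\rholog] + \vlog\pd[\vlog]) + 4\mass\rholog\pd[\vlog]^{2}$. On the other hand $\taut_*(-4\mass\rho\,\pd[v]^{2}) = -4\mass\rholog\pd[\vlog]^{2}$, which cancels the extra quadratic term exactly. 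This is the heart of the matter, and the whole reason for passing from $v$ to $\vlog$.

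The last step is to dispose of the error $\cM^{2}$ of Lemma~\ref{lemma:LmodM} (which for the differential operator $L$ is in fact an element of $\cMdiff^{2}$). Since $\taut_*$ is multiplicative on differential operators and sends $\CI(M)$ coefficients to $\CIlog(\Mlog)$ coefficients, and since $\taut_*\cMdiff \subset \badmod$ by Lemma~\ref{lemma:changeofmodule}, we get $\taut_*\cMdiff^{2} \subset \badmod^{2}$ (using that $\badmod^{2}$ is a $\CIlog$-module). Assembling the three computations yields $\taut_* L = 4\pd[\vlog](\rholog\pd[\rholog] + \vlog\pd[\vlog]) + \badmod^{2}$.

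I do not expect a serious obstacle: given Lemmas~\ref{lemma:LmodM} and~\ref{lemma:changeofmodule} this is a short computation. The point requiring care is verifying that the $\log\rholog$-bearing terms cancel \emph{exactly} in the leading second-order part — rather than leaving behind an operator such as $\rholog\log\rholog\,D_{\vlog}^{2}$ that would only place us in $(\log\rholog)\badmod^{2}$ rather than $\badmod^{2}$ — and checking that the minor corrections present away from $S_{+}$, coming from $\chi'$ and $\chi - 1$ (both of which vanish to infinite order at $S_{+}$), contribute only further elements of $\badmod$, hence of $\badmod^{2}$.
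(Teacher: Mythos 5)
Your proposal is correct and follows essentially the same route as the paper: starting from Lemma~\ref{lemma:LmodM}, pushing forward $\pd[v]$, $\rho\pd[\rho]$, and $v\pd[v]$ under the logification, observing that the $\log\rholog$ contributions cancel exactly against $\taut_*(-4\mass\rho\,\pd[v]^2)$, and invoking Lemma~\ref{lemma:changeofmodule} to place the remainder in $\badmod^2$. The paper simply carries the cutoff factors $\chi$, $\chi'$ through the computation rather than localizing to $\chi\equiv 1$ first, but this is a presentational difference only.
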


\begin{proof}
  We note that in the coordinate change from $v$ to $\vlog$, we have
  \begin{equation*}
  \taut_*  \pd[v] = (1 + \chi' (v) \mass \rholog \log \rholog) \pd[\vlog], \
    \taut_* \pd[\rho] =\pd[\rholog] +  \chi (v) \mass (1 + \log
    \rholog) \pd[\vlog]
  \end{equation*}
  and hence
  \begin{align*}
  \taut_*  v\pd[v] &= \left( \vlog - \chi \mass \rholog \log
      \rholog\right)\left( 1 + \chi' \mass \rholog
      \log\rholog\right)\pd[\vlog] \\
   \taut_* \rho \pd[\rho] &= \rholog\pd[\rholog] + \chi \mass \rholog
    (1 + \log \rholog) \pd[\vlog]
  \end{align*}
  Applying Lemma~\ref{lemma:changeofmodule} yields~\eqref{eq:modlogs}.
\end{proof}

The following lemma is useful in Section~\ref{sec:asympt-expans}; it
shows that additional vanishing at $S_{+}$ in fact improves regularity.
\begin{lemma}
  \label{lemma:inclusion}
  $\displaystyle \cI\subset \Psib^{-1}\cMdiff$ and $\cIlog\subset \Psib^{-1}\cMdifflog$.
\end{lemma}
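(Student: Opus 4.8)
The plan is to prove the two inclusions in parallel; I describe the argument for $\cI\subset\Psib^{-1}\cMdiff$ in detail and indicate the (cosmetic) modifications needed for the logified version at the end. Fix $f\in\cI$; since $f$ is bounded and smooth, multiplication by $f$ is an operator in $\Psib^{0}(M)$, and the goal is to exhibit it as a finite sum of products $QA$ with $Q\in\Psib^{-1}(M)$ and $A\in\cMdiff$. The idea is to localize on $\Sb^{*}M$ relative to $\pa\cR$. I would cover $\Sb^{*}M$ by an open set $U_{0}$ containing $\pa\cR$ together with finitely many open sets $U_{1},\dots,U_{N}$ disjoint from $\pa\cR$, pick a subordinate partition of unity $\{\Psi_{0},\dots,\Psi_{N}\}\subset\Psib^{0}(M)$ with $\sum_{j}\Psi_{j}=\id$ (and $\Psi_{j}$ microsupported in $U_{j}$), write $f=\sum_{j}\Psi_{j}f$ (composition with multiplication by $f$), and show that each $\Psi_{j}f$ lies in $\Psib^{-1}\cMdiff$.

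\emph{Away from $\pa\cR$.} The key point is that $\cMdiff$ contains a vector field elliptic at every $q\in\Sb^{*}M\setminus\pa\cR$. Recall $\cR=\{\rho=0,\ v=0,\ \xi=0,\ \eta=0\}$, and that the generators $\rho\pd[\rho]$, $\pd[y]$, $\rho\pd[v]$, $v\pd[v]$ of $\cMdiff$ are elliptic exactly on $\{\xi\neq0\}$, $\{\eta\neq0\}$, $\{\rho\gamma\neq0\}$, $\{v\gamma\neq0\}$, respectively. If $q\notin\pa\cR$ then $\xi(q)\neq0$, or $\eta(q)\neq0$, or else $\xi(q)=\eta(q)=0$, in which case $\gamma(q)\neq0$ (as $q$ lies in $\Sb^{*}M$) and, since $q\notin\pa\cR$, the base point of $q$ lies off $S_{+}=\{\rho=v=0\}$, so $\rho\gamma$ or $v\gamma$ is non-zero at $q$; in every case one of the four generators, call it $A_{j}$, is elliptic at $q$. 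Choosing $U_{j}$ small enough and a microlocal parametrix $E_{j}\in\Psib^{-1}(M)$ with $E_{j}A_{j}=\id$ microlocally on $U_{j}$, we obtain $\Psi_{j}f=(\Psi_{j}fE_{j})A_{j}$ modulo $\Psib^{-\infty}(M)$, and $\Psi_{j}fE_{j}\in\Psib^{-1}(M)$ while $A_{j}\in\cMdiff$, so $\Psi_{j}f\in\Psib^{-1}\cMdiff$.

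\emph{Near $\pa\cR$.} Here $\cMdiff$ has no elliptic element, but the containment $\cR\subset\{\xi=\eta=0\}$ forces $\gamma\neq0$ on a neighborhood of $\pa\cR$, so $D_{v}$ is itself elliptic of order $1$ there; fix $E_{0}\in\Psib^{-1}(M)$ with $E_{0}D_{v}=\id$ microlocally on $U_{0}$. The crucial observation is that $fD_{v}$ is a b-differential operator of order $1$ with principal symbol a constant multiple of $f\gamma$, which vanishes on $\cR$ precisely because $f$ vanishes on $\{\rho=v=0\}\supset\cR$; that is, $fD_{v}\in\cMdiff$ exactly because $f\in\cI$. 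Commuting $f$ past $E_{0}$,
\[
\Psi_{0}f=\Psi_{0}fE_{0}D_{v}=\Psi_{0}E_{0}(fD_{v})+\Psi_{0}[f,E_{0}]D_{v}\pmod{\Psib^{-\infty}(M)};
\]
the first term lies in $\Psib^{-1}\cMdiff$ since $\Psi_{0}E_{0}\in\Psib^{-1}(M)$ and $fD_{v}\in\cMdiff$, while $[f,E_{0}]\in\Psib^{-2}(M)$ is one order lower, so the second term lies in $\Psib^{-1}(M)\subset\Psib^{-1}\cMdiff$ (using $\id\in\cMdiff$). Summing over $j$ gives $f\in\Psib^{-1}\cMdiff$.

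For $\cIlog\subset\Psib^{-1}\cMdifflog$ one repeats the argument verbatim on $\Mlog$, with $\rho,v,y$ replaced by $\rholog,\vlog,\ylog$, $\CI$ by $\CIlog$, and $\cMdiff$ by $\cMdifflog$: away from $\pa\cR$ the generators $\rholog D_{\rholog}$, $D_{\ylog}$, $\rholog D_{\vlog}$, $\vlog D_{\vlog}$ supply the elliptic module element, and near $\pa\cR$ one uses that $fD_{\vlog}$ is log-tangent to $\rho=0$ and to $S_{+}$, hence lies in $\cMdifflog$, for every $f\in\cIlog$ --- in particular for the extra generator $\rholog\log\rholog$ of $\cIlog$. (The cutoffs, parametrices, and order $-1$ operators here may all be taken with smooth coefficients, so no enlargement of the pseudodifferential calculus is needed, at the cost of the usual arbitrarily small losses in the weight.) I expect the only steps requiring genuine care to be the case analysis exhibiting the elliptic generator off $\pa\cR$ and, more importantly, the observation that $fD_{v}\in\cMdiff$ if and only if $f\in\cI$, which is precisely what makes the degenerate region near $\pa\cR$ --- where $\cMdiff$ has no elliptic element --- nonetheless controllable; everything else is bookkeeping in the b-calculus.
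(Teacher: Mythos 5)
Your proof is correct, but it is organized differently from the one in the paper, which is shorter and entirely global. The paper reduces at once to the generators $\rho$ and $v$ of $\cI$ and observes that the single column operator $(\rho\pd[\rho],\ \pd[v],\ \pd[y])^{\mathrm{T}}$, composed with multiplication by $v$ (or $\rho$), has every entry in $\cMdiff$ (since $v\rho\pd[\rho]$, $v\pd[v]$, $v\pd[y]$ and the zeroth order terms all lie in the module); since that column operator is \emph{jointly} elliptic on all of $\Sb^{*}M$ (its joint symbol $(\xi,\gamma,\eta)$ never vanishes there), it admits a left parametrix with entries in $\Psib^{-1}$, and applying it on the left gives $v\in\Psib^{-1}\cMdiff$ in one step, with no partition of unity, no case analysis, and no commutator term. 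Your microlocal decomposition unpacks exactly the same mechanism pointwise on $\Sb^{*}M$: off $\pa\cR$ one of the components of that column is already (after multiplication by a module coefficient) an elliptic element of $\cMdiff$, while at $\pa\cR$ only $\pd[v]$ survives and the vanishing of $f$ on $S_{+}$ is what puts $f\pd[v]$ back into the module. What your version buys is transparency about \emph{where} the module degenerates and why membership in $\cI$ is exactly the needed compensation there; what it costs is the bookkeeping of the microlocal partition, the parametrix identities modulo $\Psib^{-\infty}$, and the commutator $[f,E_{0}]\in\Psib^{-2}$ (all of which you handle correctly). One small caveat, shared with the paper: both arguments are implicitly local near $S_{+}$, i.e.\ they use the characterization $f=\rho a_{1}+va_{2}$ (resp.\ the logified analogue), which is how the paper actually defines $\cI$; and in the logified case your appeal to the ``log-tangency'' description of $\cMdifflog$ to place $fD_{\vlog}$ in the module for the $\rholog\log\rholog$ generator is at the same level of precision as the paper's own ``nearly identical'' remark.
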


\begin{proof}
  We prove the lemma in the first case; the proof is nearly identical
  in the logified setting.

  It suffices to show that $\rho, v \in
  \Psib^{-1}\cMdiff$.  To do this, note that as a composition of
  operators,
  \begin{equation*}
    \begin{pmatrix}
      \rho \pd[\rho] \\ \pd[v] \\ \pd[y]
    \end{pmatrix}
    \circ v \in
    \begin{pmatrix}
      \cMdiff \\ \vdots \\ \cMdiff
    \end{pmatrix}.
  \end{equation*}
  The vector-valued b-operator on the left has a left-invertible
  symbol and thus has a left inverse in $(\Psib^{-1}, \dots ,
  \Psib^{-1})$ modulo $\Psib^{-\infty}$, hence (since certainly
  $\Psib^{-\infty}\subset \Psib^{-1}\cMdiff$) we have
  \begin{equation*}
    v \in \Psib^{-1}\cMdiff.
  \end{equation*}
  The proof for $\rho$ proceeds in the same way.
\end{proof}

We now discuss how asymptotic expansions are transformed by the
logification process.
\begin{proposition}\label{proposition:asymptotic.log}
Let $u \in \phg{E}(M).$
For each $j \in \NN$ let 
$$
E(j) \equiv \{(z-j\imath, \ell): (z,k) \in E, 0 \leq \ell \leq k + j\}.
$$
Let
$$
E' \equiv E(0) \cup E(1)\cup E(2) \cup \dots
$$
Then $$\taut_* u \in \phg{E'}(\Mlog).$$
\end{proposition}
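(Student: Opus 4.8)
The plan is to substitute the polyhomogeneous expansion of $u$ directly into the logarithmic change of variables, Taylor-expand the coefficients, and then reorganize the resulting double series into a polyhomogeneous expansion on $\Mlog$ whose index set is checked to be exactly $E'$. Since $\rholog=\rho$ and $\vlog=v$ outside a neighborhood of $S=\{\rho=v=0\}$, and since polyhomogeneity is a local condition at $\pa M$, it suffices to work in a coordinate chart near $S_+$ (the case of $S_-$ being identical), where $\chi\equiv 1$, so that $\rholog=\rho$, $v=\vlog-\mass\rholog\log\rholog$, $\ylog=y$; the transition region $\{0<\chi<1\}$, on which $v$ is bounded away from $0$, contributes no new index points and is handled in the same way using that $\CIlog$ is closed under composition with $\CIlog$-diffeomorphisms. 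So fix the expansion
\[
u\ \sim\ \sum_{(z,k)\in E}\rho^{\imath z}(\log\rho)^k\,a_{z,k}(v,y),\qquad a_{z,k}\in\CI(\pa M),
\]
meaning that for each $A$ the difference of $u$ and the partial sum over $\Im z>-A$ lies in $\rho^{A-0}\Hb^\infty(M)$.

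Next I would expand each coefficient in the new variables: by Taylor's theorem,
\[
a_{z,k}(\vlog-\mass\rholog\log\rholog,\ylog)\ \sim\ \sum_{p\in\NN}\frac{(-\mass)^p}{p!}\,\rholog^{p}(\log\rholog)^p\,(\pa_v^p a_{z,k})(\vlog,\ylog),
\]
with $N$-th remainder an $O(\rholog^{N-0})$ conormal term. The right-hand side is precisely the polyhomogeneous expansion, with index set $\logsmoothset$, of $\taut_*a_{z,k}$, which lies in $\CIlog(\Mlog)$ because substituting the $\CIlog$ function $\vlog-\mass\rholog\log\rholog$ for $v$ keeps us in $\CIlog$. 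Transporting the remainder of $u$ itself to $\Mlog$ via the equality $\taut^*\Hb^{s,\gamma-0}(\Mlog)=\Hb^{s,\gamma-0}(M)$ of \eqref{sobolevlog} (the $-0$ losses being harmless for polyhomogeneity, whose remainders only need arbitrarily large weight), we obtain
\[
\taut_*u\ \sim\ \sum_{(z,k)\in E}\rholog^{\imath z}(\log\rholog)^k\,b_{z,k},\qquad b_{z,k}=\taut_*a_{z,k}\in\phg{\logsmoothset}(\Mlog).
\]

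Then I would regroup. Since $\rholog^{\imath z}(\log\rholog)^k\cdot\rholog^{p}(\log\rholog)^q=\rholog^{\imath(z-p\imath)}(\log\rholog)^{k+q}$, collecting the contribution of each $b_{z,k}$ over the index points $(p,q)\in\logsmoothset$ (that is, $p\in\NN$, $0\le q\le p$) yields a polyhomogeneous expansion of $\taut_*u$ with index set $\{(z-p\imath,k+q):(z,k)\in E,\ p\in\NN,\ 0\le q\le p\}$. Fixing the shift $p=j$ and using the index-set axioms --- for each $z$ the set $\{k:(z,k)\in E\}$ is a finite initial segment $\{0,\dots,K_z\}$ of $\NN$ (finiteness from discreteness of $E$) --- the log powers $k+q$ with $0\le k\le K_z$, $0\le q\le j$ fill out $\{0,1,\dots,K_z+j\}$, which is exactly the slice over $z$ of $E(j)$; taking the union over $j$ identifies the index set with $E'$. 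One checks in passing that $E'$ satisfies the index-set axioms and that for each $A$ only finitely many of the terms above have $\Im(z-p\imath)=\Im z-p>-A$ (immediate, since $\Im z>-A$ forces $(z,k)$ into a finite subset of $E$ and then $0\le p<\Im z+A$), so the regrouped series is a genuine polyhomogeneous expansion and $\taut_*u\in\phg{E'}(\Mlog)$.

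I expect the main difficulty to be the bookkeeping of remainders in the two nested asymptotic sums: one must verify that the weighted b-Sobolev remainder of $u$ on $M$, transported to $\Mlog$ by \eqref{sobolevlog}, combines with the finitely many relevant Taylor remainders of the $a_{z,k}$ to produce the correct remainder $\rholog^{A-0}\Hb^\infty(\Mlog)$ for the $E'$-expansion at each truncation level $A$ --- the uniformity of this double truncation is where care is needed. Everything else (the Taylor computation, the membership $\taut_*a_{z,k}\in\CIlog(\Mlog)$, and the combinatorics identifying the index set with $E'$) is elementary. An alternative route, which I would fall back on if the direct bookkeeping became awkward, is to run the argument through the radial-vector-field test \eqref{eq:scalingtest}, using $\taut_*(\rho D_\rho)=\rholog D_{\rholog}+\mass\rholog(\log\rholog+1)D_{\vlog}+O(\rholog^2\log\rholog)D_{\vlog}$ from the proof of Lemma~\ref{lemma:changeofmodule} to relate the scaling fields on $M$ and $\Mlog$.
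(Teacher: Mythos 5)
Your argument is correct, but it takes a genuinely different route from the paper's. You substitute the expansion directly, Taylor-expand each coefficient $a_{z,k}(\vlog-\mass\rholog\log\rholog,\ylog)$ in powers of $\rholog\log\rholog$, and regroup the resulting double series; the paper never touches the coefficients and instead runs the radial-vector-field test, using $\taut^*(\Rlog-z)^{k+1}=(\rho D_\rho-z)^{k+1}+F$ with $F\in\rho\,\Diffblog^{k+1}(M)+\rho\log\rho\,\Diffblog^{k+1}(M)$ together with the shift operation $S(G)=\{(z-\imath,k+1):(z,k)\in G\}$ to show iteratively that $\taut_* u$ has improved decay under $\prod_{(z,k)\in E'}(\Rlog-z)^{k+1}$ --- i.e.\ exactly the ``alternative route'' you offer as a fallback. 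Your computation is more transparent about where the new logarithms come from (the $p$-th Taylor term contributes $\rholog^p(\log\rholog)^p$, whence the shift-by-$j$, raise-multiplicity-by-$j$ structure of $E(j)$) and your identification of the resulting index set with $E'$ is sound, given that for each $z$ the fiber $\{k:(z,k)\in E\}$ is a finite initial segment of $\NN$ (finiteness here really comes from the axiom that $\smallabs{(z_j,k_j)}\to\infty$ forces $\Im z_j\to-\infty$, rather than from discreteness alone, but this does not affect the argument). The cost of your approach is twofold: the double-truncation remainder bookkeeping you rightly flag, which is routine but must be carried out, and the reliance on smoothness of the coefficients in $v$. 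The latter is why the paper's choice is not merely stylistic: the same transformation law is needed in Proposition~\ref{proposition:asymptotic.log2} for expansions whose coefficients are only conormal at $S_+$, where $\pa_v^p a_{z,k}$ is not controlled and the Taylor argument breaks down, whereas the vector-field test (via Proposition~\ref{proposition:asymptotic.conormal}) goes through essentially verbatim with only an $\ep$ loss of conormal order.
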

We note that an alternative definition of $E',$ since $E$ is an index
set and therefore closed under $(z,k) \to (z-\imath,k),$ is in terms
of extended unions as the set
$$
E' = E \eu E_1 \eu E_2 \eu \dots
$$
with $E_j=\{(z-j\imath,k): (z,k) \in E\}.$

\begin{proof}
We employ the method of testing by radial vector fields.  Note that
$$
\Rlog \equiv \rholog D_\rholog=\taut_* \left( \rho D_\rho-\chi m 
 \rho (1+\log \rho)  D_v\right) + \chi ' m \rholog^{2}\log \rholog (1
+ \log \rholog )D_{\vlog}.
$$
Thus, 
$$
\taut^* (\Rlog-z)^{k+1} = (\rho D_\rho-z)^{k+1} + F,
$$
where $$F \in \rho \Diffblog^{k+1}(M)+\rho \log \rho
\Diffblog^{k+1}(M),$$ and more precisely, $F$ is a sum of products of
smooth b-vector fields times coefficients containing powers of $\rho$
and $\rho \log
\rho$ between $1$ and $k+2$ (though we do not need this
characterization).

Now for any index set $G$ let $S$ denote the shift operation with
increase of multiplicity:
$$
S(G) \equiv \{(z-\imath, k+1): (z,k) \in G\}.
$$
Hence $E(j+1) = S(E(j))$ and $E'$ is closed under $S.$

Now, since application of b-vector fields preserves index sets while
multiplication by $\rho$ and $\rho \log \rho$ shifts them according to
the map $S$,
we find in general that if $w$ has index set $G$ on $M$ and if $(z,k) \in G,$ then
$$
\taut^* (\Rlog-z)^{k+1}w \in \phg{G_z}(M),
$$
where
$$
G_z = (G \setminus (z,k) ) \cup S(G) \cup S^{2}(G) \cup \dots.
$$
Letting $z$ be the value of in $G$ with
largest imaginary part, we then see that this process yields an index
set with strictly smaller imaginary parts.  (If there are several with
same imaginary part, we must of course repeat the process finitely
many times.)

Now we apply this argument iteratively to $u$: if $u$ has index set
$E,$ i.e., improved decay under application of 
$$
\prod_{(z,k) \in E} (\rho D_\rho-z)^{k+1},
$$
then
we pick $(z_0,k_0)$ with largest imaginary part (again iterating if this is
not unique) and note that
$$
\taut^* (\Rlog-z_0)^{k_0+1}u \in \phg{E_z}(M),
$$
where now $E_z$ is an index set with smaller imaginary part, and is
contained in $E'.$
Continuing inductively (and remembering at every stage that $E'$ is
conserved by $S$) we see that $u$ has improved decay under application of
$$
\taut^* \prod_{(z,k) \in E'} (\Rlog-z)^{k+1}.
$$
Pushing forward (and recalling that the scale of weighted Sobolev spaces is
essentially unchanged by $\taut_*$) we see that $\taut^* u$ has
improved decay under
$$
\prod_{(z,k) \in E'} (\Rlog-z)^{k+1},
$$
as desired.\end{proof}

We will in practice need the version of this result that deals with
the rougher expansions with coefficients conormal at $S_+.$ To this
end, we say that $u$ lies in the $L^{2}$-based conormal space
$I^{(s)}(\Lambda^{+})$ if $u \in H^{s}(X)$ and $A_{1}\dots A_{k}u \in
H^{s}(X)$ for all $k \in \NN$ and $A_{j} \in \cMdiff$.  We then have
the following
\begin{proposition}\label{proposition:asymptotic.log2}
If a distribution $u$ on $M$ conormal with respect to $N^* S_+$ enjoys
an expansion
\begin{equation}
u \sim  \sum_E a_j(v,y) \rho^{\imath \sigma_j} (\log \rho)^{k_j}
\end{equation}
with index set $E$ and with
$$
a_j \in I^{(q_0-\Re (\imath\sigma_j))}(\Lambda^{+})
$$
then on $\Mlog,$ $\taut_* u$ has an expansion
$$
\taut_* u \sim  \sum_{E'} b_j(v,y) \rho^{\imath \sigma_j} (\log \rho)^{k_j}
$$
where
$$
b_j \in I^{(q_0-\Re (\imath\sigma_j)-0)} (\Lambda^{+}).
$$
and where the index set
$$
E' \equiv E(0) \cup E(1)\cup E(2) \cup \dots
$$
with
$$
E(j) \equiv \{(z-j\imath, k+\ell): (z,k) \in E, 0 \leq \ell \leq j\},
$$
\end{proposition}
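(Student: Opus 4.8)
The plan is to follow the proof of Proposition~\ref{proposition:asymptotic.log} almost verbatim, carrying along the conormal regularity at $S_+$ at each step, and substituting Proposition~\ref{proposition:asymptotic.conormal} for the Mellin characterization of polyhomogeneity used there. Recall from the proof of Proposition~\ref{proposition:asymptotic.log} the identity
$$
\taut^*(\Rlog - z)^{k+1} = (\rho D_\rho - z)^{k+1} + F,\qquad F \in \rho\,\Diffblog^{k+1}(M) + \rho\log\rho\,\Diffblog^{k+1}(M),
$$
where moreover $F$ is a finite sum of smooth b-differential operators of order $\le k+1$ with coefficients vanishing like $\rho^{a}$ or $\rho^{a}(\log\rho)^{b}$, $1\le a\le k+2$. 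The structural point that makes the conormal bookkeeping work is that $\taut^*\Rlog = \rho D_\rho + E$ with $E$ \emph{first order} and carrying a full power of $\rho$ (its conormality-reducing part being $-\chi m\rho(1+\log\rho)D_v$ plus higher-order corrections); hence when raised to a power the number of factors of $\rho$ in each term of $F$ is at least the number of $D_v$-type (conormality-reducing) derivatives it contains. So absorbing such a term into $u$ costs one order of conormal regularity at $N^*S_+$ per accompanying power of $\rho$, and one power of $\rho$ shifts the $\rho$-index set by one step of $\sigma\mapsto\sigma-\imath$ (or, for a $\rho\log\rho$ factor, by the shift $S$), which lowers $\Re(\imath\sigma)$ by one --- exactly the rate at which the admissible conormal orders $q_0-\Re(\imath\sigma_j)$ in the target index set are permitted to drop.

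First, by Proposition~\ref{proposition:asymptotic.conormal} (applied over the finitely many $\sigma_j$ with $\Im\sigma_j\ge-A$, successively in $A$), the hypothesis is equivalent to the family of statements that $\big(\prod_{\Im\sigma_j\ge-A}(\rho D_\rho-\sigma_j)^{k_j}\big)u$ lies in $\rho^{L+A-0}\Hb^{q_0-A}(M)$ for all $A$, together with the stated conormality of the remainders at $N^*S_+$. Now one runs the inductive argument of Proposition~\ref{proposition:asymptotic.log}: choose $\sigma_0$ of largest imaginary part among the $\sigma_j\in E$ and write $\taut^*(\Rlog-\sigma_0)^{k_0}u = (\rho D_\rho-\sigma_0)^{k_0}u + Fu$. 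The first term has improved $\rho$-decay with remainder conormal at $N^*S_+$ in the appropriate class, by the reformulated hypothesis. The second term $Fu$ is, by the description of $F$ and the structural point above together with the fact that $u$ is conormal at $N^*S_+$ and polyhomogeneous in $\rho$ with index set $E$, again conormal at $N^*S_+$ and polyhomogeneous, now with index set contained in $E_{\sigma_0}\subset E'$; and the coefficient of each surviving $\rho^{\imath\sigma}(\log\rho)^k$ lies in $I^{(q_0-\Re(\imath\sigma)-0)}$, the extra $-0$ coming from the $\rho\log\rho$ (rather than $\rho$) factors in $F$. Iterating --- and using that $E'$ is closed under both shifts $\sigma\mapsto\sigma-\imath$ and $S$ --- one obtains that $u$ has improved $\rho$-decay and conormality under $\taut^*\prod_{\sigma_j\in E'}(\Rlog-\sigma_j)^{k_j}$, with coefficients controlled in $I^{(q_0-\Re(\imath\sigma_j)-0)}$, $\sigma_j$ now ranging over $E'$.

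Finally, push forward to $\Mlog$. By~\eqref{sobolevlog} and Proposition~\ref{prop:bad-module}, the scale of weighted b-Sobolev spaces and the relevant notion of conormality at $N^*S_+$ are preserved under $\taut_*$ up to an arbitrarily small loss $\rho^{-0}$, already absorbed into the $-0$ above; so $\taut_*u$ satisfies $\big(\prod_{\sigma_j\in E'}(\Rlog-\sigma_j)^{k_j}\big)\taut_*u$ in the corresponding weighted b-Sobolev space on $\Mlog$, with the same conormal-coefficient bounds. Proposition~\ref{proposition:asymptotic.conormal}, now applied on $\Mlog$, reads off the claimed expansion $\taut_*u\sim\sum_{E'}b_j(v,y)\rho^{\imath\sigma_j}(\log\rho)^{k_j}$ with $b_j\in I^{(q_0-\Re(\imath\sigma_j)-0)}$.

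The step I expect to be the main obstacle is the conormal bookkeeping in the inductive argument: making precise, in each term of $F$, the matching between the power of $\rho$ carried and the number of conormality-reducing b-derivatives applied, so that the resulting index set is contained in exactly $E'$ (no more, no less) and the conormal orders degrade no faster than is encoded in the exponents of $E'$ plus the harmless $-0$. Subsidiary points to check are that the commutators generated when normal-ordering the $\rho$-powers past the $(\rho D_\rho-\sigma_j)$ factors do not spoil this matching, and that the iteration terminates because $E'$ has only finitely many elements above any given imaginary part.
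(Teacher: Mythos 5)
Your proposal is correct and follows essentially the same route as the paper, whose proof consists of one sentence: repeat the argument of Proposition~\ref{proposition:asymptotic.log} using the conormal testing characterization of Proposition~\ref{proposition:asymptotic.conormal} in place of the smooth one, noting that the logarithmic change of variables shifts conormal orders by $\ep$ for any $\ep>0$. Your elaboration of the bookkeeping --- matching each conormality-reducing derivative in $F$ with an accompanying power of $\rho$, so that the drop in conormal order tracks the drop in $\Re(\imath\sigma_j)$, with the $-0$ absorbing the $\rho\log\rho$ factors --- is precisely the content the paper leaves implicit.
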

\begin{proof}
The proof is just as in Proposition~\ref{proposition:asymptotic.log},
using the oscillatory testing characterization (Proposition~\ref{proposition:asymptotic.conormal})
but with the additional feature that we note that the logarithmic
change of variables shifts conormal orders by $\ep$ for any
$\ep>0.$
\end{proof}

\section{The radiation field blow-up}
\label{sec:radiation-field-blow}

In this section we recall from~\cite{radfielddecay} the construction of the
manifold $[\Mlog ; S]$ on which the radiation field lives.

We now \emph{blow up} $S= \{ \vlog = \rholog = 0\}$ in $\Mlog$ by
replacing it with its inward pointing spherical normal
bundle.\footnote{The reader may wish to consult~\cite{Melrose:APS} for
  more details on the blow-up construction than we give here.}  This
process replaces $\Mlog$ with a new manifold with corners $[\Mlog; S]$
on which polar coordinates in $\rholog, \vlog$ are smooth, and depends
only on $S$ and the smooth structure of $\Mlog$.  The blow-up comes
equipped with a natural blow-down map $[\Mlog ; S] \to \Mlog$ which is
a diffeomorphism on the interior.  $[\Mlog ; S]$ is a manifold with
corners with several boundary hypersurfaces: the closure of the lifts
of $C_{0}$ and $C_{\pm}$ to $[\Mlog ; S]$, which we still denote
$C_{0}$ and $C_{\pm}$, and $\scri$, which we define as the lift of $S$
to $[\Mlog ; S]$.  Further, the fibers of $\scri$ over the base, $S$,
are diffeomorphic to intervals, and indeed, the interior of a fiber is
naturally an affine space (i.e., these interiors have $\RR$ acting by
translations, but there is no natural origin).

Given $\vlog$ and $\rholog$, the fibers of the interior of $\scri$ in
$[\Mlog ; S]$ can be identified with $\RR$ via the coordinate $s =
\vlog / \rholog$.  In particular, $\pd[s]$ is a well-defined vector
field on the fibers.

In what follows, we note that $s=\vlog/\rholog$ is a smooth
coordinate along $\scri^+,$ and $s^{-1},\ \rholog$ are respectively the
defining functions of (the lift of) $C_+$ and $\scri^+.$  If we are
interested in studying forward solutions, this corner and the two
faces meeting at it are the only places where $u$ has nontrivial asymptotics.

With the notation of the previous sections in hand, we finally restate
our main theorem in more detail:
\begin{maintheorem}
Let $(M,g)$ be a non-trapping Lorentzian scattering manifold, and let 
$$
\Box_g u=f
$$
with $u \in \CmI(M),$ $f \in \dCI(M).$  Assume that $u$ is a forward solution.
Then $u$ lifts to $[\Mlog; S]$ to have a joint polyhomogeneous expansion at
all boundary faces, vanishing except at the face $C_+$ and the front
face $\scri^+$ of the blowup of $S_+.$  At that pair of faces the
powers in the polyhomogeneous expansion are given by $\Etot$ described
above in Section~\ref{subsec:b-geometry}, hence with terms that are powers of a defining
function at $C_+$ described
in terms of poles of the family of $P_\sigma$ and at $\scri^+$
given by terms $\rho^j (\log\rho)^{\ell}$ for $\ell=0,\dots, 2j$ for $m\neq
0$ and simply by $\rho^j$ for $m=0.$
\end{maintheorem}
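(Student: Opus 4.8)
The plan is to run the scheme sketched in the introduction, assembling the results proved above. Throughout we work with the conjugated, rescaled operator $L$ of \eqref{Ldef} and with $w = \rho^{-(n-2)/2}u \in \CmI(M)$, so that $Lw = g \in \dCI(M)$; it suffices to establish the claimed polyhomogeneous expansion for $w$ on $[\Mlog; S]$ with total index set $\Etot$, since conjugating back by $\rho^{(n-2)/2}$ then gives the statement for $u$ and produces the prefactor $r^{-(n-2)/2}$ in \eqref{asymptotic.1}. Because $u$, and hence $w$, is smooth and vanishes to infinite order near $\overline{C_-}$, it has empty b-wave front set there (for every weight), so Proposition~\ref{proposition:bpropagation} propagates b-regularity along the bicharacteristics of $L$; the non-trapping hypothesis of Definition~\ref{def:non-trapping} then carries this regularity all the way to a punctured neighborhood of $\pa\cR$ in $\Sb^*M$. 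At the radial set itself Proposition~\ref{prop:radial-b-estimate} supplies conormality of $w$ and module regularity under $\cM$ up to the threshold weight $1/2$ --- this being exactly the step where the Jordan block in the linearization of $\sH_\lambda$ at $\cR$, present only for $\mass \neq 0$, forces the refined positive-commutator argument given above. The outcome is that $w$ is globally conormal both to $X$ and to $S_+$ with some weight $l < 1/2$.

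Next I would cut off near $\pa M$ and Mellin transform in $\rho$: the equation $Lw = g$ becomes $P_\sigma \tilde w = \tilde g$ up to error terms, with $P_\sigma = \widehat{N}(L)(\sigma)$ applied to a $\tilde w$ that is a priori only holomorphic in a half-plane $\Im \sigma \geq \varsigma_0$. Proposition~\ref{prop:fredholm} makes $P_\sigma$ an invertible (off its poles) holomorphic Fredholm family on the variable-order spaces $\cX^{s_\tow}\to \cY^{s_\tow-1}$, and Proposition~\ref{prop:nontrappingestimates} shows $P_\sigma^{-1}$ has only finitely many poles in each horizontal strip and polynomial growth as $\smallabs{\Re\sigma}\to\infty$. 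Since $L$ is only approximately dilation-invariant, each round of inverting $P_\sigma$ enlarges the half-plane of meromorphy of $\tilde w$ by a bounded amount, picking up at every stage both the poles of $P_\sigma^{-1}$ and their shifts by $\imath j$ --- this is the source of the extended unions $\mathcal{E}_j$ defining $\masslessres$. Inverse Mellin transform then converts a pole of $\tilde w$ of order $k$ at $\sigma = z$ into a term $\rho^{\imath z}(\log\rho)^{k-1}$, supported in $\overline{C_+}$ by Lemma~\ref{lemma:laurent}. Its coefficients, however, lie only in the conormal spaces $I^{(q_0-\Re(\imath\sigma_j))}$ of Proposition~\ref{proposition:asymptotic.conormal}, with $S_+$-regularity degrading as $\Im z$ decreases; equivalently $w$ has improved decay under $\prod_{\Im\sigma_j>-A}(\rho D_\rho-\sigma_j)^{k_j}$. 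This already yields, uniformly as $\rho/v\downarrow 0$, one of the two asymptotic expansions at the intersecting faces of \eqref{asymptotic.1}, expressed in the original smooth structure.

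The remaining task is to upgrade this to a joint polyhomogeneous expansion at both faces of the blow-up of $S_+$, and here I would pass to the logified manifold $\Mlog$ via $\taut_*$. By \eqref{sobolevlog} the weighted b-Sobolev and conormal scales are essentially unchanged; by Proposition~\ref{prop:bad-module} the module regularity of $w$ transfers, up to an $\epsilon$ loss, to regularity under the bad module $\badmod$; and by Proposition~\ref{proposition:asymptotic.log2} the partial $\rho$-expansion transforms into one with index set $\resset$ and conormal coefficients whose orders are shifted by $\epsilon$. The essential gain is Lemma~\ref{lemma:Lnormop}: on $\Mlog$ the term $-4\mass\rho D_v^2$ disappears and $\taut_* L = 4\pa_{\vlog}(\rholog\pa_{\rholog}+\vlog\pa_{\vlog})+\badmod^2$, so its leading part coincides with the short-range model, with $R := \rholog D_{\rholog}+\vlog D_{\vlog}$ playing the role of the scaling vector field about $S_+$. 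Using this model form together with the equation I would run the iterative commutator argument sketched under ``Full asymptotics'': applying $(R+\imath j)^{2j+1}\cdots(R+\imath)^3(R)$ to $\taut_* w$ improves its decay at $\scri^+$, putting the remainder in $\Hb^{\infty,N}(\Mlog)$ for any prescribed $N$, the jump from the short-range multiplicities $(R+\imath j)\cdots(R)$ being precisely the extra $\log$ terms produced by the $\badmod^2$ coefficients and accounting for the index set $\E_{\scri}=\{(-j\imath,\ell):0\leq\ell\leq 2j\}$ when $\mass\neq 0$; Lemma~\ref{lemma:inclusion} is what lets one trade vanishing at $S_+$ for regularity inside this induction. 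At this point $\taut_* w$ has improved decay under the scaling vector fields at \emph{both} the $\rholog$-face and the $\vlog$-face, each with uniform control at the other. Blowing up $S = \{\rholog=\vlog=0\}$ in $\Mlog$ to form $[\Mlog;S]$ --- on which $s = \vlog/\rholog$ is the smooth coordinate along $\scri^+$ and $s^{-1},\rholog$ are the defining functions of $C_+$ and $\scri^+$ --- the double-Mellin criterion of Proposition~\ref{proposition:doublemellin} then gives that $\taut_* w$, and hence $u$, is polyhomogeneous on $[\Mlog;S]$ with total index set $\Etot=(\resset,\E_{\scri})$, vanishing at the other boundary faces because $u$ is a forward solution.

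The main obstacle I expect is the interplay in this last step. The iterative commutator argument on $\Mlog$ must simultaneously track the two boundary hypersurfaces, absorb the $\log$-singular errors produced by the $\badmod^2$ remainder of Lemma~\ref{lemma:Lnormop} --- which is exactly what forces the higher multiplicities $(R+\imath j)^{2j+1}$ in place of the short-range $(R+\imath j)\cdots(R)$ --- and deliver a two-sided module regularity clean enough to feed into Proposition~\ref{proposition:doublemellin}; getting the bookkeeping of index sets to close under the shift-with-multiplicity operation, as in Proposition~\ref{proposition:asymptotic.log}, is the delicate point. By comparison the earlier iterative enlargement of the meromorphy half-plane of $\tilde w$, though also intricate, is essentially the short-range argument of \cite{radfielddecay} with the $\mathcal{E}_j$-shifts added in.
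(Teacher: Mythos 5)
Your outline follows the paper's architecture faithfully through the first two stages: propagation of b-regularity plus the radial-point/module estimates of Proposition~\ref{prop:radial-b-estimate}, and then the Mellin/Fredholm iteration that enlarges the half-plane of meromorphy of $\tilde w_\sigma$ and yields the expansion at $C_+$ with conormal coefficients, transferred to $\Mlog$ with index set $\resset$. The logification bookkeeping and the role of Lemma~\ref{lemma:Lnormop} are also correctly identified.

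The gap is in the final step at $\scri^+$. You assert that applying $\rad_k=\prod_j(R+\imath j)^{2j+1}$ to $\taut_*w$ ``improves its decay at $\scri^+$, putting the remainder in $\Hb^{\infty,N}(\Mlog)$.'' That is not what the commutator iteration delivers, and as stated the step does not close. What the iteration built on $\taut_*L=4\pa_{\vlog}(\rholog\pa_{\rholog}+\vlog\pa_{\vlog})+\badmod^2$ actually produces (Propositions~\ref{prop:ff-gain-in-diff} and~\ref{prop:ff-gain-in-diff-log}) is a gain of $k+1$ orders of b-\emph{differentiability}, $\cMdifflog^N\rad_k w\in \Hb^{s+(k+1),\gamma-0}(\Mlog)$, with \emph{no} improvement in the weight; the paper flags this explicitly. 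Proposition~\ref{proposition:doublemellin}, by contrast, requires improved \emph{decay} at the front face under $\rad_k$. The missing idea is the bridge between the two: since the resonant states are supported in $\overline{C_+}$ (Lemma~\ref{lemma:laurent}) and $u$ is a forward solution, $w$ vanishes to infinite order in $\varpi=\rholog/\vlog$ near $C_0$ as in~\eqref{eq:c_0-module-reg}; interpolating this with the regularity gain gives
$D_{\vlog}^{k+1}\cMdifflog^{N}\rad_{k}w\in(\rholog/\vlog)^{N'}\vlog^{-\ep}\Hb^{s,\gamma-0}$
near $C_0$ (Lemmas~\ref{lemma:ff-improvement}, \ref{lemma:ff-improvement-log}), and then integrating $k+1$ times in $\vlog$ starting from $C_0$ (and subsequently from $\vlog=-\rholog/2$ to reach the interior of the front face) converts the gained $\pa_{\vlog}$-derivatives into the vanishing factor $(\vlog+C\rholog)^{k+1}$, i.e., order-$(k+1)$ decay at $\scri^+$. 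Only after this integration step does Proposition~\ref{proposition:doublemellin} apply. In your write-up the vanishing at $C_0$ appears only as the reason the expansion is trivial there; in the actual proof it is an essential quantitative input to the expansion at $\scri^+$.
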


\section{Asymptotic expansions}
\label{sec:asympt-expans}

We are now ready to derive the asymptotic expansion of solutions to
the wave equation on $\Mlog,$ thereby proving Theorem~\ref{mainthm}.
In the case $m=0,$ such an asymptotic expansion
was derived in~\cite{radfielddecay}, but adapting the argument given
there would be rather cumbersome.  Instead, we proceed with a
different, and (we hope) more transparent, argument which in fact
yields more.

\emph{The proof will proceed in two steps, one for each boundary face
  of the radiation field blowup.}  By
Proposition~\ref{proposition:doublemellin}, it will suffice to obtain
the asymptotics at each boundary face with uniform control of error
terms at the other face.  To begin, we work at $C_+;$ while this
argument will initially appear to be a global one near $\pa M,$ the
worsening error terms at $S_+$ will mean that this first step will
yield only the asymptotics at $C_+,$ uniformly up to $\scri^+$ after the
radiation field blowup.

As we use the following spaces many times, it is convenient to
introduce a compact notation:
\begin{definition}
  For $\varsigma, s \in \RR$, we let
  \begin{equation*}
    \B (\varsigma, s) = \hol ( \CC_{\varsigma}) \cap
    \smallang{\sigma}^{-\infty}L^{\infty}L^{2}(\RR ; I^{(s)}(\Lambda^{+})).
  \end{equation*}
  Here $\hol (\CC_{\varsigma})$ is the space of holomorphic functions
  on the half-space $\CC_{\varsigma}$ defined in
  Definition~\ref{def:holo}.  
\end{definition}

\subsection{Asymptotics at $C_+$}
We start by recalling a portion of the argument of~\cite{radfielddecay}
yielding asymptotic expansions at $C_{+}$.  

As in Lemma~\ref{lemma:Lnormop}, we write the operator $L = N(L) + E$,
where $E \in \rholog \Diffb^{2}(M)$.  We let $R_{\sigma}$ be
the family of operators intertwining $E$ with the Mellin transform,
i.e., satisfying  
\begin{equation*}
  \Mellin \circ E = R_{\sigma} \circ \Mellin.
\end{equation*}
$R_{\sigma}$ is thus an operator on meromorphic families in $\sigma$
in which $\rholog D_{\rholog}$ is replaced by $\sigma$ and
multiplication by $\rholog$ translates the imaginary part.

Note that since the mass term only appears with an $O(\rho)$ relative
to the main terms in $L$ when written as a b-operator, $N(L)$ is
independent of $m,$ hence agrees with the expression found in
\cite{radfielddecay}.

By Lemma~\ref{lemma:Lnormop} we have the following result on the
mapping properties of $R_{\sigma}$.  The mapping properties of
$R_{\sigma}$ are slightly worse here than in our previous work owing
to the presence of a term of the form $\rho D_{v}^{2}$ in $L$ in the
long-range setting.

\begin{lemma}[\cite{radfielddecay}, Lemma 9.1]
  \label{lemma:remainder}
  For each $\nu, k, \ell, s$, the operator family $R_{\sigma}$ enjoys
  the following mapping properties:
  \begin{enumerate}
  \item $R_{\sigma}$ enlarges the region of holomorphy at the cost of
    regularity at $\Lambda^{+}$:
    \begin{align}
      \label{eq:remainder1}
      R_{\sigma} : &\hol(\CC_{\nu}) \cap
      \smallang{\sigma}^{-k}L^{\infty}L^{2}(\RR ;
      I^{(s)}(\Lambda^{+}))  \notag \\
      \to &\hol (\CC_{\nu + 1}) \cap \smallang{\sigma}^{-k+2}
      L^{\infty}L^{2}(\RR ; I^{(s-2)}(\Lambda^{+})) 
    \end{align}
 \item If $f_{\sigma}$ vanishes near $\overline{C_{-}}$ for $\Im
   \sigma \geq - \nu$, then $R_{\sigma} f_{\sigma}$ also vanishes
   near $\overline{C_{-}}$ for $\Im \sigma \geq -\nu - 1$.
  \end{enumerate}
\end{lemma}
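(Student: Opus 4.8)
The plan is to verify the three claimed mapping properties directly from the recipe for $R_{\sigma}$ indicated above: in the Mellin-conjugated picture $\rholog D_{\rholog}$ becomes multiplication by $\sigma$, while each factor of $\rholog$ translates $\Im\sigma$ upward by one. Concretely, I would start from the decomposition $L=N(L)+E$ with $E\in\rholog\Diffb^{2}(M)$ (this is where Lemma~\ref{lemma:LmodM} enters, the worst term being the $\rho D_{\vlog}^{2}$ coming from $-4\mass\rho\pd[v]^{2}$), write $E$ in local coordinates near $S_{+}$ as a finite sum of terms
$$
\rholog\, a_{j\beta\gamma}(\rholog,\vlog,\ylog)\,(\rholog D_{\rholog})^{j} D_{\vlog}^{\beta} D_{\ylog}^{\gamma},\qquad j+\smallabs{\beta}+\smallabs{\gamma}\le 2,\quad a_{j\beta\gamma}\in\CI,
$$
and conjugate each term by the Mellin transform \eqref{Mellin}. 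Modulo an operator entire in $\sigma$ with values in smoothing operators on $X$ (the contribution of the region where $\chi'\ne 0$), this produces the composition of: multiplication by a polynomial in $\sigma$ of degree $j\le 2$, using that $\Mellin\big((\rholog D_{\rholog})u\big)=\sigma\,\Mellin u$ up to a harmless commutator when $\rholog$ is moved past $(\rholog D_{\rholog})^{j}$; the unchanged action of $D_{\vlog}^{\beta}D_{\ylog}^{\gamma}$ on $X$; and a translation of $\Im\sigma$ — by one from the explicit factor $\rholog$, and by further nonnegative amounts after Taylor expanding $a_{j\beta\gamma}$ in $\rholog$, the $O(\rholog^{\infty})$ Taylor remainder contributing a piece holomorphic in an arbitrarily wide strip. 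This is just the standard behaviour of the Mellin transform with respect to the weight filtration on $\Diffb$, so I would cite it rather than rederive it.

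Granting this, the three properties in \eqref{eq:remainder1} fall out. The shift takes a family holomorphic on $\CC_{\nu}=\{\Im\sigma>-\nu\}$ to one holomorphic on $\{\Im\sigma>-\nu-1\}=\CC_{\nu+1}$, the extra nonnegative shifts only pushing further in. The polynomial-in-$\sigma$ prefactor turns a $\smallang{\sigma}^{-k}$ bound into a $\smallang{\sigma}^{-k+2}$ one. And $D_{\vlog}^{\beta}D_{\ylog}^{\gamma}$ costs conormal regularity at $\Lambda^{+}$: writing $D_{\vlog}=\vlog^{-1}(\vlog D_{\vlog})$ and using the inverse form of Lemma~\ref{lemma:inclusion} — multiplication by $\vlog\in\cI$ gains one order of conormal regularity at $S_{+}$, so $\vlog^{-1}$ costs one — together with the fact that $\vlog D_{\vlog}$ and $D_{\ylog}$ preserve the scale $I^{(s)}(\Lambda^{+})$, each of the at most two first-order factors costs at most one order, giving $I^{(s)}(\Lambda^{+})\to I^{(s-2)}(\Lambda^{+})$. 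No single term suffers both the full loss of two powers of $\smallang{\sigma}$ and the full loss of two conormal orders ($j=2$ forces $\beta=\gamma=0$, and $\smallabs{\beta}=2$ forces $j=0$), but every term, and hence the finite sum $R_{\sigma}$, maps into the intersection space on the right of \eqref{eq:remainder1}. Patching the local computations by a partition of unity completes part (1); away from $S_{+}$ the estimate is strictly easier, since there $I^{(s)}(\Lambda^{+})=H^{s}$ and $E$ is just a weighted b-operator of order $2$.

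For part (2) I would use locality of the differential operator $E$: $\supp(Ew)\subset\supp w$. In the Mellin picture, $R_{\sigma}f_{\sigma}(y)$ is a finite sum of $\vlog$- and $\ylog$-derivatives of the various $f_{\sigma'}(y)$ with $\CI$ coefficients, where each $\sigma'$ differs from $\sigma$ by $\imath m$ with $m\ge 1$ (the unit from the explicit $\rholog$, plus the nonnegative shifts from the coefficients). Hence if $f_{\sigma}$ vanishes on a fixed neighbourhood $\mathcal{U}$ of $\overline{C_{-}}$ in $X$ whenever $\Im\sigma\ge-\nu$, then for $\Im\sigma\ge-\nu-1$ all the relevant $f_{\sigma'}$ have $\Im\sigma'\ge-\nu$ and so vanish on $\mathcal{U}$; since $\vlog$ and the $\ylog_{k}$ are genuine coordinates on $X$ near $\overline{C_{-}}$ and $\mathcal{U}$ is open, all derivatives of $f_{\sigma'}$ vanish on $\mathcal{U}$ as well, and therefore so does $R_{\sigma}f_{\sigma}$.

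The only real obstacle is bookkeeping: one must check that the commutators produced when moving $\rholog$ past $(\rholog D_{\rholog})^{j}$ and past the coefficients affect none of the three counts, and that the $\rholog$-Taylor remainder of the coefficients is organised so that the holomorphy half-plane is enlarged by exactly one. All of this is carried out as in the proof of the corresponding lemma of \cite{radfielddecay}; the one genuinely new point is tracking the extra $D_{\vlog}$ supplied by the $-4\mass\rho D_{\vlog}^{2}$ term of $L$, which is what turns the losses of "one" there into the losses of "two" here.
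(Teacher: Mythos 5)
Your proof is correct and is exactly the argument intended here: the paper itself gives no proof of this lemma, simply citing Lemma~9.1 of \cite{radfielddecay} and remarking that the extra $\rho D_v^2$ term worsens the conormal loss from one order to two, which is precisely the point you isolate at the end. The only cosmetic quibble is the step ``$D_{\vlog}=\vlog^{-1}(\vlog D_{\vlog})$'' for the loss of one conormal order, which is cleaner if argued by commuting $D_{\vlog}$ through the module generators (as the paper does elsewhere, e.g.\ in the proof of Proposition~\ref{prop:ff-gain-in-diff}) rather than via the singular factor $\vlog^{-1}$.
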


As discussed above, we transform the equation
$$
\Box_g u=f
$$
by rescaling and conjugation to rewrite it as
$$
L w=g
$$
where
\begin{equation}
L\equiv \rho^{-(n-2)/2-2}\Box_g \rho^{(n-2)/2},
\end{equation}
$$
w=\rho^{-(n-2)/2} u\in \CmI(M) ,\quad g=\rho^{-(n-2)/2-2} f \in \dCI(M).
$$

Thus, suppose $Lw = g$, where $g \in \dCI (M)$ and $u$ vanishes
in a neighborhood of $\overline{C_{-}}$.  Taking the Mellin transform,
we obtain
\begin{equation}
  \label{eq:toiterate}
  P_{\sigma} \tilde{w}_{\sigma} = \tilde{g}_{\sigma} - R_{\sigma} \tilde{w}_{\sigma}.
\end{equation}
As $g\in \dCI(M)$, we have
\begin{equation*}
  \tilde{g}_{\sigma} \in \B (C, s') \text{ for all }C, s'.
\end{equation*}
Because $\rho^{(n-2)/2}w$ lies in some $\Hb^{s,\gamma}(M)$, we
have
\begin{equation}
  \label{eq:utildesobolev}
  \tilde{w}_{\sigma} \in \hol (\CC_{\varsigma_{0}}) \cap
  \smallang{\sigma}^{\max(0,-s)}L^{\infty}L^{2}(\RR ; H^{s}),
\end{equation}
where $\varsigma_{0} = \gamma - (n-2)/2$.  By reducing $s$, we may
assume that $s+ \gamma < 1/2$ so as to be able to apply the module
regularity results of Proposition~\ref{prop:radial-b-estimate}.  We
may also arrange that $\tilde{w}_{\sigma}$ vanishes in a neighborhood
of $\overline{C_{-}}$ in $X$ because, by hypothesis, $w$ vanishes near
$\overline{C_{-}}$ in $M$.

Because the metric is non-trapping, we know that $w$ has module
regularity with respect to $\cM$, and so by~\cite[Lemma 2.3]{radfielddecay},
\begin{equation*}
  \tilde{w}_{\sigma} \in \B(\varsigma_{0}, - \infty),
\end{equation*}
and thus, by interpolation with~\eqref{eq:utildesobolev},
\begin{equation*}
  \tilde{w}_{\sigma} \in \B (\varsigma_{0}, s-0).
\end{equation*}
In particular, $R_{\sigma}\tilde{w}_{\sigma}$ (and hence
$P_{\sigma}\tilde{w}_{\sigma}$) lies in
\begin{equation*}
  \B (\varsigma_{0} + 1, s-2 - 0)
\end{equation*}

Because $P_{\sigma}\tilde{w}_{\sigma}$ is known to be holomorphic in a
larger half-plane, we can now invert $P_{\sigma}$ to obtain meromorphy
of $\tilde{w}_{\sigma}$ on this larger space: by
Propositions~\ref{prop:fredholm} and \ref{prop:nontrappingestimates},
$P_{\sigma}$ is Fredholm as a map
\begin{equation*}
  \cX^{s_{\tow}}\to \cY^{s_{\tow}-1}, 
\end{equation*}
and $P_{\sigma}^{-1}$ has finitely many poles in any horizontal strip
$\Im \sigma \in [a,b]$.  Moreover, $P_{\sigma}^{-1}$ satisfies
polynomial growth estimates as $\smallabs{\Re \sigma}\to \infty$.
Here we recall from Section~\ref{sec:Fredholm}
that given any $\varsigma '$, in order for $P_{\sigma}$ to be Fredholm
for $\sigma \in \CC_{\varsigma'}$, the (constant) value $s(S_{+})$
assumed by the variable Sobolev order $s_{\tow}$ near $S_{+}$ must
satisfy $s(S_{+}) < 1/2 - \varsigma'$; thus as one enlarges the domain
of meromorphy for $\tilde{w}_{\sigma}$, one needs to relax the control
of the derivatives.  Thus $\tilde{w}_{\sigma}$ is obtained by applying
$P_{\sigma}^{-1}$ to the right hand side
of~\eqref{eq:remainder1}; this term is
meromorphic in $\CC_{\varsigma_{0}+1}$ with values in 
\begin{equation}
  \label{eq:rhs1}
  \smallang{\sigma}^{-\infty}L^{\infty}L^{2}(\RR ; H^{\min (s-1-0, 1/2-\varsigma_{0}-1-0)})
\end{equation}
with (finitely many) poles in this strip, arising from the poles of
$P_{\sigma}^{-1}$.  Here (and below) we
are ignoring the distinction between $\cX^{s_{\tow}}$ and $H^{s}$ as
$\tilde{w}_{\sigma}$ is trivial by hypothesis on the set where the
regularity in the variable-order Sobolev space differs from $H^{s}$.

Now we can improve our description of the remainder terms (going back
to Lemma~\ref{lemma:remainder} for the description of
$R_\sigma \tilde{w}_\sigma$) since $P_{\sigma}$ maps the expression
in question to
$\smallang{\sigma}^{-\infty}L^{\infty}L^{2}(\RR ; I^{(s-2-0)}(\Lambda^{+}))$.  Thus
the term~\eqref{eq:rhs1} must in fact be
meromorphic with values in the \emph{conormal} space
\begin{align*}
  &\smallang{\sigma}^{-\infty}L^{\infty}L^{2}(\RR ; I^{\min (s-1-0,
    1/2-\varsigma_{0}-1-0)}(\Lambda^{+})),
\end{align*}
by propagation of singularities away from radial points
(Proposition~4.1 of~\cite{radfielddecay}) and the first case of Theorem~6.3
of~\cite{Haber-Vasy:Radial}, which deals with propagation of
Lagrangian regularity into conic Lagrangian submanifolds of radial points.\footnote{Here Theorem~6.3 is applied
  pointwise in $\sigma$; the result there is not stated in terms of
  bounds (just as a membership in the claimed set), but just as in the
  case of Proposition~\ref{prop:radial-b-estimate} here, estimates can
  be recovered from the statement of Theorem~6.3 by the closed graph
  theorem or alternatively recovered from examination of the proof,
  which proceeds via such estimates.}

Thus we have now shown that
\begin{align}
  \label{eq:tu-step-1}
  \tilde{w}_{\sigma} \in &\B(\varsigma_{0}+1, \min (s-1-0,
  1/2-\varsigma_{0}-1-0)) \\
  &+ \sum_{\substack{(\sigma_j, m_j) \in \ressetinit \\ \Im \sigma_{j}
    > -\varsigma_{0}-1}}(\sigma - \sigma_{j})^{-m_{j}} a_{j},
\end{align}
where
\begin{equation*}
  a_{j} \in \B (\varsigma_{0}+1, \Im\sigma _{j} + 1/2 - 0).
\end{equation*}
Here the conormal regularity of the coefficients of the polar part
follows from the Cauchy integral formula.

We now iterate this argument as follows.  (The argument
is simpler than the analogous argument in \cite{radfielddecay}, as we will
allow derivative losses in our conormal spaces that we will recoup
later.)

Assume inductively that
\begin{align}
  \label{eq:indhypothesis}
  \tilde{w}_{\sigma} &\in \B (\varsigma_{0} +N, \min (s-N-0,
  1/2-\varsigma_{0}-N-0)) + \dots  \\
  &\quad  + \sum_{\substack{(\sigma_j, m_j) \in \mathcal{E}_0\eu \dots
      \eu \mathcal{E}_{N},\\ \Im\sigma_{j} > -\varsigma_{0} - N}}(\sigma -
  \sigma_{j})^{-m_{j}}a_{j}, \notag
\end{align}
with
\begin{equation*}
  a_{j}\in \B (\varsigma_{0} + 2N, 1/2+ \Im \sigma_{j}-0).
\end{equation*}
By Lemma~\ref{lemma:remainder},
\begin{align}
  \label{eq:Rsigmaiterate}
  R_{\sigma}\tilde{w}_{\sigma} &\in \B(\varsigma_{0} + N + 1, \min
  (s-N-2-0, 1/2-\varsigma_{0}-N-1-0)) + \dots \\
  &\quad + \sum_{\substack{(\sigma_j, m_j)\in \mathcal{E}_0\eu \dots
      \eu \mathcal{E}_{N}\\ \Im\sigma_{j}> -\varsigma_{0}-N}} (\sigma -
  (\sigma_{j}-\imath))^{-m_{j,1}}a_{j}'\notag\\
\end{align}
where
\begin{align*}
  &a_{j}'\in \B(\varsigma_{0} + N, 1/2 + \Im (\sigma_{j} - \imath)-0),
\end{align*}
We remark that all the shifted poles in the above expressions lie in
the new index set
$$
\mathcal{E}_1\eu \dots \eu \mathcal{E}_{N+1}.
$$
Now we may apply $P_{\sigma}^{-1}$ as above to
solve for $\tilde{w}_{\sigma}$ on the left hand side
of~\eqref{eq:toiterate} and find that~\eqref{eq:indhypothesis} holds
for all $N,$ since the new poles introduced by the operator family are
given by the extended union with $\mathcal{E}_0$.  Inverse Mellin transforming this result
then yields the following asymptotic expansion.
We thus have the following:
\begin{proposition}
  \label{proposition:asympeq}
  Let $\masslessres$ be the massless resonance index
  set (with $\varsigma_{0}$ chosen to ignore those resonances where
  $\tilde{w}_{\sigma}$ is a priori holomorphic).  Then on $M,$
  \begin{equation*}
    w = \sum_{\substack{(\sigma_j,k)\in \masslessres\\  \Im\sigma_j >
      -l}}\rholog^{\imath\sigma_j}(\log\rholog)^{k}a_{jk} + w',
  \end{equation*}
  where, for $C = s + \varsigma_{0}$, 
  \begin{equation*}
    w' \in \rholog^{l}\Hb^{\min(C-l-0, 1/2-\varsigma_{0}-l-0),\gamma}(M).
  \end{equation*}
 The coefficients
  $a_{jk}$ are $\CI$ functions of $\rholog$ taking values in
  $I^{(1/2-\Re(\imath\sigma_{j})-0)}$ and are supported in
  $\overline{C_{+}}$.

  Moreover on the logified manifold $\Mlog$ we have
  \begin{equation*}
    \taut_* w = \sum_{\substack{(\sigma_j,k)\in \resset\\  \Im\sigma_j >
        -l}}\rholog^{\imath\sigma_j}(\log\rholog)^{k}b_{jk} + w',
  \end{equation*}
  where the coefficients $b_{jk}$ have the same properties as the
  $a_{jk}$ and
  \begin{equation*}
    w' \in \rholog^{l}\Hb^{\min(C-l-0, 1/2-\varsigma_{0}-l-0),\gamma}(\Mlog).
  \end{equation*}
\end{proposition}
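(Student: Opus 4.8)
The plan is to establish the two displayed assertions in turn, deriving the expansion of $w$ on $M$ first and then transferring it to $\Mlog$ via the logification results of Section~\ref{sec:logification}. For the $M$-expansion I would complete the induction begun just above: assuming \eqref{eq:indhypothesis} at level $N$, Lemma~\ref{lemma:remainder} produces \eqref{eq:Rsigmaiterate}, so the right-hand side of \eqref{eq:toiterate} is meromorphic in $\CC_{\varsigma_0+N+1}$ with poles contained in $\mathcal{E}_0\eu\cdots\eu\mathcal{E}_{N+1}$. Applying $P_\sigma^{-1}$ — which exists as a meromorphic family with finitely many poles per strip and polynomial growth in $\Re\sigma$ by Propositions~\ref{prop:fredholm} and~\ref{prop:nontrappingestimates}, at the cost of relaxing the Sobolev order assumed near $S_+$ so as to respect the constraint $s(S_+)<1/2-\varsigma'$ — and then upgrading the resulting Sobolev membership to conormality at $\Lambda^+$ by propagation of Lagrangian regularity away from and into the radial points (Proposition~4.1 of~\cite{radfielddecay} together with Theorem~6.3 of~\cite{Haber-Vasy:Radial}), I obtain \eqref{eq:indhypothesis} at level $N+1$; the only new poles are those created by the extended union with $\mathcal{E}_0$, so the pole set, truncated to $\Im\sigma>-l$, stabilizes to exactly $\masslessres$. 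Hence \eqref{eq:indhypothesis} holds for all $N$.

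Next I would invert the Mellin transform. Fixing $l$, taking $N$ large, and shifting the inverse-Mellin contour from $\Im\sigma=\varsigma_0$ down to $\Im\sigma=-l$ — the $\smallang{\sigma}^{-\infty}$ decay built into the spaces $\B(\cdot,\cdot)$ legitimizing the shift — the residue theorem yields $w=\sum_{\Im\sigma_j>-l}\rholog^{\imath\sigma_j}(\log\rholog)^k a_{jk}+w'$. Each $a_{jk}$ arises from a Laurent coefficient of $\tilde w_\sigma$ at $\sigma_j$ (carrying along the cutoff $\chi$ used in the Mellin transform, which accounts for the smooth $\rholog$-dependence), while $w'$ is the inverse Mellin transform of $\tilde w_\sigma|_{\Im\sigma=-l}$, which the $L^2$ (Plancherel) form of Melrose's Proposition~5.27, applied to the $\B$-membership in \eqref{eq:indhypothesis}, places in $\rholog^{l}\Hb^{\min(C-l-0,\,1/2-\varsigma_0-l-0),\gamma}(M)$ with $C=s+\varsigma_0$. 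The conormal order $I^{(1/2-\Re(\imath\sigma_j)-0)}$ of the $a_{jk}$ is then read off by tracking orders through Lemma~\ref{lemma:remainder} and the threshold $1/2+\Im\sigma_j$, using that $P_\sigma^{-1}$ and $R_\sigma$ preserve conormality at $\Lambda^+$ up to an $\ep$-loss, together with the Cauchy integral formula; and support of each $a_{jk}$ in $\overline{C_+}$ follows from Lemma~\ref{lemma:laurent}, since $\tilde g_\sigma$ and $R_\sigma\tilde w_\sigma$ vanish near $\overline{C_-}$.

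For the second assertion I would simply apply Proposition~\ref{proposition:asymptotic.log2} to the $M$-expansion just obtained, which is exactly of the conormal-in-$S_+$, polyhomogeneous-in-$\rholog$ type handled there. With $E=\masslessres$, the pushforward index set $E'=\bigcup_{j\ge 0}\{(\sigma-j\imath,\,k+\ell):(\sigma,k)\in\masslessres,\ 0\le\ell\le j\}$ is, by its definition in Section~\ref{subsec:b-geometry}, precisely $\resset$. The coefficients $b_{jk}$ inherit the conormal orders of the $a_{jk}$ with one further $\ep$-loss, which is absorbed into the $-0$; they remain supported in $\overline{C_+}$ because $\taut$ is the identity on $M^\circ$ and $\vlog|_X=v$; and the remainder stays in $\rholog^{l}\Hb^{\min(C-l-0,\,1/2-\varsigma_0-l-0),\gamma}(\Mlog)$ by \eqref{sobolevlog}.

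The main obstacle will be the bookkeeping inside the iteration: one must simultaneously track which resonance shifts $\mathcal{E}_j$ contribute and the increase in pole order recorded by the extended union, the cumulative $\ep$-losses of conormal order incurred at each application of $R_\sigma$ and $P_\sigma^{-1}$, and — most delicately — the forced coupling between how far one may push the half-plane of meromorphy and how much Sobolev regularity near $S_+$ one is then permitted to retain, as dictated by Proposition~\ref{prop:fredholm}. The other point needing care is the passage from ``meromorphic with polynomial control in $\sigma$'' to a genuine asymptotic expansion with remainder in a weighted b-Sobolev space, i.e.\ the quantitative form of Mellin inversion, though this step is by now routine.
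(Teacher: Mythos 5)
Your proposal is correct and follows essentially the same route as the paper: the iterative improvement of \eqref{eq:indhypothesis} via Lemma~\ref{lemma:remainder} and $P_\sigma^{-1}$, inverse Mellin transform (poles giving the terms $\rholog^{\imath\sigma_j}(\log\rholog)^k a_{jk}$, the holomorphic part giving $w'$, Lemma~\ref{lemma:laurent} giving the support in $\overline{C_+}$), and then Proposition~\ref{proposition:asymptotic.log2} to pass to $\Mlog$ with index set $\resset$. The paper's own proof is a two-sentence summary of exactly these steps, which you have fleshed out accurately.
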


\begin{proof}
The proposition follows by taking the inverse Mellin transform of
  $P_{\sigma}^{-1}$ applied to~\eqref{eq:Rsigmaiterate}: the polar
  terms yield the terms in the sum, while the ``remainder'' term
  arises from the first $2N$ terms in~\eqref{eq:Rsigmaiterate}.  This
  yields the expansion with index set $\masslessres$ on $M.$ Now
  applying Proposition~\ref{proposition:asymptotic.log2} gives us the
  corresponding expansion with index set $\resset$ on $\Mlog.$
\end{proof}

\begin{remark}
Note that the
expansion appears somewhat unsatisfactory as the conormal
regularity declines as the power of $\rholog$ increases, but we will
cope with this inconvenience later.
\end{remark}

\begin{remark}
  We remark that we can recover a form of Price's law in the setting
  of very short-range perturbations of Minkowski space.  If $(M,g)$
  decays sufficiently quickly to Minkowski space, then the induced
  operator $P_{\sigma}$ on the boundary agrees with the operator in
  Minkowski space.  The poles of $P_{\sigma}^{-1}$ can be computed
  explicitly and lie at $-\imath \frac{n-1}{2} - \imath j$, where
  $j\in \mathbb{N}$; when the spacetime dimension is odd the resonant
  states corresponding to these poles are supported on $S_{+}$.  After
  applying the non-local operator $P_{\sigma}^{-1}$ to the residue
  from one of these poles, the pole shifts down and the new residue
  (denoted $a_{jk}$ in Proposition 9.3) is supported in
  $\overline{C_{+}}$.  The consequences of this spreading of support
  are discussed below in Remark~\ref{rem:price}.
\end{remark}

We now continue our discussion of asymptotic expansions working
exclusively on $\Mlog.$  In what follows, though the exact value of
the constant $C$ is irrelevant, it may be taken to be $s + \varsigma_{0}$.

As a consequence of Proposition~\ref{proposition:asympeq}, we have
\begin{equation*}
  w' = \left( \prod_{(\sigma_{j}, k)\in \resset(\varsigma_{0}),
      \Im\sigma_{j}> -l} (\rholog D_{\rholog} - \sigma_{j})\right) w \in \rholog^{l}\Hb^{\min (C-l-0,
    1/2-\varsigma_{0}-l-0), \gamma}(M).
\end{equation*}
Now by Proposition~\ref{prop:radial-b-estimate}, $w$ enjoys module
regularity with respect to $\rholog^{l}\Hb^{s',\gamma}(\Mlog)$ for
some $s'$.  Thus for all $N'$,
\begin{equation*}
  \cMlog^{N'}\left( \prod_{(\sigma_{j}, k)\in \resset(\varsigma_{0}),
      \Im\sigma_{j}> -l} (\rholog D_{\rholog} - \sigma_{j})\right) w
  \in \rholog^{l}\Hb^{s',\gamma}(\Mlog);
\end{equation*}
here we have of course used the fact that all the factors $(\rholog
D_{\rholog} - \sigma_{j})$ lie in $\cMlog$.  Interpolation now yields for
all $N$
\begin{equation*}
  \cMlog^{N}\left( \prod_{(\sigma_{j}, k)\in \resset(\varsigma_{0}),
      \Im\sigma_{j}> -l} (\rholog D_{\rholog} - \sigma_{j})\right) w
  \in \rholog^{l}\Hb^{\min (C-l-0,
    1/2-\varsigma_{0}-l-0), \gamma}(\Mlog).
\end{equation*}
Now $\cM$ includes a basis of vector fields in $\mathcal{V}_{b}(\Mlog)$
with the exception of $\pd[\vlog]$,  but $\vlog \pd[\vlog]$ is in
$\cM$.  This leads to:
\begin{lemma}
  \label{lemma:moduletosobolev}
  If $\cMlog^{\ell}w\in\Hb^{p,q}(\Mlog)$, then $\vlog^{\ell}w\in
  \Hb^{p+\ell,q}(\Mlog)$.  More generally, if $\cMlog^{N+\ell}w\in
  \Hb^{p,q}(\Mlog)$, then $\cMlog^{N}\vlog^{\ell}w\in\Hb^{p+\ell,q}(\Mlog)$.
\end{lemma}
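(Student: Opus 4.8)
The plan is to derive this from Lemma~\ref{lemma:inclusion} and the filtered-module structure of the b-calculus on $\Mlog$, using b-pseudodifferential operators to realize the gain of $\ell$ derivatives at the general (possibly non-integer) order $p$. The one input beyond formal manipulations is this: $\vlog$ is smooth on $\Mlog$ and vanishes at $S_{+}$, so $\vlog\in\cIlog$, and therefore, by Lemma~\ref{lemma:inclusion} read as an identity of operators, multiplication by $\vlog$ lies in $\Psib^{-1}(\Mlog)\cMlog$.

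First I would record a formal fact about $\cMlog$: since $I\in\cMlog$ and the commutator of a b-operator of order $1$ with one of order $-\ell$ has order $\le -\ell$, one has $\cMlog^{N}\,\Psib^{-\ell}(\Mlog)\subseteq\Psib^{-\ell}(\Mlog)\,\cMlog^{N}$ for all $N$ and $\ell$ (induct on $N$, absorbing each commutator into $\Psib^{-\ell}(\Mlog)=\Psib^{-\ell}(\Mlog)I\subseteq\Psib^{-\ell}(\Mlog)\cMlog$), and similarly $(\Psib^{-1}(\Mlog)\cMlog)^{k}\subseteq\Psib^{-k}(\Mlog)\cMlog^{k}$. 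Combining this with the previous observation, multiplication by $\vlog^{\ell}$ lies in $\Psib^{-\ell}(\Mlog)\cMlog^{\ell}$, and hence for any $A\in\cMlog^{N}$,
\[
  A\circ(\text{mult.\ by }\vlog^{\ell})\ \in\ \cMlog^{N}\,\Psib^{-\ell}(\Mlog)\,\cMlog^{\ell}\ \subseteq\ \Psib^{-\ell}(\Mlog)\,\cMlog^{N+\ell}.
\]
Applying both sides to $w$ gives $A(\vlog^{\ell}w)=\sum_{j}\Lambda_{j}B_{j}w$ with $\Lambda_{j}\in\Psib^{-\ell}(\Mlog)$ and $B_{j}\in\cMlog^{N+\ell}$. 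Under the hypothesis $\cMlog^{N+\ell}w\in\Hb^{p,q}(\Mlog)$ we get $B_{j}w\in\Hb^{p,q}(\Mlog)$; since $\Psib^{-\ell}(\Mlog)$ maps $\Hb^{p,q}\to\Hb^{p+\ell,q}$ (the weight being untouched), each $\Lambda_{j}B_{j}w\in\Hb^{p+\ell,q}(\Mlog)$, so $A(\vlog^{\ell}w)\in\Hb^{p+\ell,q}(\Mlog)$. As $A\in\cMlog^{N}$ was arbitrary this is the ``more generally'' assertion, and $N=0$ is the first; no derivative or weight is lost.

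The point requiring genuine care --- and the only real obstacle --- is that $\cMlog$ carries coefficients in $\CIlog$ rather than in $\CI(\Mlog)$, so the compositions and commutators above must be interpreted in the b-calculus with conormal (not smooth) coefficients. Here one checks that $\CIlog$ consists of b-conormal functions of order $0$ (bounded, with all iterated $\Vb$-derivatives bounded --- which follows by induction, since $\Vb$ preserves $\CIlog$), so multiplication by a $\CIlog$ function is bounded on every $\Hb^{s,q}(\Mlog)$ and the commutator inclusion $[\Psib^{-\ell}(\Mlog),\cMlog]\subseteq\Psib^{-\ell}(\Mlog)$ used above survives in this slightly rougher class. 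Everything else is routine bookkeeping with the module filtration and the mapping properties of $\Psib^{-\ell}$.
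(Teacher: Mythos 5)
Your argument is correct, but it is a genuinely different (and heavier) route than the one the paper takes. The paper's proof is a one-line direct computation: since $D_{\vlog}\vlog\in\cMlog$, any composition $(\rholog D_{\rholog})^{\alpha}D_{\ylog}^{\beta}D_{\vlog}^{\gamma}\circ\vlog^{\ell}$ with $\alpha+\smallabs{\beta}+\gamma\leq\ell$ lies in $\cMlog^{\ell}$ (each $D_{\vlog}$ that reaches $w$ gets paired with one of the $\ell$ factors of $\vlog$ to form $\vlog D_{\vlog}$, and the remaining generators are already in the module), so the hypothesis $\cMlog^{\ell}w\in\Hb^{p,q}$ together with the characterization of $\Hb^{p+\ell,q}$ by $\ell$-fold application of $\Vb$ relative to $\Hb^{p,q}$ gives the claim immediately; the case with an extra $\cMlog^{N}$ is identical. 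You instead factor multiplication by $\vlog^{\ell}$ through Lemma~\ref{lemma:inclusion} as an element of $\Psib^{-\ell}\cMlog^{\ell}$ and then commute $\cMlog^{N}$ past $\Psib^{-\ell}$. What your version buys is uniformity and generality: the same three lines prove the statement with $\vlog^{\ell}$ replaced by any $\ell$-fold product of elements of $\cIlog$, and it is stylistically of a piece with Lemma~\ref{lemma:rad-factors-through-module}. What it costs is exactly the point you flag: the inclusions $[\Psib^{-\ell}(\Mlog),\cMlog]\subset\Psib^{-\ell}(\Mlog)$ and $\cMlog\Psib^{-\ell}\subset\Psib^{-\ell}\cMlog$ require a b-calculus whose coefficient ring is $\CIlog$ rather than $\CI$, since $[\Lambda,a]V$ with $a\in\CIlog$ must gain the extra order from the commutator. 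This is true --- $\CIlog$ consists of bounded conormal functions, for which the one-term symbolic commutator expansion is standard --- but the paper deliberately avoids developing the $\CIlog$-coefficient pseudodifferential calculus (it only ever uses $\CIlog$ coefficients on differential operators), and its elementary proof of this lemma needs none of it. So your proof is acceptable provided you regard the conormal-coefficient commutator estimate as known; the paper's proof sidesteps the issue entirely.
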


\begin{proof}
  Since $D_{\vlog}\vlog\in \cMlog$, we have
  \begin{equation*}
    (\rholog
    D_{\rholog})^{\alpha}D_{\ylog}^{\beta}D_{\vlog}^{\gamma}\vlog^{\ell}\in
    \cMlog ,
  \end{equation*}
  provided $\gamma \leq \ell$, hence by our assumed module regularity,
  \begin{equation*}
    (\rholog D_{\rholog})^{\alpha}D_{\ylog}^{\beta}D_{\vlog}^{\gamma}
    v^{\ell}w \in \Hb^{p,q}(M),
  \end{equation*}
  provided $\alpha + \smallabs{\beta} + \gamma \leq \ell$.
\end{proof}

Applying Lemma~\ref{lemma:moduletosobolev} now yields, for all $N$,
\begin{equation*}
  \cMlog^{N}\vlog^{l}\left( \prod_{(\sigma_{j},k)\in
      \resset(\varsigma_{0}), \Im\sigma_{j} > -l}(\rholog D_{\rholog}
    - \sigma_{j})\right) w \in \rholog^{l}\Hb^{\min (C-0,
    1/2-\varsigma_{0}-0), \gamma}(\Mlog).
\end{equation*}
Since $\rholog^{-l}$ commutes with all generators of $\cMlog$ except
$\rholog D_{\rholog}$ and since $\rholog D_{\rholog}\rholog^{-l} =
\rholog^{-l}\rholog D_{\rholog} + \imath l \rholog^{-l}$, induction on
$N$ shows that we may commute $\rho^{-l}$ through the module factors
to obtain
\begin{equation*}
  \cMlog^{N}\rholog^{-l}\vlog^{l}\left(\prod_{(\sigma_{j}, k) \in
      \resset(\varsigma_{0}), \Im\sigma_{j} > -l}(\rholog D_{\rholog}
    - \sigma_{j}) \right) w \in \Hb^{\min (C-0,
    1/2-\varsigma_{0}-0), \gamma}(\Mlog).
\end{equation*}
In other words, if we set $$\varpi = \rholog / \vlog$$ (ignoring $|\vlog / \rholog| <
1$ for notational convenience) is the defining function of the side
faces $C_{+}$ and $C_{0}$ in the blow-up $[\Mlog ; S]$,
\begin{equation*}
  \cMlog^{N}\varpi^{-l}\left(\prod_{(\sigma_{j}, k) \in
      \resset(\varsigma_{0}), \Im\sigma_{j} > -l}(\rholog D_{\rholog}
    - \sigma_{j}) \right) w \in \Hb^{\min (C-0,
    1/2-\varsigma_{0}-0), \gamma}(\Mlog),
\end{equation*}
which is to say, switching over entirely to coordinates $\varpi=\rholog/\vlog$, $\vlog$,
and $\ylog$ valid in a neighborhood of $C_{+}$ including the corner
$C_{+}\cap \scri$, we finally have the following:
\begin{proposition}\label{proposition:Cplus}
On $C_+,$ uniformly up to the corner $C_+\cap \scri^+$ in $[\Mlog;
S_+],$ $w$ enjoys an asymptotic expansion with powers given by the
resonance index set:
\begin{equation*}
  \left(\prod_{(\sigma_{k},k) \in \resset (\varsigma_{0}),
      \Im\sigma_{j} > - l} (\varpi D_{\varpi} - \sigma_{j})\right) w
  \in \varpi^{l}\Hb^{\infty, *, *}([\Mlog ; S]), 
\end{equation*}
where the $*$'s represent fixed (i.e., independent of $l$) growth
orders.
\end{proposition}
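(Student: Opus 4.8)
The plan is to read Proposition~\ref{proposition:Cplus} off the chain of containments built up in this section, whose last link is
$$\cMlog^{N}\,\varpi^{-l}\Bigl(\prod_{(\sigma_{j},k)\in\resset(\varsigma_{0}),\ \Im\sigma_{j}>-l}(\rholog D_{\rholog}-\sigma_{j})\Bigr)w\in\Hb^{\min(C-0,\,1/2-\varsigma_{0}-0),\,\gamma}(\Mlog)\qquad(\forall N).$$
The only substantive observation needed is that, near $C_+$, regularity under the module $\cMlog$ is the same thing as b-regularity on the blown-up manifold $[\Mlog;S]$. Granting that, the quantifier ``$\forall N$'' above upgrades the fixed finite b-Sobolev order to $\infty$, the fixed weight $\gamma$ produces the fixed (i.e.\ $l$-independent) growth orders $*,*$ at the two faces $C_+$ and $\scri^+$, and the explicit prefactor $\varpi^{-l}$ is what carries the improving decay at $C_+$.

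First I would make the module identification precise. Near the corner $C_+\cap\scri^+$ of $[\Mlog;S]$ the functions $\varpi=\rholog/\vlog$, $\vlog$, $\ylog$ are valid coordinates, with $\varpi$ a boundary defining function for (the lift of) $C_+$ and $\vlog$ one for $\scri^+$, and $\rholog=\varpi\vlog$. A one-line change-of-variables computation gives $\rholog D_{\rholog}=\varpi D_{\varpi}$, exhibits the old $\vlog D_{\vlog}$ as $\vlog D_{\vlog}-\varpi D_{\varpi}$ and the old $\rholog D_{\vlog}$ as $\varpi\bigl(\vlog D_{\vlog}-\varpi D_{\varpi}\bigr)$ in the new coordinates, while $D_{\ylog}$ is unchanged; conversely $\varpi D_{\varpi}$, $\vlog D_{\vlog}$ and $D_{\ylog}$ are manifestly $\CIlog([\Mlog;S])$-combinations of the generators $\rholog D_{\rholog}$, $\rholog D_{\vlog}$, $\vlog D_{\vlog}$, $D_{\ylog}$ of $\cMlog$. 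Hence, on a neighborhood of $C_+$ in $[\Mlog;S]$ that includes the corner, $\cMlog=\CIlog([\Mlog;S])\otimes\Vb([\Mlog;S])$; in particular $\Vb([\Mlog;S])\subset\cMlog$ there. I would also note that, because $\rholog D_{\rholog}=\varpi D_{\varpi}$, the product operator in Proposition~\ref{proposition:Cplus} is literally the one appearing in the display above, so there is no reindexing to do.

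Next I would feed the display into this identification. Localizing to the neighborhood above and distributing the $\Mlog$-weight as $\rholog^{\gamma}=\varpi^{\gamma}\vlog^{\gamma}$ over the two faces of the blow-up (the passage from an $\Mlog$-b-Sobolev space to an $[\Mlog;S]$-b-Sobolev space near the corner costs only fixed shifts of the order and of the two weights, all independent of $l$), the display says that $\varpi^{-l}(\prod)w$ has iterated regularity of every order under $\Vb([\Mlog;S])$ with values in one fixed weighted b-Sobolev space on $[\Mlog;S]$. By the iterated-b-regularity characterization of $\Hb^{\infty,*,*}$ — the manifold-with-corners analogue of the criterion recalled in Section~\ref{subsec:b-geometry}, valid since adjusting the differential order in a weighted b-Sobolev space commutes with b-vector fields modulo the same order — this is equivalent to $\varpi^{-l}(\prod)w\in\Hb^{\infty,*,*}([\Mlog;S])$ with $*,*$ the (fixed) images of $\gamma$. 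Multiplying back by $\varpi^{l}$ gives the stated containment; the asserted interpretation as a joint polyhomogeneous expansion at $C_+$, uniform up to the corner and with $C_+$-exponents read off from $\resset(\varsigma_{0})$, then follows from the conormal double-Mellin criterion, i.e.\ Proposition~\ref{proposition:doublemellin} together with its conormal variant Proposition~\ref{proposition:asymptotic.conormal}.

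The one genuinely delicate point — and the reason the earlier steps of this section were arranged the way they were — is to be sure that the growth orders $*,*$ in the conclusion are independent of $l$, in particular that the $\scri^+$-growth does not worsen as $l\to\infty$. This is exactly what the bookkeeping with Lemma~\ref{lemma:moduletosobolev} accomplished: the $l$ powers of $\vlog$ were there traded for $l$ extra units of b-regularity, and $\rholog^{-l}$ was then commuted through the (at most $N$) module factors at the cost of strictly lower-order terms, so that \emph{all} of the $l$-dependence is carried by the explicit prefactor $\varpi^{l}=(\rholog/\vlog)^{l}$ and the residual weighted b-Sobolev space on the right of the display retains only the fixed weight $\gamma$. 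With that arrangement in hand nothing $l$-dependent leaks into the $*,*$ slots, and — using in addition the elementary fact that iterated $\Vb$-regularity of all orders into a fixed weighted b-$L^2$ space forces membership in $\Hb^{\infty,\cdot}$ — the argument is complete.
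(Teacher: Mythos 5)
Your proposal is correct and follows essentially the same route as the paper: the proposition is exactly the final display of the preceding chain (module regularity of all orders applied to $\varpi^{-l}\prod(\rholog D_{\rholog}-\sigma_j)w$ in a fixed weighted space), reinterpreted via the coordinate change to $(\varpi,\vlog,\ylog)$ in which $\rholog D_{\rholog}=\varpi D_{\varpi}$ and the lifted module generators span $\Vb([\Mlog;S])$ near $C_+$. Your explicit verification of the coordinate change and of the $l$-independence of the weights (via Lemma~\ref{lemma:moduletosobolev} and the commutation of $\rholog^{-l}$ through the module) matches what the paper does, merely spelling out details the paper leaves implicit.
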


By Proposition~\ref{proposition:doublemellin}, this is now one of the
two ingredients required to prove Theorem~\ref{mainthm} \emph{in the
  short-range case}, giving us the
expansion at $C_+$ uniformly up to $\scri^+.$ To complete the proof of
Theorem~\ref{mainthm} for the short-range case, it thus suffices to obtain the expansion at
$\scri^+,$ with uniform control at $C_+.$

\subsection{Expansion at $\scri^+$: the short-range case}
\label{sec:short-range-case}

In describing asymptotics at $\scri^+,$ \emph{we now specialize to the short-range case} for the sake of clarity of
exposition, before returning to the more general long-range case in the following
section.

Throughout this section we use $R$ to denote the vector field that
lifts to be the radial vector field at $\scri$, i.e.,
\begin{equation*}
  R = \rho D_{\rho} + vD_{v}.
\end{equation*}
We further set $\rad_{k}$ to be the appropriate product of shifted
radial vector fields to test for the $\mathcal{C}^\infty$ index set $0
\equiv \{(-j \imath, 0),\ j \in \NN\}.$  In other
words, we have
\begin{equation*}
  \rad_{k} = \prod_{j=0}^{k}(R + \imath j).
\end{equation*}
Here $\rad_{-1}$ denotes the empty product, i.e., the identity operator.

We begin by treating the short range case, i.e., assuming that $\mass
= 0$; what we will want to prove is that one has a polyhomogeneous
expansion on $[M;S]$.  This means that at the lift of $C_{+}$, which
is still denoted by $C_{+}$, one has an expansion given by the
resonances (this is Proposition~\ref{proposition:Cplus}
above, while at
$\scri$ one has smooth behavior (i.e., the standard expansion), and at
the lift of $C_{0}$ there is rapid decay.  These expansions at
boundary hypersurfaces are supposed to fit together smoothly at the
corners; we recall that by Proposition~\ref{proposition:doublemellin},
the apparent challenge of
verifying matching conditions is moot.

We must thus prove such an expansion at $C_{+}$ (with the resonance
index set), $C_{0}$ (with the empty index set), and $\scri$ (with the
smooth index set).  The $C_{+}$ expansion we have already obtained in
both the short- and long-range cases: this is Proposition~\ref{proposition:Cplus}
above.

At $C_{0}$, the same argument applies, but, due to the support
property of the resonant states (Lemma~\ref{lemma:laurent}), the
coefficients all vanish to infinite order.  In particular, this means
we need not apply the radial factors to obtain the vanishing.  In other
words, for all $l$
\begin{equation}
  \label{eq:c_0-module-reg}
  \cM^{N}w \in \varpi^{l}\Hb^{s,\gamma}(U),
\end{equation}
where $U$ is a neighborhood of $C_{0}$ in $[M;S]$ on which $\varpi$ is
bounded above\footnote{Although $U$ is a subset of the blow-up, we
  abuse notation by treating it as a subset of $\cM$ in defining this
  weighted Sobolev space with a single weight.}
 (say, $\varpi < 1$). Put
differently,
\begin{equation*}
  w \in \varpi^{l}H^{\infty, *, *}(U).
\end{equation*}

We now turn to $\scri$.  In dealing with the expansion near $\scri$ we
consider the $\CI(M)$-submodule $\cMdiff$ of $\cM$ consisting of the first
order differential operators in $\cM$.  Because $\cM$ is generated as
a module over $\Psib^{0}$ by differential operators, regularity with
respect to the module $\cMdiff$ is equivalent to regularity with
respect to $\cM$.  

\begin{lemma}
  \label{lemma:rad-vf-ideal}
  With $\cI$ the ideal of $\CI$ functions vanishing at $S$, one has
  \begin{equation*}
    \left( R + \imath k\right)\cI \subset \cI ( R + \imath(k-1)) + \cI^{2}
  \end{equation*}
  and 
  \begin{equation*}
    [R , \cMdiff] \subset \rho \cMdiff + v
    \cMdiff = \cI \cMdiff.
  \end{equation*}
\end{lemma}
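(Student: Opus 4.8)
The plan is to verify both containments directly on the generators, exploiting the explicit coordinate description $R = \rho D_\rho + v D_v$ and the fact that $\cMdiff$ is generated over $\CI(M)$ by $\rho\pd[\rho]$, $\rho\pd[v]$, $v\pd[v]$, $\pd[y]$, and $I$. For the first containment, since $\cI$ is generated over $\CI(M)$ by $\rho$ and $v$, it suffices to treat $f=\rho$ and $f=v$ (a general element is a $\CI(M)$-combination, and the asserted inclusion is $\CI(M)$-linear modulo the Leibniz error, which lands in $\cI^2$ because $R$ differentiates the coefficient into $\CI(M)$ and multiplies by $\rho$ or $v$). Computing as operators, $(R+\imath k)\rho = \rho D_\rho \rho + \imath k \rho = \rho(\rho D_\rho) + \rho\cdot\imath^{-1} + \imath k\rho = \rho(R - vD_v) + \imath(k-1)\rho$ — wait, more carefully: $(R+\imath k)\circ\rho = \rho\circ R + [\rho D_\rho,\rho] + \imath k\rho = \rho R + \imath^{-1}\rho + \imath k \rho = \rho(R + \imath(k-1))$, using $[\rho D_\rho,\rho] = \imath^{-1}\rho$ and noting $-\imath^{-1} = \imath$. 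The $vD_v$ part of $R$ commutes with $\rho$, so this is exactly $\rho(R+\imath(k-1)) \in \cI(R+\imath(k-1))$. Similarly $(R+\imath k)\circ v = v(R+\imath(k-1))$ by the same computation with $v$ in place of $\rho$. Hence for $f=\rho$ or $f=v$ we even land in $\cI(R+\imath(k-1))$ with no $\cI^2$ error; the $\cI^2$ slack is only needed to absorb the Leibniz terms $[(R+\imath k), a]\cdot(\rho\text{ or }v)$ with $a\in\CI(M)$, and $[R,a] = (\rho\pd[\rho]a + v\pd[v]a)\cdot\imath^{-1} \in \CI(M)$, so $[(R+\imath k),a](\rho\text{ or }v) \in \cI \subset \cI(R+\imath(k-1)) + \cI^2$ trivially (it is a zeroth order operator, i.e. multiplication by a $\CI$ function vanishing at $S$), which closes the argument.

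For the second containment, I compute $[R,V]$ for each generator $V$ of $\cMdiff$. The commutators of $R=\rho D_\rho + vD_v$ with $\rho\pd[\rho]$, $\pd[y]$, and $I$ all vanish. The interesting ones are $[R,\rho\pd[v]]$ and $[R,v\pd[v]]$: since $[\rho D_\rho,\pd[v]] = 0$ and $[vD_v,\pd[v]] = -\imath^{-1}\pd[v]$, one gets $[R,\rho\pd[v]] = \rho[R,\pd[v]] = \rho\cdot(-\imath^{-1}\pd[v]) \in \rho\cMdiff$, and similarly $[R,v\pd[v]] = [vD_v, vD_v] + [\rho D_\rho, vD_v]$; the first term is zero and the second is zero since $\rho\pd[\rho]$ and $v\pd[v]$ commute, so $[R,v\pd[v]]=0$. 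More carefully with the $\imath$ factors: $[R, vD_v] = 0$ and $[R,\rho D_v] = [vD_v,\rho D_v] = -\imath^{-1}\rho D_v \in \rho\cMdiff$. A general element of $\cMdiff$ is $\sum a_i V_i$ with $a_i\in\CI(M)$, so $[R,\sum a_i V_i] = \sum (Ra_i) V_i + \sum a_i[R,V_i]$; the first sum lies in $\cMdiff$ but I must check it lies in $\cI\cMdiff$ — it does \emph{not} in general, so the correct reading is that the bracket lands in $\rho\cMdiff + v\cMdiff = \cI\cMdiff$ \emph{only modulo} the observation that $Ra_i = \imath^{-1}(\rho\pd[\rho]a_i + v\pd[v]a_i)$ indeed vanishes... no: $Ra_i$ need not vanish at $S$. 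I will need instead to note that the statement is about the module generated, and $R a_i \in \CI(M)$ while the generators $V_i$ are fixed — so actually the clean statement is that $[R,\cMdiff]$, as a \emph{subset} of $\Diffb^1$, is contained in $\cMdiff$; the refinement to $\cI\cMdiff$ should be read as: for the \emph{generators} $V_i$, $[R,V_i] \in \cI\cMdiff$, and this is what is used downstream. I will phrase the proof as checking the generator-level statement and remark that the module-level conclusion $[R,\cMdiff]\subset\cI\cMdiff + \cMdiff$ is immediate, with the $\cI\cMdiff$ part being the substantive content.

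The main obstacle — or rather the main bookkeeping subtlety — is keeping track of the $\imath^{-1}$ factors inside $D_\rho = \imath^{-1}\pd[\rho]$ and making sure the commutator identities $[\rho D_\rho, \rho] = \imath^{-1}\rho$, $[vD_v,\pd[v]] = -\imath^{-1}\pd[v]$ are applied with correct signs; once these are pinned down both containments reduce to one-line verifications on the finite generating sets. I would present the proof by: (1) stating the generating sets for $\cI$ and $\cMdiff$; (2) the two-line computation $(R+\imath k)\rho = \rho(R+\imath(k-1))$ and the same for $v$, giving the first containment; (3) the table of commutators $[R,\rho\pd[\rho]] = [R,\pd[y]] = [R,I] = 0$, $[R,\rho\pd[v]] \in \rho\cMdiff$, $[R,v\pd[v]] = 0$, giving $[R,\cMdiff]\subset\rho\cMdiff + v\cMdiff = \cI\cMdiff$ on generators and hence in general. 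I expect the whole proof to be under half a page.
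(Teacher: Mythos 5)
Your overall strategy---checking both inclusions on generators via explicit commutators---is the same as the paper's, but the execution has a genuine gap, and it is the same missing observation in both halves: for $a \in \CI(M)$ one has $[R,a] = \rho D_{\rho}a + v D_{v}a \in \cI$, \emph{not merely} $\in \CI(M)$, since each summand carries an explicit factor of $\rho$ or $v$. In the first inclusion you reduce to absorbing the Leibniz term $[R,a]\cdot(\rho\text{ or }v)$, note only that $[R,a]\in\CI(M)$ so the product is in $\cI$, and then invoke ``$\cI \subset \cI(R+\imath(k-1))+\cI^{2}$ trivially.'' That containment is false: applying both sides to the constant function $1$ shows it would force $\cI\subset\cI^{2}$ when $k=1$ (and forces $g\equiv 0$ for $k\neq 1$), which fails already for $f=\rho$. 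The term you need to absorb does land in the right place, but only because $[R,a]\in\cI$, whence $[R,a]\cdot\rho\in\cI^{2}$ directly; that sharper membership is exactly the first line of the paper's proof and is what makes the $\cI^{2}$ slack suffice.

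The same oversight derails your treatment of the second inclusion. You assert that $Ra_{i}$ ``need not vanish at $S$'' and consequently retreat to a weakened, generator-level reading of the lemma; in fact $Ra_{i}\in\cI$ always, so the module-level statement $[R,\cMdiff]\subset\cI\cMdiff$ holds exactly as written once the generators are handled. On the generators themselves you have a computational error: $[R,\rho D_{v}] = [\rho D_{\rho},\rho D_{v}] + [vD_{v},\rho D_{v}] = \imath^{-1}\rho D_{v} - \imath^{-1}\rho D_{v} = 0$ (you dropped the first bracket). This matters beyond bookkeeping: your claimed value $-\imath^{-1}\rho D_{v}$ does \emph{not} lie in $\rho\cMdiff$ (that would require $D_{v}\in\cMdiff$, and indeed no element of $\cI\cMdiff$ has a $D_{v}$-coefficient outside $\cI^{2}$), so had the commutator been nonzero your argument---and the lemma---would fail at this generator. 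The paper sidesteps all of these computations by observing that $\rho D_{\rho}$, $vD_{v}$, $\rho D_{v}$, $D_{y}$ are each homogeneous of degree zero under the dilation $(\rho,v,y)\mapsto(t\rho,tv,y)$ generated by $R$, hence commute with $R$ exactly. With the two corrections above ($[R,a]\in\cI$ and $[R,\rho D_{v}]=0$) your proof closes and coincides with the paper's.
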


The second part of the lemma is related to the statement that $\cM$
lifts to b-pseudodifferential operators on $[M;S]$, while $\rho
D_{\rho} + vD_{v}$ is the radial vector field associated with the
front face, which has a commutative normal operator.  Thus, its
commutator with anything has an extra order of vanishing (i.e., in
$\rho$ or $v$) at the front face.

\begin{proof}
  First if $a \in \CI(M)$, then
  \begin{equation}
    \label{eq:radial-vf-comm}
    [R, a] = \rho D_{\rho}a + v D_{v}a \in \cI.
  \end{equation}
  Since elements of $\cI$ are of the form $\rho a_{1} + v a_{2}$ with
  $a_{j}\in \CI(M)$, and since
  \begin{equation*}
    (R + \imath)\rho = \rho R, \ (R + \imath)v = vR,
  \end{equation*}
  we have
  \begin{align*}
    (R + \imath k) (\rho a_{1} + va_{2}) 
    &= \rho (R + \imath (k-1))a_{1} + v
    (R + \imath (k-1))a_{2} \\
    &= \rho [R, a_{1}] + \rho a_{1} (R + \imath (k-1)) 
    + v [R , a_{2}] + va_{2} (R+ \imath (k-1)),
  \end{align*}
  with the commutators on the right hand side in $\cI$ as remarked at
  the outset, so the membership of the right hand side in $\cI (R +
  \imath (k-1)) + \cI^{2}$ follows.

  Turning to $[R, \cMdiff]$, using~\eqref{eq:radial-vf-comm} again, it
  suffices to show for a set of generators $V_{j}$ of $\cMdiff$ that
  $[R, V_{j}] \in \cI\cMdiff$.  Using $\rho D_{\rho}$, $vD_{v}$, $\rho
  D_{v}$, and $D_{y_{j}}$ in local coordinates as generators, all
  commute with $R$ since they are homogeneous of
  degree zero under the action of dilations $(\rho, v, y) \to (t\rho,
  tv, y)$, $t>0$, in the first two variables.
\end{proof}

In fact, more generally one has
\begin{lemma}
  \label{lemma:rad-factors-through-module}
  With $\cI$ as above:
  \begin{align*}
    [\rad_{k}, \cMdiff^{l}] &\subset
    \sum_{j=0}^{k}\cI^{j+1}\cMdiff^{l}\rad_{k-1-j} \\
    &\subset \sum_{j=0}^{k+1}\Psib^{-j-1}\cMdiff^{l+j+1}\rad_{k-1-j}
  \end{align*}
\end{lemma}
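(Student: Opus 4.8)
The plan is to establish the first inclusion by induction on $k$, with $l\in\NN$ arbitrary throughout, and then to deduce the second inclusion from Lemma~\ref{lemma:inclusion} by a short commutator manipulation. Two elementary facts are used freely: the module $\cMdiff$ preserves the ideal $\cI$ (so $\cMdiff\cI\subset\cI\cMdiff$, with the order possibly dropping, which is harmless since $I\in\cMdiff$), and the first part of Lemma~\ref{lemma:rad-vf-ideal} read as a rule for pushing one shifted radial factor past one factor of $\cI$: $(R+\imath m)\cI\subset\cI(R+\imath(m-1))+\cI^{2}$, i.e.\ each such move either lowers the shift by one or else costs an extra power of $\cI$ while consuming the radial factor.

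For the base case $k=0$ we have $\rad_0=R$, and $[R,\cMdiff^{l}]\subset\cI\cMdiff^{l}$ follows from the Leibniz rule together with $[R,\cMdiff]\subset\cI\cMdiff$ after sweeping the resulting factor of $\cI$ to the left past the remaining $\cMdiff$ factors; this is exactly the $j=0$ term. For the inductive step, write $\rad_k=(R+\imath k)\rad_{k-1}$ and use $[PQ,A]=P[Q,A]+[P,A]Q$ to obtain, for $A\in\cMdiff^{l}$,
\[
[\rad_k,A]=(R+\imath k)[\rad_{k-1},A]+[R,A]\rad_{k-1},
\]
where $[R,A]\in\cI\cMdiff^{l}$ by the base-case argument, so the second term lies in $\cI\cMdiff^{l}\rad_{k-1}$, the $j=0$ term of the target. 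In the first term I invoke the inductive hypothesis to write $[\rad_{k-1},A]$ as a finite sum of pieces $\cI^{j+1}\cMdiff^{l}\rad_{k-2-j}$ with $0\le j\le k-1$, and then push $(R+\imath k)$ to the right through $\cI^{j+1}$ and then through $\cMdiff^{l}$. Pushing through $\cI^{j+1}$ produces a main branch $\cI^{j+1}(R+\imath(k-1-j))\cMdiff^{l}\rad_{k-2-j}$ together with branches carrying one extra power of $\cI$ and one fewer radial factor; commuting $(R+\imath(k-1-j))$ past $\cMdiff^{l}$ (absorbing $[R,\cMdiff]\subset\cI\cMdiff$ terms) and using the crucial recombination
\[
(R+\imath(k-1-j))\,\rad_{k-2-j}=\prod_{i=0}^{k-1-j}(R+\imath i)=\rad_{k-1-j}
\]
turns the main branch into exactly the $j$-th target term $\cI^{j+1}\cMdiff^{l}\rad_{k-1-j}$. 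Reindexing the extra-$\cI$ branch from the $j$-th input (which reads $\cI^{j+2}\cMdiff^{l}\rad_{k-2-j}=\cI^{(j+1)+1}\cMdiff^{l}\rad_{k-1-(j+1)}$) identifies it with the $(j{+}1)$-st target term, so altogether $[\rad_k,A]\in\sum_{j=0}^{k}\cI^{j+1}\cMdiff^{l}\rad_{k-1-j}$.

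For the second inclusion, note first that $[\Diffb^{1},\Psib^{-1}]\subset\Psib^{-1}$, so for $V\in\cMdiff\subset\Diffb^{1}$ and $P\in\Psib^{-1}$ one has $VP=PV+[V,P]\in\Psib^{-1}\cMdiff$; iterating gives $(\Psib^{-1}\cMdiff)^{j+1}\subset\Psib^{-(j+1)}\cMdiff^{j+1}$. By Lemma~\ref{lemma:inclusion}, $\cI\subset\Psib^{-1}\cMdiff$, hence $\cI^{j+1}\subset\Psib^{-(j+1)}\cMdiff^{j+1}$, and therefore $\cI^{j+1}\cMdiff^{l}\rad_{k-1-j}\subset\Psib^{-(j+1)}\cMdiff^{l+j+1}\rad_{k-1-j}$; summing over $0\le j\le k$ yields the claim, the $j=k+1$ summand being merely extra room.

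The main obstacle is the index bookkeeping in the inductive step: one must track that each time the leading factor $(R+\imath k)$ is pushed past an accumulated power of $\cI$, the remaining shift drops in lockstep with the growth of the $\cI$-power, so that every branch recombines into a genuine consecutive product $\rad_{k-1-j}$ carrying precisely the weight $\cI^{j+1}$ demanded by the statement. Checking that all sub-branches—including those produced by commuting past the $\cMdiff^{l}$ factors—land in the stated finite sum rather than some larger family is the only delicate point; once the shift-versus-weight balance is arranged correctly, the rest is routine.
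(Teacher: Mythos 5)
Your proof is correct and follows essentially the same route as the paper's: the Leibniz decomposition $[\rad_k,A]=(R+\imath k)[\rad_{k-1},A]+[R,A]\rad_{k-1}$, induction on $k$ using the first part of Lemma~\ref{lemma:rad-vf-ideal} to push the shifted radial factor through accumulated powers of $\cI$ (trading a unit of shift for the recombination $(R+\imath(k-1-j))\rad_{k-2-j}=\rad_{k-1-j}$, or a radial factor for an extra power of $\cI$), and Lemma~\ref{lemma:inclusion} for the second inclusion. The bookkeeping of the side branches matches the paper's, so nothing further is needed.
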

Here the vanishing factor of powers of $\cI$ arises from the
classicality of the coefficients, so if one has logarithmic
coefficients, one needs additional factors of the radial vector field
plus appropriate constants.

\begin{proof}
  The second inclusion in the statement of the lemma follows from $\cI
  \subset \Psib^{-1} \cMdiff$, which we prove in
  Lemma~\ref{lemma:inclusion}.

  First consider $k=0$, i.e., $[R, V_{1}\dots V_{l}]$ with $V_{j} \in
  \cMdiff$.  This is of the form
  \begin{equation*}
    [R, V_{1}]V_{2}\dots V_{l} + V_{1}[R, V_{2}]V_{3}\dots V_{l} + \dots + V_{1}\dots
    V_{l-1}[R, V_{l}],
  \end{equation*}
  and the commutators are in $\cI \cMdiff$ by the second half of
  Lemma~\ref{lemma:rad-vf-ideal}.  Now, as $[\cMdiff, \cI]\subset
  \cI$, one can commute the $\cI$ factors to the front iteratively.
  This proves the $k=0$ case, namely that $[R, V_{1}\dots
  V_{l}]\subset \cI \cMdiff^{l}$.

  Now suppose $k\geq 1$, and that the lemma has been proved with $k$
  replaced by $k-1$.  Then
  \begin{align}
    \label{eq:higher-k-comm}
    [\rad_{k}, \cMdiff^{l}] 
    \subset (R + \imath k) [\rad_{k-1},\cMdiff^{l}] + [R,
    \cMdiff^{l}]\rad_{k-1}
  \end{align}
  By the inductive hypothesis the first term on the right hand side is
  in
  \begin{align*}
    (R + \imath k)\sum_{j=0}^{k-1}\cI^{j+1}\cMdiff^{l}\rad_{k-2-j}
  \end{align*}
  Commuting $R + \imath k$ through the ideal
  factors using the first half of Lemma~\ref{lemma:rad-vf-ideal}
  iteratively, this itself lies in
  \begin{align*}
    \sum_{j=0}^{k-1}\left( \cI^{j+1}(R + \imath
      (k-j-1))\cMdiff^{l}\rad_{k-2-j} +
      \cI^{j+2}\cMdiff^{l}\rad_{k-2-j}\right)
  \end{align*}
  By the $k=0$ case, commuting $(R + \imath j)$
  factors on the left of $\cMdiff^{l}$ to the right gives commutators
  in $\cI \cMdiff^{l}$, so this expression is in
  \begin{align*}
    \sum_{j=0}^{k-1}\left( \cI^{j+1}\cMdiff^{l}\rad_{k-1-j} +
      \cI^{j+2}\cMdiff^{l}\rad_{k-2-j}\right) 
    &\subset \sum_{j=1}^{k}\cI^{j+1}\cMdiff^{l}\rad_{k-1-j}.
  \end{align*}
  which is of the form in the statement of the lemma.  On the other
  hand, the second term in~\eqref{eq:higher-k-comm} is
  \begin{equation*}
    [R, \cMdiff^{l}]\rad_{k-1},
  \end{equation*}
  so by the $k=0$ case we get
  \begin{equation*}
    \cI\cMdiff^{l} \rad_{k-1}
  \end{equation*}
  for this term, which is of the form given in the last term in the
  statement of the lemma.
\end{proof}

The main claim is:

\begin{proposition}
  \label{prop:ff-gain-in-diff}
  If $w \in \Hb^{s,\gamma}(M)$ with $Lw \in \dCI(M)$, we have
  \begin{equation*}
    \cMdiff^{N}\rad_{k}w \in \Hb^{s+(k+1), \gamma}(M).
  \end{equation*}
\end{proposition}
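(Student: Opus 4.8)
\emph{Reduction and the key identity.} Since we are in the short-range case $m=0$, Lemma~\ref{lemma:LmodM} says that $L=-4D_vR+Q$ with $Q\in\cM^2$, where $D_v=-\imath\pd[v]$ and $R=\rho D_\rho+vD_v$; equivalently $D_vR=\tfrac14(Q-L)$. I will also use that $w$ enjoys full module regularity relative to $\Hb^{s,\gamma}(M)$, i.e.\ $\cMdiff^Nw\in\Hb^{s,\gamma}(M)$ for every $N$, which follows from Proposition~\ref{prop:radial-b-estimate} together with propagation of b-regularity away from $\cR$ (Proposition~\ref{proposition:bpropagation}). Finally, from the elementary commutation $[R,D_v]=\imath D_v$ one gets $R(D_vR)=(D_vR)(R+\imath)$, and, by induction, the telescoping identity
\[
  D_v^{\,k+1}\rad_k=(D_vR)^{k+1},\qquad\text{hence}\qquad D_v^{\,k+1}\rad_kw=\bigl(\tfrac14(Q-L)\bigr)^{k+1}w.
\]

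\emph{Step 1: the top derivative.} Starting from this identity, I would show that $\cMdiff^ND_v^{\,k+1}\rad_kw\in\Hb^{s,\gamma}(M)$ for all $N$. Expand $(Q-L)^{k+1}$ into a sum of words in $Q$ and $L$. The word $Q^{k+1}$ contributes $\cMdiff^NQ^{k+1}w$, which lies in $\Hb^{s,\gamma}$ because $\cM$ is closed under commutators and $w$ has full module regularity. A word containing a factor of $L$ is treated by peeling off the rightmost such factor: when $L$ stands immediately over $w$ one uses $Lw\in\dCI$, and otherwise one re-substitutes $L=-4D_vR+\cM^2$ and commutes inward. The commutation identities $[R,\cMdiff]\subseteq\cI\cMdiff$ and $(R+\imath j)\cI\subseteq\cI(R+\imath(j-1))+\cI^2$ of Lemma~\ref{lemma:rad-vf-ideal} each produce a factor of the ideal $\cI$, and by Lemma~\ref{lemma:inclusion} every factor of $\cI$ is a factor of $\Psib^{-1}\cMdiff$, hence buys back one derivative; the point is that this produces enough regained derivatives to absorb the a priori order loss from $L\in\Psib^{2}$ and from the stray $D_v$'s spawned by commutators such as $[D_v,vD_v]=\imath^{-1}D_v$, so that every word (and its $\cMdiff^N$-derivatives) lands in $\Hb^{s,\gamma}$.

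\emph{Step 2: descending induction.} Given Step 1, I would prove by downward induction on $j$ from $k+1$ to $0$ that $\cMdiff^ND_v^{\,j}\rad_kw\in\Hb^{s+k+1-j,\gamma}(M)$ for all $N$; the case $j=0$ is the proposition. For the inductive step, $\Vb(M)$ is spanned over $\CI(M)$ by $\rho D_\rho,\ D_v,\ D_y$, two of which ($\rho D_\rho$ and $D_y$) lie in $\cMdiff$; so by the elliptic b-parametrix it suffices to check that each of these three b-vector fields, applied to $\cMdiff^ND_v^{\,j}\rad_kw$, lands in $\Hb^{s+k-j,\gamma}$. For $D_v$ this is the inductive hypothesis at $j+1$; for $\rho D_\rho$ and $D_y$ one commutes them inward past $D_v^{\,j}\rad_k$ (the commutators being of lower order and controlled, via Lemmas~\ref{lemma:rad-vf-ideal} and \ref{lemma:rad-factors-through-module}, by the inductive hypothesis and the module regularity being carried along), reducing again to the inductive hypothesis. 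Running this down to $j=0$ gives $\cMdiff^N\rad_kw\in\Hb^{s+k+1,\gamma}(M)$.

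\emph{Main obstacle.} The substance of the argument — and where I expect essentially all the work to be — is the commutator bookkeeping of Step 1 (and its reappearance in Step 2): one must verify that rewriting $L$ via Lemma~\ref{lemma:LmodM} and commuting it through the $Q$- and $R$-factors of $(Q-L)^{k+1}$ always produces exactly enough factors of $\cI$ (equivalently, by Lemma~\ref{lemma:inclusion}, enough regained derivatives) to compensate the order loss, with the count matching the index shifts $\imath j$ built into $\rad_k$. This is precisely what Lemma~\ref{lemma:rad-factors-through-module} is designed to encode, and the remaining work is to organize its iterated application. It is also exactly here that the short-range hypothesis enters: logarithmic coefficients would destroy the $\cI$-vanishing in Lemma~\ref{lemma:rad-vf-ideal}, which is why the long-range case instead needs the higher-multiplicity products $(R+\imath j)^{2j+1}\cdots(R+\imath)^3(R)$ mentioned in the introduction.
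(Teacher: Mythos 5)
Your strategy is sound and uses the same raw ingredients as the paper (the factorization $L=-4D_vR+\cM^2$ of Lemma~\ref{lemma:LmodM}, the commutator Lemmas~\ref{lemma:rad-vf-ideal}, \ref{lemma:rad-factors-through-module} and \ref{lemma:inclusion}, and ellipticity of $D_v$ on $\WFb(w)$), but it organizes the induction differently. The paper runs a single upward induction on $k$, peeling off one $D_v$ per step via $D_v\rad_k=\rad_{k-1}D_vR=\tfrac14\rad_{k-1}(Q-L)$: the term $\rad_{k-1}(Lw)$ is residual, and Lemma~\ref{lemma:rad-factors-through-module} turns $\rad_{k-1}\cMdiff^2$ into $\sum_j\Psi_\bl^{-j}\cMdiff^{j+2}\rad_{k-1-j}$, where the order count closes precisely because the inductive hypothesis already places $\cMdiff^{j+2}\rad_{k-1-j}w$ in the \emph{higher} space $\Hb^{s+k-j,\gamma}$. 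You instead prove everything first at the base regularity $\Hb^{s,\gamma}$ with all $k+1$ factors of $D_v$ present, and only afterwards recover derivatives by a downward induction; both routes work, but be aware of two points. First, in Step 1 the accounting you describe --- that each factor of $\cI\subset\Psi_\bl^{-1}\cMdiff$ ``buys back one derivative'' against $L\in\Psi_\bl^2$ --- does not by itself close the estimate for a word such as $QLQw$: applying $L$ to the merely module-regular function $Qw$ loses a derivative through the $D_vR$ part, and the $\cI$ produced by $[R,Q]$ sits in the \emph{other} summand. What actually rescues these words is commuting the $D_v R$ all the way down to $w$ (using $D_v\cMdiff^l\subset\sum_a\cMdiff^aD_v+\cMdiff^l$ and then $D_vRw=\tfrac14(Q-L)w$ again), together with the absorption $\cI D_v\subset\cMdiff$ for the stray $D_v$'s generated by $[R,\cMdiff^l]\subset\cI\cMdiff^l$ --- this works because $\rho D_v$ and $vD_v$ are module generators, not because of a $\Psi_\bl^{-1}$ order gain. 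Second, your Step 2 bootstrap needs a weak a priori bound to start the elliptic iteration (e.g.\ $\cMdiff^ND_v^{\,j}\rad_kw\in\Hb^{s-j,\gamma}$, which follows since $\rad_k\in\cMdiff^{k+1}$); with that, the microlocal elliptic estimate for $D_v$ on $\WFb(w)$ upgrades one order per pass exactly as in the paper. The trade-off is that the paper's version hides the word-by-word combinatorics inside the inductive hypothesis and is shorter, while yours isolates a clean operator identity $D_v^{\,k+1}\rad_k=(D_vR)^{k+1}$ at the cost of a more delicate commutator analysis at fixed regularity.
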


Notice that this proposition improves the b-regularity, but not the
decay; in particular, this does \emph{not} involve normal operators.
However, once we have this, we can use the infinite order vanishing at
$C_{0}$ to establish vanishing at the front face, as we show below.

\begin{proof}
  The result follows from Proposition~\ref{prop:radial-b-estimate} if there are no radial vector
  factors (so $k=-1$).  If $k=0$, notice that
  \begin{equation*}
    L + 4D_{v} (v D_{v} + \rho D_{\rho}) \in \cMdiff^{2},
  \end{equation*}
  so $Lw \in \dCI$ and $\cMdiff^{N}w \in \Hb^{s,\gamma}$ for all $N$
  implies that $D_{v}R w \in \Hb^{s,\gamma}$ by~\eqref{eq:Lmodule}.
  Because $D_{v}$ is elliptic on $\WFb(w)$, this yields $\rad_{0} w
  \in \Hb^{s+1, \gamma}(M)$.  To finish the $k=0$ case, we now rewrite
  $D_{v}\cMdiff^{N}\rad_{0}w$ by commuting $D_{v}$ with $\cMdiff$.  In
  particular, it suffices to consider the usual set of generators for
  $\cMdiff$; the only one not commuting with $D_{v}$ is $vD_{v}$, but
  $D_{v}(v D_{v}) = (D_{v} v)D_{v} \in \cMdiff D_{v}$, so $D_{v}
  \cMdiff \subset \cMdiff D_{v} + \cMdiff$.  Thus, iterating we find that
  \begin{equation*}
    D_{v}\cMdiff^{N} \subset \cMdiff^{N}D_{v} + \cMdiff^{N}.
  \end{equation*}
  Consequently, we obtain
  \begin{align*}
    D_{v} \cMdiff^{N}R w &\subset
    \cMdiff^{N}D_{v}Rw + \cMdiff^{N}Rw \\
    &\subset \cMdiff^{N} Lw  + \cMdiff^{N+2}w \subset \Hb^{s, \gamma}(M).
  \end{align*}
  The ellipticity of $D_{v}$ on $\WFb (u)$ now proves the $k=0$ case
  of the proposition.

  Now suppose $k \geq 1$, and that the proposition has been proved
  with $k$ replaced by $k-1$.  We use then that
  \begin{align*}
    D_{v} \rad_{k} = \rad_{k-1}D_{v} R,
  \end{align*}
  so
  \begin{align*}
    D_{v} \rad_{k} &\in \rad_{k-1}L + \rad_{k-1}\cMdiff^{2} \subset
    \rad_{k-1}L + \sum_{j=0}^{k}\Psib^{-j}\cMdiff^{j+2}\rad_{k-1-j},
  \end{align*}
  where we applied Lemma~\ref{lemma:rad-factors-through-module} for
  the last inclusion.  Thus, using the inductive hypothesis,
  \begin{equation*}
    D_{v}\rad_{k} w \in \Hb^{s+k,\gamma}.
  \end{equation*}
  Again, as $D_{v}$ is elliptic in the microlocally relevant region,
  \begin{equation*}
    \rad_{k}w \in \Hb^{s+(k+1),\gamma}.
  \end{equation*}
  A similar result holds even with a factor $\cMdiff^{N}$ added, by
  the same argument as in the $k=0$ case, which completes the proof of
  the proposition.
\end{proof}

We now use the proposition, which as pointed out gives additional
regularity without additional decay, to prove vanishing at the front
face using the infinite order vanishing at $C_{0}$.  First, fixing
$v_{0} < 0$, we already have $O(\rho^{\infty})$ bounds for $w$ near
$v_{0}$.  Further, we have the following estimate near $C_{0}$:
\begin{lemma}
  \label{lemma:ff-improvement}
  Let $U$ be a neighborhood of $\overline{C_{0}}$ in $[M;S]$ as above.  Then for
  any $\epsilon > 0$ and $N,N' \in \NN$,
  \begin{equation}
    \label{eq:diff-ff-reg-with-weight}
    D_{v}^{k+1}\cMdiff^{N}\rad_{k}w \in (\rho / v)^{N'}v^{-\epsilon}\Hb^{s,\gamma}(U).
  \end{equation}
\end{lemma}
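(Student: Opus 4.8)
The plan is to argue by induction on $k$, establishing the statement for all $N$, $N'$ and all $\epsilon>0$ simultaneously. The base case $k=-1$ is \eqref{eq:c_0-module-reg} itself, which already gives $\cMdiff^{N}w\in(\rho/v)^{N'}\Hb^{s,\gamma}(U)$ for all $N,N'$; and $k=0$ follows from it together with a single use of the identity $L+4D_{v}(vD_{v}+\rho D_{\rho})\in\cMdiff^{2}$ exactly as in the $k=0$ step of Proposition~\ref{prop:ff-gain-in-diff}.

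For the inductive step I would first localize. On $U$ the inclusion is trivial over the interior of $C_{0}$, where \eqref{eq:c_0-module-reg} already shows $w$ is smooth and vanishing to infinite order (and there $D_{v}$ is a genuine $b$-vector field), and over a neighborhood of $\scri^{-}$, where $w$ is Schwartz because $u$ is a forward solution; so it suffices to work near the corner $C_{0}\cap\scri^{+}$. There I pass to the coordinates $(\varpi,v,y)$ in which $\varpi=\rho/v$ is a defining function of $C_{0}$ and $|v|$ one of $\scri^{+}$. In these coordinates $R=\rho D_{\rho}+vD_{v}$ becomes $vD_{v}$, so each $\rad_{j}$ lifts to a $b$-differential operator on $[M;S]$, while $D_{v}\in v^{-1}\Vb([M;S])$, so that each factor of $D_{v}$ costs one $b$-derivative and one order of decay at $\scri^{+}$. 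The one structural input beyond those of Proposition~\ref{prop:ff-gain-in-diff} is that near $C_{0}\cap\scri^{+}$ the ideal $\cI$ of functions vanishing at $S_{+}$ lies in $|v|\,\CI([M;S])$, so each factor of $\cI$ supplies one order of $\scri^{+}$-decay.

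The computation then consists of feeding each factor of $D_{v}$ into a factor of $\rad$: using $D_{v}\rad_{j}=\rad_{j-1}D_{v}R$ and $D_{v}R=-\tfrac14 L+Q$ with $Q\in\cMdiff^{2}$ (Lemma~\ref{lemma:LmodM}, short-range case), one replaces each such pair by $L$, which applied to $w$ gives a Schwartz term since $Lw\in\dCI(M)$, plus a module factor. Commuting $D_{v}$ past $\cMdiff$ via $D_{v}\cMdiff\subset\cMdiff D_{v}+\cMdiff$ and the resulting $\cMdiff$-factors past the remaining $\rad$'s by Lemma~\ref{lemma:rad-factors-through-module}, one is reduced to a finite sum of terms of the schematic form
\[
\cMdiff^{\bullet}\,\cI^{j+1}\,D_{v}^{p}\,\cMdiff^{2}\,\rad_{k-2-j}w,\qquad 0\le j\le k-1,\ p\le k,
\]
plus genuinely simpler terms controlled directly by \eqref{eq:c_0-module-reg} and Proposition~\ref{prop:ff-gain-in-diff}. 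The decisive arithmetic is that the number of factors of $D_{v}$ exceeding the critical count $(k-2-j)+1$ that $\rad_{k-2-j}$ can absorb equals $j+1$, which is exactly the number of factors of $\cI$ present; absorbing those $D_{v}$'s into $\cI^{j+1}$ (using $vD_{v}\in\Vb([M;S])$ near $\scri^{+}$) costs nothing at $\scri^{+}$, and the $b$-operators so produced cost only regularity, which is supplied by the $(k+1)$-derivative gain of Proposition~\ref{prop:ff-gain-in-diff} on the remaining $\rad$-factors; what is left is $D_{v}^{(k-2-j)+1}\cMdiff^{2}\rad_{k-2-j}w$, to which the inductive hypothesis at level $k-2-j<k$ applies. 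Reassembling yields the claimed membership in $(\rho/v)^{N'}v^{-\epsilon}\Hb^{s,\gamma}(U)$; the residual $\epsilon$ is the interpolation loss between the conormal $\cM$-module regularity of $w$ near the radial set and its finite $b$-Sobolev order (equivalently, the cost of moving $v$-weights past the $\Psib^{0}$ operators that enter via $\cI\subset\Psib^{-1}\cMdiff$).

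I expect the main obstacle to be precisely this bookkeeping: one must check that at every node of the recursion the excess count of $D_{v}$-factors never exceeds the available power of $\cI$, so that no uncompensated negative power of $v$ survives, while the regularity budget furnished by Proposition~\ref{prop:ff-gain-in-diff} is never overdrawn and the infinite-order vanishing at $C_{0}$ is preserved at each step. Tracking the Schwartz error terms produced whenever the identity $D_{v}R=-\tfrac14 L+Q$ is invoked is routine by comparison.
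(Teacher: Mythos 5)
Your route---an induction on $k$ that re-runs the commutator machinery of Proposition~\ref{prop:ff-gain-in-diff} while simultaneously tracking weights at $C_{0}$ and $\scri$---is much heavier than the lemma requires, and as written it is not closed. Two concrete problems. First, the ``genuinely simpler terms'' (those where the surviving $D_{v}$-count does not exceed what the remaining $\rad$-factor can absorb) are \emph{not} controlled directly by either ingredient alone: \eqref{eq:c_0-module-reg} yields the weight $(\rho/v)^{N'}$ only at the cost of a negative power of $v$ (each $D_{v}$ must be paired with a $v$ to become a module element), while Proposition~\ref{prop:ff-gain-in-diff} yields regularity with no weight at all. To get both simultaneously you must interpolate between the two memberships---and that interpolation is the entire content of the paper's proof, so your recursion secretly contains the actual argument as an unacknowledged sub-step at every node. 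Second, the ``decisive arithmetic'' matching excess $D_{v}$-factors against powers of $\cI$ is asserted, flagged by you as the main obstacle, and not carried out; moreover, if that matching really balanced exactly with no loss you would obtain the statement with $\epsilon=0$, which contradicts your closing sentence---the $v^{-\epsilon}$ in your write-up has no clearly identified source (attributing it to moving weights past the $\Psib^{0}$ factors in $\cI\subset\Psib^{-1}\cMdiff$ is not the mechanism).

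The paper's proof is two lines and uses no commutators. Since $\rad_{k}\in\cMdiff^{k+1}$ and $v^{k+1}D_{v}^{k+1}\in\cMdiff^{k+1}$, the decay statement \eqref{eq:c_0-module-reg} gives directly
\begin{equation*}
  D_{v}^{k+1}\cMdiff^{N}\rad_{k}w \in v^{-k-1}(\rho/v)^{N''}\Hb^{s,\gamma}(U) \quad\text{for all } N'',
\end{equation*}
while Proposition~\ref{prop:ff-gain-in-diff} gives $D_{v}^{k+1}\cMdiff^{N}\rad_{k}w\in\Hb^{s,\gamma}(M)$ with no weight. Fixing $k$, taking $N''$ large, and interpolating between these two statements trades $v^{-k-1}$ for $v^{-\epsilon}$ while retaining $(\rho/v)^{N'}$, which is exactly \eqref{eq:diff-ff-reg-with-weight}. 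You should discard the recursion and argue this way; your observations that $R$ lifts to the b-normal vector field of $\scri$ and that $\cI\subset|v|\,\CI([M;S])$ near the corner are correct but are not needed here (they belong to the proof of Proposition~\ref{prop:ff-gain-in-diff} and to the integration step that follows this lemma).
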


\begin{proof}
  Without the $(\rho / v)^{N'}$ or $v^{-\epsilon}$ factors, the
  desired estimate is just the regularity statement of
  Proposition~\ref{prop:ff-gain-in-diff}.  On the other hand, since
  $\rad_{k}\in \cMdiff^{k+1}$, the decay statement~\eqref{eq:c_0-module-reg}
  yields the growth/decay statement
  \begin{equation*}
    \cMdiff^{\tilde{N}}\rad_{k}w \in (\rho / v)^{N'}\Hb^{s,\gamma}(U).
  \end{equation*}
  As $v^{k+1}D_{v}^{k+1}\in \cMdiff^{k+1}$, we then have
  \begin{equation*}
    D_{v}^{k+1}\cMdiff^{N}\rad_{k}w \in v^{-k-1}(\rho/v)^{N'}\Hb^{s,\gamma}(U).
  \end{equation*}
  Fixing $k$, taking $N'$ large, and interpolating with
  Proposition~\ref{prop:ff-gain-in-diff} completes the proof.
\end{proof}

Now integrating~\eqref{eq:diff-ff-reg-with-weight} $k+1$ times in $v$
from $v_{0}$ gives
\begin{equation*}
  \cMdiff^{N}\rad_{k}w \in v^{-\epsilon}(\rho^{N'} + \rho^{N'}v^{-N'+k+1})\Hb^{s,\gamma}(U),
\end{equation*}
which is, with $N' > k+1$, an order $k+1 - \epsilon$ vanishing
statement at the front face in the region $U$ where $|\rho / v|$ is
bounded.  

Having this decay in $U$, we can proceed further into the front face.
Since $w$ has no b-wavefront set except at $\rho=v=0,$ it is in
particular smooth in $v,$ and we can rewrite $\cM \rad_{k} w$ as an
iterated integral of its $(k+1)$-st derivative in $v$.  Integrating
from, say, $v = - \rho /2$, this gives an estimate $$\cMdiff^{N}
\rad_{k} w \in (v +
C\rho)^{k+1}\Hb^{s,\gamma}(M),$$ with $v + C \rho$ being
the length of the integration curve in the coordinates $v,y,\
\varpi\equiv \rho / v$ valid in a neighborhood of the interior of $\scri^+$.
Now we lift the module regularity statement on $M$ to the blowup:
since the generators of the module span a basis of b-vector fields on
$[M; S],$ the module regularity lifts to give $\Hb^\infty$ regularity
on the blowup (as the generators of the module lift to nondegenerate
b-vector fields on the blow-up), i.e., 
the module regularity means that $$\rad_{k}w \in (v + C\rho)^{k+1}
\Hb^{N, *, *}([M ; S])$$ for suitable fixed (i.e., $k$-independent)
weights $*$.  As $v + C\rho$ defines the front face in the relevant
region, this is exactly the desired polyhomogeneity statement at
$\scri$.  

This finishes the proof of Theorem~\ref{mainthm} in the
short-range case.

\begin{remark}
  \label{rem:price}
  In~\cite[Section 10.1]{radfielddecay}, we incorrectly stated that
  the radiation field was rapidly decaying as it is in Minkowski
  space.  Instead, we have a form of Price's law, which in this case
  states that the radiation field decays as $s^{-\frac{n-1}{2}-1}$.
  In $3+1$ dimensions, this means that the radiation field is expected
  to decay as $s^{-2}$ and the solution of the wave equation should
  decay as $t^{-3}$ in the interior of the light cones.
\end{remark}

\subsection{Expansion at $\scri^+$: the long-range case}
\label{sec:long-range-case}

In this section we return to the general setting $m\neq 0$.  The
logification introduced in Section~\ref{sec:logification} added
logarithmic terms to the operators in question (in order to remove
them from the geometry).  We proceed in much the same way as in the
previous section, though significant modifications arise from the
presence of $\rholog\log\rholog$ terms.  In particular, the main
difference is that, while in the short range case, we showed that $w$
was polyhomogeneous at $\scri$ with index set
\begin{equation*}
  \smoothset = \{ (-\imath k, 0) : k = 0, 1 , 2, \dots\},
\end{equation*}
in the long-range setting we show that, owing to the additional log
terms in the coefficients of $L$, $w$ is polyhomogeneous at $\scri$
with index set
\begin{equation*}
  \logset = \{ (-\imath k, j) : k = 0, 1, 2, \dots, j = 0, 1, \dots , 2k\}.
\end{equation*}

In what follows, we will abuse notation by letting $w$ denote $\taut_*
w,$ its pushforward from $M$ to $\Mlog.$

Let $\rad_{k}$ be given by the following product of radial vector
fields (note that this differs from the product in
Section~\ref{sec:short-range-case}):
\begin{equation*}
  \rad_{k} = \prod_{j=0}^{k}(\rholog D_{\rholog} + \vlog D_{\vlog} +
  \imath j)^{2j+1}
\end{equation*}
On $[\Mlog ; S]$ with coordinates $\rholog,\ \vlog,\ \varpi,$ observe that $\rad_{k}$ has the following form:
\begin{equation*}
  \rad_{k} = \prod _{j=0}^{k}(\rholog D_{\rholog} + \imath j)^{2j+1}
\end{equation*}
In other words, $\rad_{k}$ is the appropriate product of radial
vector fields at $\scri$ to test for polyhomogeneity with index set
$\logset$.  For convenience with our bookkeeping, we also define the
$k$-th triangular number as follows:
\begin{equation*}
  t_{-1} = 0, \quad t_{k} = t_{k-1} + k
\end{equation*}

As in the short range case, the support property of the resonant
states means that all coefficients vanish to infinite order at
$C_{0}$, so that for all $\ell$, we have
\begin{equation*}
  w \in \varpi^{\ell}\Hb^{\infty, *, *}(U),
\end{equation*}
where $U$ is a neighborhood of $C_{0}$ in $[\Mlog ; S]$ on which $\varpi$
is bounded above.

The main difference in the proof concerns the behavior at $\scri$.  In
the previous section, the crux of the proof was
Proposition~\ref{prop:ff-gain-in-diff}. The replacement for this
proposition is the following:
\begin{proposition}
  \label{prop:ff-gain-in-diff-log}
  If $w \in \Hb^{s,\gamma}(\Mlog)$ with $Lw \in \dCI(M)$, we have
  \begin{equation*}
    \cMdifflog ^{N}\rad_{k} w \in \Hb^{s+(k+1), \gamma - 0}(\Mlog).
  \end{equation*}
\end{proposition}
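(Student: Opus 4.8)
The plan is to run the same induction on $k$ as in the proof of Proposition~\ref{prop:ff-gain-in-diff}, with Lemma~\ref{lemma:LmodM} replaced by its logified refinement Lemma~\ref{lemma:Lnormop} and the commutator bookkeeping of Lemmas~\ref{lemma:rad-vf-ideal}--\ref{lemma:rad-factors-through-module} replaced by logified analogues in which the multiplicities $2j+1$ in $\rad_k$ are consumed to absorb the $\log\rholog$ coefficients produced by the change of smooth structure. Write $R = \rholog D_{\rholog} + \vlog D_{\vlog}$ for the radial vector field at $\scri$ (on $[\Mlog;S]$ this is just $\rholog D_{\rholog}$ in the coordinates $\rholog,\varpi,\ylog$), so $\rad_k = \prod_{j=0}^k (R+\imath j)^{2j+1}$. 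As in the short-range case, by Propositions~\ref{proposition:bpropagation} and~\ref{prop:radial-b-estimate} the b-wavefront set of $w$ is contained in $\cR$, where $D_{\vlog}$ (b-symbol $\gamma$) is elliptic; since every operator applied to $w$ below lies in $\Diffblog$ and is hence microlocal, $D_{\vlog}$ is elliptic on all the relevant microsupports. By the module-regularity consequence of Proposition~\ref{prop:radial-b-estimate} recorded in Proposition~\ref{prop:bad-module} and the discussion preceding it, $\cMdifflog^{N_1}\badmod^{N_2}\taut_* w \in \Hb^{s,\gamma-0}(\Mlog)$ for all $N_1,N_2$; since $\rad_k$ is a product of finitely many elements of $\cMdifflog$ this gives $\cMdifflog^N\rad_k w\in\Hb^{s,\gamma-0}$ for all $N$, so the elliptic-regularity bootstrap below needs only to produce regularity for $D_{\vlog}(\cMdifflog^N\rad_k w)$. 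Finally $\taut_*(Lw)\in\dCI(\Mlog)\subset\Hb^{\infty,\gamma-0}$, and we will repeatedly use $D_{\vlog}\cMdifflog\subset\cMdifflog D_{\vlog}+\cMdifflog$, $[\cMdifflog,\log\rholog]\subset\CIlog$, and $\cIlog\subset\Psib^{-1}\cMdifflog$ (Lemma~\ref{lemma:inclusion}).

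For $k=-1$, $\rad_{-1}$ is the identity and the claim is exactly the module regularity above. For the inductive step (which also covers $k=0$), suppose the statement holds for all smaller $k$. Factor $\rad_k = R\,Q$, $Q=\prod_{j=1}^k(R+\imath j)^{2j+1}$; all factors are polynomials in $R$, hence commute, and moving $D_{\vlog}$ past $Q$ shifts $R\mapsto R-\imath$ in each factor, so $D_{\vlog}\rad_k = Q^-\,D_{\vlog}R$ with $Q^- = \prod_{j=1}^k (R+\imath(j-1))^{2j+1} = \rad_{k-1}\,M^2$, where $M := \prod_{i=0}^{k-1}(R+\imath i)\in\cMdifflog^{k}$. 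By Lemma~\ref{lemma:Lnormop}, $\taut_* L + 4D_{\vlog}R\in\badmod^2$, so
$$
D_{\vlog}\rad_k\, w = -\tfrac14\,\rad_{k-1}M^2\,\taut_*(Lw)\ +\ \rad_{k-1}M^2 A\, w,\qquad A\in\badmod^2 .
$$
The first term is a fixed-order b-differential operator applied to an element of $\dCI$, hence lies in $\Hb^{\infty,\gamma-0}$. For the second, use $\badmod\subset\cMdifflog+(\log\rholog)\cMdifflog$ to write $M^2 A\in\sum_{p\le 2}(\log\rholog)^p\cMdifflog^{2k+2}$, commuting the at most two offending $\log\rholog$ factors out of the module product; then commute $\rad_{k-1}$ past these module factors. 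The main terms have the form $(\log\rholog)^p\,\cMdifflog^{2k+2}\rad_{k-1}w\in\Hb^{s+k,\gamma-0}$ by the inductive hypothesis (with $N'=2k+2$; the $(\log\rholog)^p$ costs only a further $-0$ in the weight). The commutator terms $[\rad_{k-1},\cMdifflog^{2k+2}]w$ are, by the logified analogue of Lemma~\ref{lemma:rad-factors-through-module}, a finite sum of terms $\Psib^{-j-1}(\log\rholog)^{p'}\cMdifflog^{\bullet}\rad_{k-1-j}w$; the strong inductive hypothesis makes $\rad_{k-1-j}(\,\cdot\,)w$ gain $k-j$ orders, $\Psib^{-j-1}$ supplies the remaining $j+1$, and the logs are harmless, so these too lie in $\Hb^{s+k,\gamma-0}$. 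Hence $D_{\vlog}\rad_k w\in\Hb^{s+k,\gamma-0}$, and since $\rad_k w\in\Hb^{s,\gamma-0}$ a priori with $D_{\vlog}$ elliptic on its microsupport, elliptic regularity upgrades this to $\rad_k w\in\Hb^{s+(k+1),\gamma-0}$. The version with $\cMdifflog^N$ prepended is the same computation after $D_{\vlog}\cMdifflog^N\subset\cMdifflog^N D_{\vlog}+\cMdifflog^N$, noting that $\cMdifflog^N$ applied to each term above (and to $\dCI$) is again controlled by module regularity and the inductive hypothesis; elliptic regularity then gives $\cMdifflog^N\rad_k w\in\Hb^{s+(k+1),\gamma-0}$.

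The genuinely new difficulty, relative to the short-range template, is the logified commutator calculus invoked above, i.e., the analogues of Lemmas~\ref{lemma:rad-vf-ideal}--\ref{lemma:rad-factors-through-module}. For classical coefficients, each commutator of a radial vector field with a module generator produces a factor of the ideal $\cI$ and hence a gain of a derivative through $\cI\subset\Psib^{-1}\cMdiff$; but the generators of $\badmod$ carry $\log\rholog$'s, and $(R+\imath k)$ no longer maps $\rholog\log\rholog$ cleanly onto $(\rholog\log\rholog)(R+\imath(k-1))$ modulo a term vanishing quadratically at $S_+$ — there is an extra term of one lower $\log$-degree, coming from $R(\rholog\log\rholog)=(\rholog\log\rholog)R+\imath^{-1}(\rholog\log\rholog+\rholog)$. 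Tracking how these logarithms proliferate under iterated commutation, and checking that the bookkeeping closes \emph{precisely} when $\rad_k$ carries the multiplicities $2j+1$ — equivalently, that $\logset$, with $\log$-powers up to $2k$ at order $\rholog^k$, is exactly the index set testable by $\rad_k$ — is the step I expect to require the most care, and it is also where the uniform (in $k$) loss $\gamma\to\gamma-0$ enters, since each $\log\rholog$ and each application of $\taut_*$ costs an arbitrarily small amount of decay.
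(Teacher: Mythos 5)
Your overall architecture matches the paper's: induction on $k$, the identity $\taut_*L+4D_{\vlog}R\in\badmod^2$ from Lemma~\ref{lemma:Lnormop}, module regularity (Propositions~\ref{prop:radial-b-estimate} and~\ref{prop:bad-module}) as the base case, and ellipticity of $D_{\vlog}$ on $\WFb(w)$ to convert control of $D_{\vlog}\rad_kw$ into the gain of one order. Your factorization $D_{\vlog}\rad_k=\rad_{k-1}M^2D_{\vlog}R$ with $M=\prod_{i=0}^{k-1}(R+\imath i)$ is a correct, mildly different bookkeeping from the paper's, which instead peels off the top factor $(R+\imath(k-1))^{2k+1}$ of $D_{\vlog}\rad_k$ and maintains the invariant of Lemma~\ref{lemma:iterative-step-log} through a nested induction. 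That difference is cosmetic; both reductions stand or fall with the same commutator calculus.

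The genuine gap is that calculus: the ``logified analogue of Lemma~\ref{lemma:rad-factors-through-module}'' you invoke is false in the form you use it. In the short-range case every commutation of a radial factor with the module produces a factor of the ideal $\cI\subset\Psib^{-1}\cMdiff$, whence the $\Psib^{-j-1}$ gain you attach to each drop from $\rad_{k-1}$ to $\rad_{k-1-j}$. After logification this fails at exactly the two places that matter: by Lemma~\ref{lemma:log-commutators1}, $[R,\rholog\log\rholog D_{\rholog}]=\imath^{-1}\rholog D_{\rholog}\in\cMdifflog$ and $[R,\log\rholog]=\imath^{-1}$, so a radial factor spent on a $\badmod$ generator, or on one of your $(\log\rholog)^p$ prefactors, produces \emph{no} additional vanishing at $S_+$ and hence no negative-order gain. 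Concretely, in your one-pass commutation of $\rad_{k-1}$ through $(\log\rholog)^p\cMdifflog^{2k+2}$, the term in which the single $j=0$ factor $R$ hits $(\log\rholog)^p$ is a constant times $(\log\rholog)^{p-1}\cMdifflog^{2k+2}\prod_{i=1}^{k-1}(R+\imath i)^{2i+1}w$: the surviving radial product is not of the form $\cMdifflog^{N'}\rad_{k'}$ covered by your inductive hypothesis, and there is no compensating $\Psib^{-1}$ factor, so a priori this term lies only in $\Hb^{s,\gamma-0}$ rather than the required $\Hb^{s+k,\gamma-0}$. Repairing this requires exactly the accounting of Lemmas~\ref{lemma:log-commutators1}--\ref{log-commutators3} (two radial factors must be spent per $\badmod$ factor, each passage through $\cIlog$ shifts the remaining radial factors by $\imath$, and the residual radial product must be tracked so that it always contains a full $\rad_{k-1-j}$), leading to the correct conclusion of Lemma~\ref{lemma:iterative-step-log}, which carries only $\cIlog^{j}$ -- not $\cIlog^{j+1}$ -- alongside $\rad_{k-1-j}$; that weakening relative to the short-range case is precisely what the multiplicities $2j+1$ are there to absorb. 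You flag this bookkeeping as the step requiring the most care, but it is the entire content of the proposition, and as written your proposal asserts its conclusion rather than proving it.
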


We defer for now a discussion of the proof of
Proposition~\ref{prop:ff-gain-in-diff-log} and note that the following
analogue of Lemma~\ref{lemma:ff-improvement} immediately follows (with
the same proof):
\begin{lemma}
  \label{lemma:ff-improvement-log}
  Let $U$ be a neighborhood of $C_{0}$ in $[\Mlog ; S]$ as in the
  discussion immediately preceding equation~\eqref{eq:c_0-module-reg}.
  Then for any $\epsilon > 0$ and $N'\in \NN$,
  \begin{equation}
    \label{eq:inclusion-log}
    D_{\vlog}^{k+1}\cMdifflog\rad_{k} w \in (\rholog / \vlog)^{N'}\vlog^{-\epsilon}\Hb^{s,\gamma-0}(U).
  \end{equation}
\end{lemma}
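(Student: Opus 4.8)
The plan is to transcribe, essentially verbatim, the proof of Lemma~\ref{lemma:ff-improvement} to the logified manifold $\Mlog$, replacing Proposition~\ref{prop:ff-gain-in-diff} by Proposition~\ref{prop:ff-gain-in-diff-log} and carrying the extra $-0$ loss in the decay slot through the estimates. As in the short-range case, the proof combines a \emph{regularity-without-decay} statement with a \emph{decay-with-worse-regularity} statement. For the first, Proposition~\ref{prop:ff-gain-in-diff-log} directly supplies
$$
\cMdifflog^{N}\rad_{k}w\in\Hb^{s+(k+1),\gamma-0}(\Mlog)\quad\text{for all }N,
$$
which is exactly the asserted inclusion in the absence of the factors $(\rholog/\vlog)^{N'}$ and $\vlog^{-\epsilon}$.

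For the second ingredient I would first observe that each factor $\rholog D_{\rholog}+\vlog D_{\vlog}+\imath j$ of $\rad_{k}$ lies in $\cMdifflog$ (this module being generated over $\CIlog$ by $\rholog D_{\rholog}$, $\rholog D_{\vlog}$, $\vlog D_{\vlog}$, $D_{\ylog}$, together with $I$), so that $\rad_{k}\in\cMdifflog^{(k+1)^{2}}$. Since $w\in\varpi^{\ell}\Hb^{\infty,*,*}(U)$ for every $\ell$ by the infinite-order $C_{0}$-vanishing recalled just above, module regularity then yields, for every $\tilde N$ and every $N'$,
$$
\cMdifflog^{\tilde N}\rad_{k}w\in(\rholog/\vlog)^{N'}\Hb^{s,\gamma-0}(U).
$$
Using $D_{\vlog}\vlog\in\cMdifflog$, hence $\vlog^{k+1}D_{\vlog}^{k+1}\in\cMdifflog^{k+1}$, I would apply the previous display with $\tilde N=N+k+1$ and commute the factor $\vlog^{-(k+1)}$ out to obtain
$$
D_{\vlog}^{k+1}\cMdifflog^{N}\rad_{k}w\in\vlog^{-(k+1)}(\rholog/\vlog)^{N'}\Hb^{s,\gamma-0}(U).
$$
Finally, fixing $k$, taking $N'$ large, and interpolating this growth bound against the regularity statement of Proposition~\ref{prop:ff-gain-in-diff-log} (using that $D_{\vlog}$ is elliptic on $\WFb(w)\subset\{\rholog=\vlog=0\}$ and may be commuted through $\cMdifflog$ modulo $\cMdifflog$) converts the $\vlog^{-(k+1)}$ growth into an arbitrarily small power $\vlog^{-\epsilon}$, which is the claim.

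The only genuinely new feature relative to the short-range computation is the $-0$ in the decay order $\gamma-0$, which is inherited verbatim from Proposition~\ref{prop:ff-gain-in-diff-log} and reflects the $\rholog\log\rholog$ terms produced by the change of smooth structure. I therefore expect no real obstacle in this lemma itself; the substantive work has been deferred to Proposition~\ref{prop:ff-gain-in-diff-log}, whose proof must control the $\rholog\log\rholog\,D_{\rholog}$ and $\rholog\log\rholog\,D_{\vlog}$ contributions introduced by the coordinate change $v\mapsto\vlog$ and must accommodate the increased multiplicities $2j+1$ appearing in the definition of $\rad_{k}$.
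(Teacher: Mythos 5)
Your proposal is correct and matches the paper's intended argument exactly: the paper states that Lemma~\ref{lemma:ff-improvement-log} ``immediately follows (with the same proof)'' as Lemma~\ref{lemma:ff-improvement}, with Proposition~\ref{prop:ff-gain-in-diff-log} supplying the regularity input and the infinite-order vanishing at $C_0$ supplying the decay input, just as you describe. Your observations that $\rad_k\in\cMdifflog^{(k+1)^2}$ (rather than $\cMdifflog^{k+1}$, owing to the multiplicities $2j+1$) and that the $\gamma-0$ loss is inherited verbatim from Proposition~\ref{prop:ff-gain-in-diff-log} are the only adaptations needed, and you have made them correctly.
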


As in the previous section, we can then integrate $k+1$ times in
$\vlog$ to obtain the desired vanishing (and hence polyhomogeneity)
statements at $\scri$.  This completes the proof of
Theorem~\ref{mainthm} in the long-range case.

We now turn our attention to the proof of
Proposition~\ref{prop:ff-gain-in-diff-log}.  Suppose we are able to
prove the following lemma (which is the analogue of
Lemma~\ref{lemma:rad-factors-through-module}):

\begin{lemma}
  \label{lemma:iterative-step-log}
  If $w \in \Hb^{s,\gamma}$ is as above and $Lw \in \dCI(\Mlog)$, then
  \begin{align*}
    D_{\vlog}\rad_{k} w \in \sum_{j=0}^{k}\cIlog^{j} \cMdifflog^{1 +
      2(t_{k}-t_{k-1-j})}\badmod^{2}\rad_{k-1-j}w + \dCI(\Mlog).
  \end{align*}
\end{lemma}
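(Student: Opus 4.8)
The plan is to establish Lemma~\ref{lemma:iterative-step-log} by induction on $k$, mirroring the short-range argument of Lemma~\ref{lemma:rad-factors-through-module} and Proposition~\ref{prop:ff-gain-in-diff} but carefully tracking the extra $\log\rholog$ factors. The base case $k=0$ rests on Lemma~\ref{lemma:Lnormop}, which tells us $\taut_*L = 4\pd[\vlog](\rholog\pd[\rholog]+\vlog\pd[\vlog]) + \badmod^{2}$; since $\rad_0 = \rholog D_{\rholog}+\vlog D_{\vlog} = R$ here, we get $D_{\vlog}\rad_0 w = \tfrac{1}{4}\taut_* Lw + \badmod^{2}w \in \dCI(\Mlog) + \badmod^{2}w$, which is the $k=0$ instance (note $t_0 - t_{-1} = 0$, so the module exponent is $1$, consistent with writing $\badmod^2 w$ inside $\cMdifflog^1\badmod^2$).

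For the inductive step, I would write $\rad_k = (R+\imath k)^{2k+1}\rad_{k-1}$ and compute
\begin{equation*}
  D_{\vlog}\rad_k w = D_{\vlog}(R+\imath k)^{2k+1}\rad_{k-1}w.
\end{equation*}
Using the identity $D_{\vlog}(R+\imath k) = (R+\imath(k-1))D_{\vlog}$ on the front face coordinates (where $R = \rholog D_{\rholog}$ after passing to $\varpi,\vlog,\ylog$), one commutes all $2k+1$ factors of $(R+\imath k)$ past $D_{\vlog}$, converting them into $(R+\imath(k-1))^{2k+1}$ acting on $D_{\vlog}\rad_{k-1}w$. Then I apply the inductive hypothesis to rewrite $D_{\vlog}\rad_{k-1}w$ as an element of $\sum_{j=0}^{k-1}\cIlog^j\cMdifflog^{1+2(t_{k-1}-t_{k-2-j})}\badmod^{2}\rad_{k-2-j}w + \dCI(\Mlog)$. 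The key bookkeeping step is then to commute the block $(R+\imath(k-1))^{2k+1}$ through the $\cIlog$, $\cMdifflog$, and $\badmod$ factors. The analogue of the first part of Lemma~\ref{lemma:rad-vf-ideal}—but now for $\CIlog$ rather than $\CI$ coefficients—is that $(R+\imath k)\cIlog \subset \cIlog(R+\imath(k-1)) + \cIlog\cdot(\text{extra }(R+\imath k)\text{ factor})$: differentiating a $\log\rholog$ coefficient produces a lower-order term without the log, which is exactly why each level $j$ in the statement carries the enlarged exponent $1+2(t_k - t_{k-1-j})$ instead of the short-range $j+1$. Each of the $2k+1$ new log-producing commutations, iterated across the $\badmod$ and $\cMdifflog$ factors, accounts for the quadratic growth $2(t_k - t_{k-1-j})$ in the triangular numbers; one checks $2(t_{k-1}-t_{k-2-j}) + 2k = 2(t_k - t_{k-1-j})$ for the shifted index, and $(R+\imath(k-1))^{2k+1}$ absorbed at the end gives the $j=0$ term $\cMdifflog^{1+2k}\badmod^2\rad_{k-1}w$. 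Finally, since $\badmod\subset\cMdifflog + (\log\rholog)\cMdifflog$ by Lemma~\ref{lemma:changeofmodule} and $\cIlog\subset\Psib^{-1}\cMdifflog$ by Lemma~\ref{lemma:inclusion}, and since $[\cMdifflog,\cIlog]\subset\cIlog$ (because $\cMdifflog$ preserves $\cIlog$), one commutes all the $\cIlog$ factors to the front exactly as in the short-range proof, and the $\dCI(\Mlog)$ term propagates through since $\dCI$ is preserved by all these operations.

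The main obstacle I anticipate is controlling the commutators $[(R+\imath j)^{2j+1}, \badmod]$ and $[(R+\imath j)^{2j+1}, \cMdifflog]$ without losing track of the precise power of $\cIlog$ and of $\cMdifflog$. In the short-range case the clean identity $[R,\cMdiff]\subset\cI\cMdiff$ (Lemma~\ref{lemma:rad-vf-ideal}, second part) does the work, but in the logified setting $R$ has a slightly subtler commutator structure with the bad module—the generators $\rholog\log\rholog\, D_{\rholog}$ and $\rholog\log\rholog\, D_{\vlog}$ are only \emph{approximately} homogeneous of degree zero under the scaling action, picking up an extra $\rholog$ (no log) term upon commutation with $R$, consistent with $[\rholog D_{\rholog}, \rholog\log\rholog] = \imath^{-1}(\rholog\log\rholog + \rholog)$. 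I would verify carefully that $[R,\badmod]\subset\cIlog\badmod + \cIlog\cMdifflog$, which suffices because those extra terms are milder (genuinely in $\cIlog\cMdifflog$, which is $\Psib^{-1}\cMdifflog^2$) and get folded into a later level $j$ of the sum. The raising of the multiplicity to $2j+1$ in the definition of $\rad_k$ is precisely engineered so that each $(R+\imath j)$-block has enough factors to both convert $\log$-coefficients into $\log$-free ones at every level and still leave behind the nonnegative power of $(R+\imath(k-1))$ needed to continue the induction; I would double-check the arithmetic $\sum_{j} (2j+1) = (k+1)^2$ against the growth of $\dim\logset$ near each $\rholog^k$ to confirm the multiplicities match. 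Once the commutator estimates are in hand, the rest is the same absorption-and-interpolation machinery already used for Proposition~\ref{prop:ff-gain-in-diff}, with the $\gamma-0$ rather than $\gamma$ decay slot being the only (expected and harmless) cost of passing through $\Mlog$.
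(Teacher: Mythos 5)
Your overall strategy coincides with the paper's: induction on $k$, with the base case read off from Lemma~\ref{lemma:Lnormop}, and the inductive step obtained by writing $D_{\vlog}\rad_{k}w=(R+\imath(k-1))^{2k+1}D_{\vlog}\rad_{k-1}w$, inserting the inductive hypothesis, and commuting the block $(R+\imath(k-1))^{2k+1}$ through the $\cIlog^{j}\cMdifflog^{\bullet}\badmod^{2}$ factors; your closing arithmetic with the triangular numbers is exactly the paper's ($s(k-1,j)+2j+2=s(k,j)$ and $s(k-1,j-1)+2k=s(k,j)$).

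The one substantive problem is in the commutator identities you propose to verify, which are the crux of the whole lemma. The inclusion $[R,\badmod]\subset\cIlog\badmod+\cIlog\cMdifflog$ is false: for instance $[R,\rholog\log\rholog D_{\rholog}]=\imath^{-1}\rholog D_{\rholog}$, which lies in $\cMdifflog$ but gains no factor of $\cIlog$ (nor can it, since $\rholog D_{\rholog}$ is a generator of $\cMdifflog$ with unit coefficient). The correct statement, which the paper isolates as $(R+\imath k)\badmod\subset\badmod(R+\imath k)+\cMdifflog$, says that spending a radial factor merely strips one logarithm ($\badmod\to\cMdifflog$) with no gain of vanishing. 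Similarly, the correct analogue of Lemma~\ref{lemma:rad-vf-ideal} for the logified ideal is $(R+\imath k)\cIlog\subset\cIlog(R+\imath(k-1))+\rholog\CIlog+\cIlog^{2}$: the troublesome error coming from $(R+\imath k)(a\rholog\log\rholog)$ is the term $\imath^{-1}\rholog a\in\rholog\CIlog$, which again does not gain a power of the ideal but only loses the log. Consequently these error terms are \emph{not} ``folded into a later level $j$ of the sum'' as you suggest; they remain at the same level $j$ and must be absorbed by increasing the $\cMdifflog$ exponent, which is precisely the mechanism forcing the quadratic growth $1+2(t_{k}-t_{k-1-j})$ and the multiplicities $2j+1$ in $\rad_{k}$. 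Since your target exponents are the correct ones, the induction still closes once you replace your proposed commutators with the weaker true ones; but you will need the full iterated bookkeeping that the paper organizes in Lemma~\ref{lemma:log-commutators1} (items (1)--(6)), Lemma~\ref{lemma:log-commutators-2}, and Lemma~\ref{log-commutators3}, rather than the short-range-style ``commute the ideal factors to the front'' argument.
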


\begin{proof}[Proof of Proposition~\ref{prop:ff-gain-in-diff-log}]
  The proposition holds if there are no factors of the radial vector
  field (i.e., if $k = -1$) by propagation of singularities
(Proposition~\ref{prop:radial-b-estimate}).  If
  $k=0$, we notice that
  \begin{equation*}
    L + 4 D_{\vlog} \rad_{0} \in \badmod^{2},
  \end{equation*}
  so because $Lw \in \dCI$ and $\badmod^{2} w \in \Hb^{s,\gamma-0}$
  (by \eqref{badmodsobolev}),
  we have $D_{\vlog} \rad_{0} \in \Hb^{s,\gamma-0}$.  Because
  $D_{\vlog}$ is elliptic on $\WFb(w)$, this yields $\rad_{0} w \in
  \Hb^{s+1,\gamma - 0}$.  

  To finish the $k=0$ case, we now rewrite
  $D_{\vlog}\cMdifflog^{N}\rad_{0}w$ by commuting $D_{\vlog}$ with
  $\cMdifflog$.  In particular, $D_{\vlog}\cMdifflog
  \subset \cMdifflog D_{\vlog} + \cMdifflog$ and so
  \begin{equation*}
    D_{\vlog}\cMdifflog^{N} \subset \cMdifflog^{N}D_{\vlog} + \cMdifflog^{N}.
  \end{equation*}
  We thus obtain
  \begin{align*}
    D_{v}\cMdifflog^{N}\rad_{0} w &\subset \cMdifflog^{N} D_{\vlog}\rad_{0}
    w + \cMdifflog^{N}\rad_{0}w \\
    &\subset \cMdifflog^{N}Lw +
    \cMdifflog^{N}\badmod^{2}w \subset \Hb^{s,\gamma-0}(\Mlog).
  \end{align*}
  The ellipticity of $D_{\vlog}$ on $\WFb(w)$ now proves the $k=0$
  case of the proposition.

  Now suppose $k\geq 1$ and that the proposition has been proved with
  $k$ replaced by $k-1$.  We then use
  Lemma~\ref{lemma:iterative-step-log} to see that
  \begin{align*}
    D_{\vlog} \rad_{k}w &\in \sum_{j=0}^{k}\cIlog^{j}\cMdifflog^{1 +
      2(t_{k}-t_{k-1-j})}\badmod^{2}\rad_{k-1-j}w \\
    &\subset
    \sum_{j=0}^{k}\Psib^{-j}\cMdifflog^{1+j+2(t_{k}-t_{k-1-j})}\badmod^{2}\rad_{k-1-j}w
    \in \Hb^{s+k,\gamma-0}(\Mlog)
  \end{align*}
  by the induction hypothesis.  Because $D_{\vlog}$ is elliptic in the
  microlocally relevant region, we see that $\rad_{k}w \in
  \Hb^{s+(k+1),\gamma-0}(\Mlog)$.

  A similar result holds even with a factor $\cMdifflog^{N}$ added, by
  the same argument as in the $k=0$ case (and using the fact that $\cMdifflog$
  preserves $\CIlog$); this completes the proof of
  the proposition.
\end{proof}

We now turn our attention to the proof of
Lemma~\ref{lemma:iterative-step-log}.  The intuitive idea behind the
proof is as before, namely that commuting the radial vector field
through the various factors yields an improvement.  Unfortunately, it
is a bit more complicated than in the short-range case: 
\begin{lemma}
  \label{lemma:log-commutators1}
  Let $R = \rholog D_{\rholog} + \vlog D_{\vlog}$.  The following
  relations hold:
  \begin{enumerate}
    \item $(R + \imath k)\badmod \subset \badmod (R + \imath k) + \cMdifflog$
    \item $\badmod \cMdifflog \subset \cMdifflog \badmod$
    \item $(R+\imath k) \cMdifflog \subset \cMdifflog (R + \imath k) +
    \cIlog \cMdifflog$
    \item $\cMdifflog \cIlog \subset \cIlog \cMdifflog$
    \item $(R + \imath k)\cIlog \subset \cIlog (R + \imath (k-1)) + \rholog
    \CIlog + \cIlog^{2}$
    \item $(R + \imath k)\rholog \CIlog \subset \rholog \CIlog (R+\imath
    (k-1)) + \rholog \cIlog$
  \end{enumerate}
\end{lemma}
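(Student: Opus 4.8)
The plan is to prove all six relations by reducing, via the Leibniz rule for the vector fields involved, to a finite set of computations on the generators of $\cMdifflog$, $\badmod$, and $\cIlog$ over $\CIlog$. The only inputs needed beyond bookkeeping are the elementary facts about $R = \rholog D_{\rholog} + \vlog D_{\vlog}$:
\begin{equation*}
  R\rholog = \imath^{-1}\rholog,\quad R\vlog = \imath^{-1}\vlog,\quad R(\rholog\log\rholog) = \imath^{-1}(\rholog\log\rholog + \rholog),\quad R\ylog = 0,
\end{equation*}
so that $R$ maps $\CIlog$ into $\cIlog$ and $\rholog\CIlog$ into $\rholog\cIlog$, together with the intertwining identities (as operators, $\rholog$ etc.\ being multiplication)
\begin{equation*}
  (R+\imath)\circ\rholog = \rholog\circ R,\quad (R+\imath)\circ\vlog = \vlog\circ R,\quad (R+\imath)\circ(\rholog\log\rholog) = (\rholog\log\rholog)\circ R - \imath\rholog,
\end{equation*}
and the fact that $R$ commutes with each of the generators $\rholog D_{\rholog}$, $\rholog D_{\vlog}$, $\vlog D_{\vlog}$, $D_{\ylog}$ of $\cMdifflog$ (these are homogeneous of degree zero under the dilation $(\rholog,\vlog)\mapsto(t\rholog,t\vlog)$ whose generator is $\imath R$).

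First I would dispatch (5) and (6): writing $f\in\cIlog$ as $f = \rholog a_{1} + \vlog a_{2} + (\rholog\log\rholog)a_{3}$ with $a_{i}\in\CIlog$, I apply the three intertwiners and Leibniz; every term lands in $\cIlog(R+\imath(k-1)) + \cIlog^{2}$ except the one coming from the $-\imath\rholog$ correction in the third intertwiner applied to $a_{3}$, which lies in $\rholog\CIlog$ — this is exactly the extra slot in (5), and (6) is the same computation restricted to the $\rholog$-generator. Relations (3) and (4) are then immediate: for a generator $V$ of $\cMdifflog$ and $a\in\CIlog$ one has $(R+\imath k)\circ(aV) = (aV)\circ(R+\imath k) + (Ra)V$ since $[R,V]=0$ and $Ra\in\cIlog$, which gives (3); and for $f\in\cIlog$ one has $V\circ f = f\circ V + (Vf)$ with $Vf\in\cIlog$ because $\cMdifflog$ preserves $\cIlog$ (noted in Section~\ref{sec:logification}), giving (4).

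The substantive work is in (1) and (2), which involve the bad generators $\rholog\log\rholog D_{\rholog}$ and $\rholog\log\rholog D_{\vlog}$. The point that makes them work is the pair of model commutators
\begin{equation*}
  [R,\,\rholog\log\rholog D_{\rholog}] = -\imath\,\rholog D_{\rholog}\in\cMdifflog,\qquad [R,\,\rholog\log\rholog D_{\vlog}] = -\imath\,\rholog D_{\vlog}\in\cMdifflog,
\end{equation*}
in which the logarithmic terms cancel; combined with $[R,V]=0$ for the ordinary generators this gives $[R,W]\in\cMdifflog$ for every generator $W$ of $\badmod$, and then $(R+\imath k)\circ(bW) = (bW)(R+\imath k) + b[R,W] + (Rb)W$ yields (1) once one checks that $(Rb)W$ (with $Rb\in\cIlog$) can be reorganized into $\cMdifflog$ plus a term absorbed in $\badmod(R+\imath k)$. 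For (2) one computes the commutator of each generator $W$ of $\badmod$ with each generator $V$ of $\cMdifflog$ — e.g.\ $[\rholog\log\rholog D_{\rholog},\rholog D_{\vlog}] = -\imath\,\rholog\log\rholog D_{\vlog}\in\badmod$ and $[\rholog\log\rholog D_{\rholog},\rholog D_{\rholog}]\in\cMdifflog$ — so that $WV = VW + [W,V] \in \cMdifflog\badmod$. I expect the main obstacle to be precisely the bookkeeping in (1) and (2): the bad generators do \emph{not} preserve $\CIlog$ (applied to a $\CIlog$ coefficient they produce $\rholog(\log\rholog)^{2}$ terms), so the coefficient corrections must be reorganized by writing $\rholog(\log\rholog)^{2} = (\log\rholog)(\rholog\log\rholog)$ and recognizing products such as $(\rholog\log\rholog)(\rholog\log\rholog D_{\bullet}) \in \CIlog\cdot\badmod = \badmod$ and $\rholog\cdot(\rholog\log\rholog D_{\bullet}) = (\rholog\log\rholog)(\rholog D_{\bullet}) \in \cMdifflog$; tracking this across every pair of generators, while tedious, requires no idea beyond the model commutators above.
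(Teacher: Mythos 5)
Your proposal follows essentially the same route as the paper's own proof: the same elementary action of $R$ on $\rholog$, $\vlog$, and $\rholog\log\rholog$, the same model commutators $[R,\rholog\log\rholog D_{\rholog}]=\imath^{-1}\rholog D_{\rholog}$ and $[R,\rholog\log\rholog D_{\vlog}]=\imath^{-1}\rholog D_{\vlog}$ lying in $\cMdifflog$, and the same reduction of each statement to generators over $\CIlog$, with all the explicit computations correct. The one delicate point you flag in parts (1)--(2) --- that $(Rb)W$ with $W$ a logarithmic generator produces terms like $(\rholog\log\rholog)^{2}D_{\bullet}$, which sit naturally in $\badmod$ but not manifestly in $\badmod(R+\imath k)+\cMdifflog$ --- is glossed over entirely in the paper's proof (which just decomposes elements of $\badmod$ as $V+a_{1}\rholog\log\rholog D_{\rholog}+a_{2}\rholog\log\rholog D_{\vlog}$ and invokes the model commutators), so your write-up is, if anything, the more candid about where the residual bookkeeping lives.
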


\begin{proof}
  We first observe that if $a\in \cIlog$, then $[R, a] = \rholog
  D_{\rholog}a + \vlog D_{\vlog} a \in \cIlog$.  

  Now we observe that $[R, \rholog \log\rholog D_{\rholog}] =
  \frac{1}{\imath}\rholog D_{\rholog} \in \cMdifflog$ and $[R, \rholog
  \log \rholog D_{\vlog}] = \frac{1}{\imath}\rholog D_{\vlog}\in
  \cMdifflog$.  Any element of $\badmod$ can be written as $V +
  a_{1}\rholog\log\rholog D_{\rholog} + a_{2} \rholog\log\rholog
  D_{\vlog}$, where $V \in \cMdifflog$ and $a_{i}\in \CIlog$, proving
  the first statement.  

  The second statement follows from the observation that $[\cMdifflog
  , \badmod] \subset \badmod$.

  The proof of Lemma~\ref{lemma:rad-vf-ideal}, together with the
  observation that $[R, a] \in \cIlog$ shows that the third
  statement holds.

  The fourth statement follows from the observation that $[\cMdifflog,
  \cIlog] \subset \cIlog$.

  For the fifth statement, we compute.  The proof of
  Lemma~\ref{lemma:rad-vf-ideal} shows that the statement is true for
  elements of $\cIlog$ of the form $a_{1} \rholog + a_{2}\vlog$, so we must
  only show it for elements of the form $a \rholog\log\rholog$, where
  $a \in \CIlog$.  We then compute
  \begin{align*}
    (R+\imath k) a \rholog \log \rholog  = a \rholog \log \rholog
    (R + \imath (k-1)) + \frac{1}{\imath}\rho a + \rholog \log
    \rholog (R a) ,
  \end{align*}
  which lies in the desired space.

  The final statement is similar.  Suppose $a\in \CIlog$, then
  \begin{align*}
    (R+\imath k)\rho a = \rho a (R + \imath (k-1)) + \rho (Ra) .
  \end{align*}
\end{proof}

By repeatedly applying Lemma~\ref{lemma:log-commutators1}, we obtain
the following iterative version of the lemma:
\begin{lemma}
  \label{lemma:log-commutators-2}
  Suppose $\alpha$, $\beta$, $\gamma$, $\delta$, and $\epsilon$ are
  integers, and that $\gamma \geq 1$.  Let $R = \rholog D_{\rholog} +
  \vlog D_{\vlog}$ and let $\tilde{R}^{j}$ denote any product of $j$
  shifts of the radial vector field $R$.  We then have that
  \begin{itemize}
  \item $\displaystyle (R + \imath k )^{\epsilon}\rho^{\alpha}
    \subset \rho ^{\alpha}(R + \imath (k-\alpha))^{\epsilon} +
    \sum_{i=1}^{\epsilon}\sum_{a = 0}^{\min (i, \epsilon -
      i)}\rho^{\alpha + a}\cIlog^{i -
      a}\tilde{R}^{\epsilon - i - a} $
  \item $\!
    \begin{aligned}[t]
      (R+\imath k)^{\epsilon}\cIlog^{\beta} &\subset \sum_{a =
      0}^{\min(\epsilon, \beta)}\rho^{a}\cIlog^{\beta - a}(R + \imath
    (k-\beta))^{\epsilon - a} \\
    &\quad + \sum_{i = 1}^{\epsilon}
    \sum_{a=0}^{\min(\epsilon - i, \beta + i)}\rho^{a}\cIlog^{\beta +
      i - a}\tilde{R}^{\epsilon - i - a} 
    \end{aligned}$

  \item $
    \begin{aligned}[t]
      (R+ \imath k)^{\epsilon}\cMdifflog^{\gamma}\badmod^{\delta}
      &\subset \sum_{d=0}^{\min(\delta, \epsilon)}\cMdifflog^{\gamma +
        d}\badmod^{\delta - d}(R+ \imath k)^{\epsilon - d} \\
      &\quad + \sum_{d=0}^{\min(\delta,
        \epsilon)}\sum_{i=1}^{\epsilon-d}\sum_{a = 0}^{\min(i, \epsilon
        - d - i)} \rho^{a}\cIlog^{i - a}\cMdifflog^{\gamma +
        d}\badmod^{\delta - d}\tilde{R}^{\epsilon - d - i - a}
    \end{aligned}$
  \end{itemize}
  \begin{align*}
  \end{align*}
\end{lemma}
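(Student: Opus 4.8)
The plan is to prove the three inclusions in the order listed, each by induction on the exponent $\epsilon$ of the radial factor, feeding each proven inclusion into the proof of the next. In every case the inductive step writes $(R+\imath k)^{\epsilon} = (R+\imath k)\,(R+\imath k)^{\epsilon-1}$, applies the inductive hypothesis to the inner $(R+\imath k)^{\epsilon-1}$, uses one of the six single-step relations of Lemma~\ref{lemma:log-commutators1} to move the outermost $(R+\imath k)$ to the right past the factor it now confronts, and then re-indexes the resulting sums into the advertised form; the base case $\epsilon\le 1$ of each inclusion is either trivial or is one of the relations in Lemma~\ref{lemma:log-commutators1}.

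For the first inclusion I would use the exact commutation $(R+\imath j)\rho = \rho\,(R+\imath(j-1))$ — the $\rholog$-analogue of the computation in the proof of Lemma~\ref{lemma:rad-vf-ideal} — to slide a power $\rho^{\alpha}$ across all $\epsilon$ factors, which pins the shift in the leading term at exactly $\alpha$; the error terms $\rho^{\alpha+a}\cIlog^{i-a}\tilde R^{\epsilon-i-a}$ come only from the coefficient ring, via $R\,\CIlog\subset\cIlog$ and relations~(5)--(6) of Lemma~\ref{lemma:log-commutators1}, with $i$ counting the total number of error-producing commutations and $a$ counting how many of the emitted $\cIlog$ factors are absorbed as extra powers of $\rho$ by relation~(5). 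The second inclusion is the same induction applied to $\cIlog^{\beta}$: relation~(4) shuffles $\cIlog$ past $\cMdifflog$-free factors, relation~(5) trades one $\cIlog$ for the lowered shift at the cost of a $\rho\,\CIlog+\cIlog^{2}$ error, relation~(6) disposes of the $\rho\,\CIlog$, and the first inclusion is invoked whenever a bare power of $\rho$ must be carried to the far left. The two $\min$'s in the statement are exactly the constraints that one cannot absorb more $\rho$'s than there are $\cIlog$ factors on hand, nor keep more radial factors than $\epsilon$ permits.

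The third inclusion, $(R+\imath k)^{\epsilon}\cMdifflog^{\gamma}\badmod^{\delta}$, is the substantive one and I expect its bookkeeping to be the main obstacle. The mechanism is: whenever $(R+\imath k)$ meets a $\badmod$ factor, relation~(1) either commutes past it or collapses it to a $\cMdifflog$ factor at no cost in $\cIlog$ — the number of such collapses is the index $d$, which is why the output carries $\cMdifflog^{\gamma+d}\badmod^{\delta-d}$; whenever it meets a $\cMdifflog$ factor, relation~(3) commutes past but emits a $\cIlog\,\cMdifflog$ error; relation~(2) keeps the $\cMdifflog$ factors assembled to the left of the $\badmod$ factors; and relation~(4) together with the already-established second inclusion sorts the emitted $\cIlog$ and $\rho$ factors into the normal form $\rho^{a}\cIlog^{i-a}$ at the outside. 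The delicacy is that these operations must be interleaved while $d$, $i$, $a$ and the surviving radial exponent $\epsilon-d-i-a$ all remain non-negative, and the ``budget'' of lost radial factors must be apportioned correctly among its three sources: a $\badmod$-collapse (relation~(1)) is free, whereas crossing a $\cMdifflog$ (relation~(3)) or absorbing a $\rho$ (relation~(5)) is not, and the triple sum in the statement is precisely the record of this accounting.

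Concretely I would organize the third inclusion by first using relations~(1) and (2) to push all $d$ of the $\badmod$-to-$\cMdifflog$ collapses to the outside, reducing, within each fixed-$d$ term, to commuting the remaining $(R+\imath k)^{\epsilon-d}$ fully to the right past $\cMdifflog^{\gamma+d}$ (with relation~(3) errors) and past $\badmod^{\delta-d}$ (cleanly), which is handled by repeated use of relation~(3) followed by the second inclusion; the hypothesis $\gamma\ge 1$ is preserved throughout since the exponent of $\cMdifflog$ never decreases. The only genuinely non-routine point is checking that the family of terms on the right-hand side is closed under one further application of $(R+\imath k)$ — so that the induction closes — which is a direct if tedious verification against the single-step relations; everything else is re-indexing.
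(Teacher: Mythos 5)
Your proposal matches the paper's own proof: the paper establishes the lemma by exactly this induction on $\epsilon$, peeling off one factor of $(R+\imath k)$, applying the inductive hypothesis to the inner power, and pushing the remaining factor to the right using the single-step relations of Lemma~\ref{lemma:log-commutators1}, and it writes out only the first (easiest) inclusion in detail, leaving the other two to the same re-indexing. One small caution on your heuristic accounting: a $\badmod$-collapse via relation~(1) is not ``free'' of radial factors --- the commutator term consumes one, which is precisely why the leading term of the third inclusion carries $(R+\imath k)^{\epsilon-d}$ rather than $(R+\imath k)^{\epsilon}$ --- though your triple sum is otherwise consistent with the stated exponents.
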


\begin{remark}
  Lemma~\ref{lemma:log-commutators1} implies that $(R+\imath
  k)^{\epsilon}\rholog^{\alpha}
  \cIlog^{\beta}\cMdifflog^{\gamma}\badmod^{\delta}$ is contained in a
  sum of terms of the form
  \begin{equation*}
    \rholog^{a}\cIlog^{b}\cMdifflog^{c}\badmod^{d}\tilde{R}^{e},
  \end{equation*}
  where all exponents are nonnegative, $\gamma + \delta=
  c + d$, and $2\alpha + \beta + 2\gamma + \delta + \epsilon = 2a + b
  + 2c + d + e$.  The leading terms are those with $a + b = \alpha +
  \beta$.
\end{remark}

\begin{proof}
  The main idea that one can ``spend'' a power of $(R + \imath k)$ to
  do one of the following:
  \begin{itemize}
  \item Turn a factor of $\badmod$ into a factor of $\cMdifflog$,
  \item Turn a factor of $\cMdifflog$ into a factor of $\cIlog$,
  \item Turn a factor of $\cIlog$ into a factor of $\rho \CIlog +
    \cIlog^{2}$, or
  \item Turn a factor of $\rho$ into a factor of $\rho \cIlog$.
  \end{itemize}
  Moreover, commuting the radial vector field through a power of
  $\rho$ or $\cIlog$ shifts it by $\imath$.

  We show only the easiest of the three cases to indicate the method
  of proof.  

  By applying Lemma~\ref{lemma:log-commutators1} repeatedly, we see that
  \begin{equation*}
    (R+\imath k)\rho^{\alpha} \subset \rho^{\alpha}(R + \imath
    (k-\alpha)) + \rho^{\alpha}\cIlog.
  \end{equation*}
  Now suppose that we have shown the first statement for $\epsilon$.
  We have
  \begin{align*}
    (R + \imath k)^{\epsilon+1} \rho^{\alpha} &\subset (R+ \imath
    k)\rho^{\alpha}(R + \imath (k-\alpha))^{\epsilon} \\
    &\quad + (R + \imath
    k)\sum_{i=1}^{\epsilon}\sum_{a = 0}^{\min(i, \epsilon -
      i)}\rho^{\alpha + a}\cIlog^{i-a}\tilde{R}^{\epsilon-i-a} \\
    &\subset \rho^{\alpha}(R + \imath (k-\alpha))^{\epsilon + 1} +
    \rho^{\alpha}\cIlog (R + \imath (k-\alpha))^{\epsilon} \\
    &\quad + \sum_{i=1}^{\epsilon} \sum_{a = 0}^{\min (i, \epsilon -
      i)}\rho^{\alpha + a}\cIlog^{i-a}\tilde{R}^{\epsilon + 1 - i - a}
    \\
    &\quad + \sum_{i=1}^{\epsilon}\sum_{a=0}^{\min(i, \epsilon
      -i)}\left(\rho^{\alpha+a+1}\cIlog^{i-a-1} + \rho^{\alpha +
        a}\cIlog^{i+1} \right)\tilde{R}^{\epsilon-i-a} \\
    &\subset \rho^{\alpha}(R + \imath (k - \alpha))^{\epsilon+1} +
    \sum_{i=1}^{\epsilon+1}\sum_{a=0}^{\min(i,\epsilon+1-i)}\rho^{\alpha+a}\cIlog^{i
    - a}\tilde{R}^{\epsilon+1-i-a},
  \end{align*}
  as desired.
\end{proof}

Putting Lemma~\ref{lemma:log-commutators-2} together, we have the following:
\begin{lemma}
  \label{log-commutators3}
  Again suppose that $\alpha$, $\beta$, $\gamma$, $\delta$, and
  $\epsilon$ are natural numbers with $\gamma \geq 1$.  Then
  \begin{align*}
    (R+ \imath k)^{\epsilon}
    \rho^{\alpha}\cIlog^{\beta}\cMdifflog^{\gamma}\badmod^{\delta}
    &\subset \sum_{d=0}^{\min(\delta, \epsilon)}\sum_{a =0}^{\min
      (\beta, \epsilon - d)}\rho^{\alpha + a} \cIlog^{\beta -
      a}\cMdifflog^{\gamma + d}\badmod^{\delta - d}(R +
    \imath(k-\alpha-\beta))^{\epsilon-d-a} \\
    &\quad +\sum_{d=0}^{\min(\delta, \epsilon)}\sum_{i=1}^{\epsilon -
      d}\sum_{a=0}^{\min(\beta +i, \epsilon -
      d-i)}\rho^{\alpha+a}\cIlog^{\beta+i-a}\cMdifflog^{\gamma+d}\badmod^{\delta-d}\tilde{R}^{\epsilon-d-i-a} .
  \end{align*}
  In particular, one has, for $\epsilon \geq \beta + 2$,
  \begin{align*}
    (R + \imath k)^{\epsilon}\cMdifflog^{\gamma} \badmod^{2} &\subset
    \cMdifflog^{\gamma + 2}\badmod^{2}(R + \imath k )^{\epsilon - 2}
    \\ 
    &\quad + \cIlog \cMdifflog^{\gamma + \epsilon - 1}\badmod^{2} \\
    (R + \imath
    k)^{\epsilon}\cIlog^{\beta}\cMdifflog^{\gamma}\badmod^{2} &\subset
    \cIlog^{\beta}\cMdifflog^{\gamma + \beta + 2}\badmod^{2}(R +
    \imath (k-\beta))^{\epsilon - 2-\beta} \\
    &\quad + \cIlog^{\beta +
      1}\cMdifflog^{\gamma + \epsilon - 1}\badmod^{2}
  \end{align*}
\end{lemma}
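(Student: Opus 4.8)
The plan is to deduce the general inclusion directly by iterating the commutation relations of Lemma~\ref{lemma:log-commutators1}, exactly as Lemma~\ref{lemma:rad-factors-through-module} was deduced from Lemma~\ref{lemma:rad-vf-ideal} in the short-range case, and then to read off the two ``in particular'' inclusions by specializing and invoking one structural property of the radial vector field.

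For the general inclusion I would move the $\epsilon$ copies of $R+\imath k$ one at a time to the right of the block $\rho^{\alpha}\cIlog^{\beta}\cMdifflog^{\gamma}\badmod^{\delta}$, commuting each copy successively past $\rho^{\alpha}$, $\cIlog^{\beta}$, $\cMdifflog^{\gamma}$, and $\badmod^{\delta}$ by items (6), (5), (3), (1) of Lemma~\ref{lemma:log-commutators1}, and reordering ideal and module factors using (2) and (4). Each copy either passes cleanly through the whole block --- acquiring a shift of $-\imath$ for every factor of $\rho$ or $\cIlog$ it crosses --- or is spent on one of the five conversions appearing in those relations: $\badmod\to\cMdifflog$ (which trades a $\badmod$-factor for a $\cMdifflog$-factor), $\cIlog\to\rho\CIlog$ (which trades a $\cIlog$-factor for a $\rho$-factor), and $\cMdifflog\to\cIlog\cMdifflog$, $\cIlog\to\cIlog^{2}$, $\rho\CIlog\to\rho\cIlog$ (each of which raises the number of $\cIlog$-factors by one). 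If no $\cIlog$-raising conversion occurs, then every copy that passed through crossed the same total number $\alpha+\beta$ of $\rho$- and $\cIlog$-factors --- the only other conversion present, $\cIlog\to\rho$, preserves this total --- so all such copies arrive shifted to $R+\imath(k-\alpha-\beta)$ and merge into a power; writing $a$ for the number of $\cIlog\to\rho$ trades and $d$ for the number of $\badmod\to\cMdifflog$ trades, this produces the main summand $\rho^{\alpha+a}\cIlog^{\beta-a}\cMdifflog^{\gamma+d}\badmod^{\delta-d}(R+\imath(k-\alpha-\beta))^{\epsilon-d-a}$, with $a\le\beta$, $d\le\delta$ and $a+d\le\epsilon$, i.e., the stated ranges. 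If instead $i\ge1$ $\cIlog$-raising conversions occur, the surviving radial factors need no longer share a common shift and are bookkept as a product $\tilde R^{\epsilon-d-i-a}$, yielding the error summand $\rho^{\alpha+a}\cIlog^{\beta+i-a}\cMdifflog^{\gamma+d}\badmod^{\delta-d}\tilde R^{\epsilon-d-i-a}$. The weight identity recorded in the Remark after Lemma~\ref{lemma:log-commutators-2} is conserved at every step and serves as a consistency check.

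For the two displayed inclusions I would set $\alpha=0$ and $\delta=2$ in the general formula and use the structural fact that $R+\imath k\in\cMdifflog^{1}$, since $R=\rholog D_{\rholog}+\vlog D_{\vlog}$ is a sum of generators of $\cMdifflog$ and $\imath k\in\CIlog$. Consequently any surviving power $(R+\imath(k-\beta))^{j}$, and likewise any $\tilde R^{j}$, lies in $\cMdifflog^{j}$, and after being commuted past $\badmod^{2}$ via item (2) of Lemma~\ref{lemma:log-commutators1} (which gives $\badmod^{2}\cMdifflog^{j}\subset\cMdifflog^{j}\badmod^{2}$) can be absorbed into the $\cMdifflog$ block. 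Combining this with $\rho\in\cIlog$ (so $\rho^{p}\cIlog^{q}\subset\cIlog^{p+q}$) and the monotonicity facts evident from Lemma~\ref{lemma:log-commutators1} --- the $\cMdifflog$-count never drops below $\gamma$ and the $\badmod$-count never rises above $\delta$ --- one checks each summand: a main summand, after absorbing enough surviving radial factors, has explicit power $(R+\imath(k-\beta))^{\epsilon-2-\beta}$ and $\cMdifflog$-count at most $\gamma+\beta+2$ (at most $\gamma+2$ when $\beta=0$); an error summand has $d+e=\epsilon-i-a\le\epsilon-1$, where $e$ is the number of its radial factors, so absorbing $\tilde R^{e}$ leaves $\cMdifflog$-count at most $\gamma+\epsilon-1$, while $\rho^{\alpha+a}\cIlog^{\beta+i-a}$ has total ideal order at least $\beta+i\ge\beta+1$ and thus contributes a factor of $\cIlog^{\beta+1}$ (just $\cIlog$ when $\beta=0$). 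Collecting these yields exactly the two displayed inclusions.

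I expect the main obstacle to be the first part: verifying that the summation ranges in the general inclusion come out precisely as stated, and that every conversion feeds the indices $a$, $d$, $i$ in the asserted way so that no error summand escapes the bounds $\cMdifflog^{\gamma+d}\badmod^{\delta-d}$, $\cIlog^{\beta+i-a}$ with $i\ge1$, and $\tilde R^{\epsilon-d-i-a}$. There is no new idea beyond Lemma~\ref{lemma:log-commutators-2}; the difficulty is purely the combinatorial accounting, which is slightly complicated by the fact that the shifted radial vector fields $R+\imath j$ commute with the module generators only modulo lower-order terms that must themselves be fed back into the iteration.
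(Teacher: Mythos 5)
Your proposal is correct and takes essentially the same route as the paper's (very terse) proof: the general inclusion is exactly the combination of the three statements of Lemma~\ref{lemma:log-commutators-2} (whether one re-runs the iteration of Lemma~\ref{lemma:log-commutators1} over the whole block, as you do, or simply concatenates the three ready-made statements, the conversion bookkeeping is identical), and the two specializations follow, as you say, from $\rho\in\cIlog$ and $\tilde{R}\in\cMdifflog$, which let surplus radial factors be absorbed into the $\cMdifflog$ block after commuting past $\badmod$ via $\badmod\cMdifflog\subset\cMdifflog\badmod$. No gap.
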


\begin{proof}
  The proof of the first statement merely combines the three
  statements of Lemma~\ref{lemma:log-commutators-2}, while the second
  statement follows from the observations that $\rho \in \cIlog$
  and $\tilde{R} \in \cMdifflog$.
\end{proof}

\begin{proof}[Proof of Lemma~\ref{lemma:iterative-step-log}]
  We proceed via induction on $k$.  Lemma~\ref{lemma:Lnormop}
  establishes that $L + 4 D_{\vlog}\rad_{0} \in \badmod^{2} \subset
  \cMdifflog \badmod^{2}$, finishing the $k=0$ case of the lemma.

  We now suppose the lemma is true with $k$ replaced by $k-1$.  For
  convenience, we let $$s(k,j) = 1 + 2(t_{k}- t_{k-1-j})$$ and observe that
  \begin{align*}
    D_{\vlog}\rad_{k} w &= (R+\imath (k-1))^{2k+1}D_{\vlog}\rad_{k-1}w
    \\
    &\in (R+ \imath(k-1))^{2k+1}
    \sum_{j=0}^{k-1}\cIlog^{j}\cMdifflog^{s(k-1,j)}\badmod^{2}\rad_{k-2-j}w
    \\
    &\subset
    \sum_{j=0}^{k-1}\cIlog^{j}\cMdifflog^{s(k-1,j)+j+2}\badmod^{2}(R +
    \imath(k-1-j))^{2k-1-j}\rad_{k-2-j}w \\
    &\quad +
    \sum_{j=0}^{k-1}\cIlog^{j+1}\cMdifflog^{s(k-1,j)+2k}\badmod^{2}\rad_{k-2-j}w ,
  \end{align*}
  where for the second inclusion we applied
  Lemma~\ref{log-commutators3}.  Because $R \in \cMdifflog$, we then
  have
  \begin{align*}
    D_{\vlog}\rad_{k}w &\in
    \sum_{j=0}^{k-1}\cIlog^{j}\cMdifflog^{s(k-1,j)+2j+2}\badmod^{2}\rad_{k-1-j}w
    \\
    &\quad + \sum_{j=1}^{k}\cIlog^{j}\cMdifflog^{s(k-1,j-1)+
      2k}\badmod^{2}\rad_{k-1-j}w.
  \end{align*}
  We finally note that $s(k-1,j) + 2j+2 = 1 + 2 t_{k-1} - 2t_{k-2-j} +
  2j+2 = 1 + 2(t_{k}-t_{k-1-j})$ and $s(k-1,j-1) + 2k = 1 +
  2(t_{k}-t_{k-1-j})$, finishing the proof.
\end{proof}

\appendix \section{The Kerr metric}
\label{appendix:Kerr}

In this appendix, we discuss the Kerr metric near null infinity as an
example of a Lorentzian scattering metric.

The Kerr metric (with our ``mostly-minus'' sign convention) can be written
\begin{align*}
  \left( 1 - \frac{2Mr}{\Sigma}\right)\,dt^{2} +
  &\frac{4Mar\sin^{2}\theta}{\Sigma}\,dt\,d\varphi -
  \frac{\Sigma}{\Delta}\,dr^{2} - \Sigma \,d\theta^{2} - \left( r^{2} + a^{2} +
    \frac{2Ma^{2}r\sin^{2}\theta}{\Sigma}\right)\sin^{2}\theta\,d\varphi^{2}
  , \\
  \Sigma &= r^{2} + a^{2} \cos^{2}\theta, \\
  \Delta &= r^{2} - 2Mr + a^{2}.
\end{align*}
We now introduce the new variables $$\rho = \frac 1t,\ \vv = 2\left(1-\frac rt \right)$$
so that that the cone $r=t$ now becomes $\vv=0.$  We easily compute
\begin{equation}\label{kerr1}
\frac{\Sigma}{\Delta} \sim 1 + 2M \rho + M \rho \vv,
\end{equation}
\begin{equation}\label{kerr2}
\frac{r}{\Sigma} \sim \rho + \frac{\rho \vv}2
\end{equation}
where we use the notation $f \sim g$ if $f-g =O (\rho^2)+O(\rho
\vv^2)$ near $\rho=\vv=0$ and we will write $\err$ for terms that are
$O(\rho^2) + O(\rho \vv^2)$ below.
Thus we may write
\begin{multline}
g= \big( 1-2M (\rho+\rho \vv/2+\err)\big) \frac{d\rho^2}{\rho^4}- 2 M a
r\sin^2\theta (\rho+\rho \vv/2 + \err)\big(\frac{d\rho}{\rho^2}
d\varphi+d\varphi\frac{d\rho}{\rho^2}\big)
\\ -\big(1 + 2M \rho + M \rho \vv\big)
\big(-(1-\vv/2)\frac{d\rho}{\rho^2}-\frac 12
\frac{d\vv}{\rho}\big)^2-\Sigma d\theta^2-(*) d\varphi^2.
\end{multline}
We then compute the coefficient of $(d\rho/\rho^2)^2$ as
$$
g_{\rho\rho} = \vv-4M\rho-\vv^2/4+\err.
$$
Meanwhile, the coefficient of $(d\rho dv+dv d\rho)/\rho^3$ is given by
$$
\frac 12 \big(1-\frac v2 )(1+2M \rho + M\rho v+\err\big)= \frac 12 +O(\rho)+O(v),
$$
while all other cross terms with $d\rho$ are of the form $O(\rho^{-1})
d\rho \, d\bullet,$ with $\bullet=\theta,\varphi,$ or $v.$
Thus, setting $m=4M,$ and changing coordinates to $$v\equiv \vv-\vv^2/4$$
near $\vv=0$ brings the metric to the desired form.

Meanwhile, we continue to compute in the variables $\rho, \vv$ for the
moment.  The dual Kerr metric has the form
\begin{multline}
\frac{1}{\Delta}\big( r^2 +a^2+ \frac{2 M a^2 r}{\Sigma} \sin^2
\theta)\pa_t^2+ \frac{2 M r}{\Sigma}\frac{a}{\Delta}
(\pa_\varphi\pa_t+\pa_t
\pa_\varphi)-\frac{1}{\Delta\sin^2\theta}\big(1-\frac{2M
  r}{\Sigma}\big) \pa_\varphi^2 -\frac{\Delta}{\Sigma}\pa_r^2 -\frac{1}{\Sigma}\pa_\theta^2.
\end{multline}

Changing coordinates from $r,t$ to $\rho,\vv$ gives,
$$
\pa_t \leadsto -\rho^2\pa_\rho+ 2(1-\vv/2) \rho \pa_{\vv},\quad \pa_r
\leadsto -2 \rho\pa_{\vv}.
$$
Thus, using \eqref{kerr1}, \eqref{kerr2}, the $r,t$ block of the metric can be rewritten in coordinates
$\rho,v,\theta,\varphi$ as
\begin{align*}
&(1+O(\rho)) \pa_t^2-(1+O(\rho))
\pa_r^2 \\
&=(1+O(\rho))( -\rho^2\pa_\rho+ 2(1-\vv/2) \rho \pa_{\vv})^2-(1+O(\rho)) (-2
\rho \pa_{\vv})^2.
\end{align*}
In other words, the scattering principal symbol associated to these
terms (with canonical dual variables $\xi,\ggamma$ to
$d\rho/\rho^2,\ d\vv/\rho$)
is
\begin{equation}\label{Kerrscsymbol}
(1+O(\rho)) \xi^2-4 \xi \ggamma (1-\vv/2) + 4((1-\vv/2)^2-1)\ggamma^2+O(\rho)
\end{equation}

Now we change coordinates to the ``correct'' system of $\rho,
v=\vv-\vv^2/4,$ in which the metric assumes the normal form.  We have
$$
\vv = 2(1-\sqrt{1-v}),
$$
hence in particular,
$$(1-\vv/2) = \sqrt{1-v},$$
while the vector fields are transformed by
$$
\pa_{\vv} \leadsto \sqrt{1-v} \pa_v,\quad
\pa_\rho \leadsto \pa_\rho,
$$
so that
$$
\ggamma \leadsto \sqrt{1-v}\gamma,\quad \xi \leadsto \xi.
$$
These changes yield the symbol
$$
\xi^2-4(1-v) \xi \gamma -4 v(1-v) \gamma^2 +O(\rho).
$$
Thus we find that in the notation of
\eqref{eq:inversemetriccomponents}, for the Kerr metric, we may read
off the coefficients of the dual metric in normal form as:
\begin{equation}\label{Kerrconstants}
\omega|_{\rho=v=0}=1,\ \alpha|_{\rho=v=0}=2,\ \beta|_{\rho=v=0}=4.
\end{equation}

\section{Explicit log terms}

In this section we describe how to explicitly compute the leading
order log singularity in the expansion at the radiation field face,
and verify that its coefficient is nonzero for the Kerr metric, whenever the
radiation field does not vanish identically.

Recall that we know a priori that if $\Box u=f$ with $f \in \dCI,$ then
$$
w\equiv \rho^{-(n-2)/2} u,
$$
which solves
$$
\rho^{-(n-2)/2-2} \Box \rho^{(n-2)/2} w=\rho^{-(n-2)/2-2} f\in \dCI
$$
has an expansion at the radiation field front face, i.e., locally in
the variables $s=\vlog/\rholog, \rholog, \ylog$ beginning
\begin{equation}\label{seriesansatz}
w\sim w_0(s,\ylog) + w_1^0(s,\ylog) \rholog+w_1^1(s,\ylog) \rholog\log
\rholog+w_1^2(s,\ylog) \rholog\log^2 \rholog.
\end{equation}
To explicitly find these terms (at least in principle) we recall that
we may write
$$
L=\rho^{-(n-2)/2-2} \Box \rho^{(n-2)/2}=L_0+\badmod^2,
$$
with
$$    L_0 = 4\pd[\vlog]\left( \rholog\pd[\rholog] + \vlog
      \pd[\vlog]\right). $$
We will need to analyze the module term more closely to obtain the
explicit singularity.  

To begin, we return to our original coordinate system $\rho, v, y$ and
note that if we look at the module vector fields $\rho \pa_v, v\pa_v,
\rho \pa_\rho,$ when we change to logified coordinates these become
respectively 
$$
\rholog\pa_\rholog+ \chi m \rholog (1+\log \rholog) \pa_{\vlog},\
(\vlog-\chi m\rholog \log \rholog)(1+\chi' m\rholog \log\rholog) \pa_{\vlog},\
\rholog(1+\chi' m \rholog \log\rholog) \pa_{\vlog}.
$$
We then perform the radiation field blowup $s=\vlog/\rholog$ and note
that the terms we get in this manner are spanned by $$\pa_s, \log\rholog\pa_s,
\rholog \pa_\rholog, \rholog \log \rholog \pa_{\rholog}.$$  Of these
terms, the important one for our purposes is $\log\rholog \pa_s,$ as
it is the only one that can produce a $\log\rholog$ term when
applied to a series of the form \eqref{seriesansatz}.  We now note the
crucial fact that in changing to log coordinates followed by lifting vector fields from $\cM,$ we have
\begin{align*}
\rho\pa_\rho &\leadsto \rholog \pa_{\rholog} + (\chi m-s) \pa_s + \chi
m
\log\rholog \pa_s,\\
v \pa_v &\leadsto (s-\chi m\log\rholog) (1+ \chi' m \rholog \log \rholog)
\pa_s,\\
\rho \pa_v &\leadsto (1+ \chi' m\rholog \log \rholog)\pa_s,
\end{align*}
hence isolating the crucial term, we simply remark that
\begin{equation}\label{vflifts}
\begin{aligned}
\rho\pa_\rho &\leadsto \chi m
\log\rholog \pa_s+\dots,\\
v \pa_v &\leadsto -\chi' m \log \rholog \pa_s+\dots,\\
\rho \pa_v &\leadsto \chi' m\rholog \log \rholog\pa_s+\dots.
\end{aligned}\end{equation}
(In dealing with $\CIlog$ coefficients of such terms, meanwhile, we
note that since every factor $\log\rholog$ also comes with a factor of
$\rholog,$ in analyzing the coefficient of $\log\rholog \pa_s$ in the
lift, it suffices to freeze these coefficients at $\rho=0.$)
We also recall that the operator
$$
4 \pa_v  (\rho \pa_\rho+ v \pa_v)-4 m \rho \pa_v^2
$$
lifts under this transformation to precisely
$$
L_0 = 4 \pa_s \pa_{\rholog}.
$$

Now we return to the form of a general long-range scattering metric.
Following the proof of Lemma~\ref{lemma:LmodM}, we can more precisely
write, using the notation of \eqref{eq:inversemetriccomponents} for
the dual metric components,
$$
L=4 \pa_v  (\rho \pa_\rho+ v \pa_v)-4 m \rho
\pa_v^2+\omega(\rho\pa_\rho)^2 + 2\alpha
\rho\pa_\rho v \pa_v+\beta (v\pa_v)^2+E
$$
where $E$ consists of first order terms in the module, second order
terms vanishing to higher order at $\rho=0,$ and terms
involving $\pa_y.$  Now lifting this expression to the logified,
blown-up space, using \eqref{vflifts}, it becomes
$$
L=4 \pa_s \pa_{\rholog} + m^2 (\omegabar-2\alphabar+\betabar)
\log^2 \rholog \pa_s^2 + E'
$$
where $E'$ consists of terms up to second order in
$\rholog\pa_\rholog,\ \rholog\log \rholog \pa_\rholog,\ \pa_s, \log
\rholog \pa_s$ with log-smooth coefficients, \emph{but containing at most one factor of this last
  vector field}, and $\omegabar,\alphabar,\betabar$ denote the
respective restrictions of these functions to $\rho=v=0.$   Because
the derivative of $\chi$ is supported away from the radial set, $E'$
also includes the error terms from dropping the factors of $\chi$ and
$\chi'$ in the above expression.

Now we apply this expression for $L$ to the series Ansatz
\eqref{seriesansatz}.  Matching the resulting coefficients of $\log^2
\rholog$ yields
$$
4 \pa_s w_1^2 + m^2 (\omegabar-2\alphabar+\betabar)\pa_s^2 w_0=0,
$$
hence, since all the coefficients vanish for $s \to -\infty,$ we may
integrate to find
$$
w_1^2=-\frac{m^2}4 (\omegabar-2\alphabar+\betabar)\pa_s w_0.
$$
For the particular case of the Kerr metric, \eqref{Kerrconstants} now gives
$$
w_1^2=-\frac{m^2}4\pa_s w_0.
$$
The function $w_0$ cannot be constant unless it is zero (again
since it vanishes for $s \ll 0$), so in general, we find that $w_1^2
\neq 0.$  (Note that $\pa_s w_0$ is in fact exactly the Friedlander
radiation field in this context.)

\def\cprime{$'$} \def\cprime{$'$}

\end{document}